\newcommand{\Rl}{\mathbb{R}}
\newcommand{\Cplx}{\mathbb{C}}
\newcommand{\Ntrl}{\mathbb{N}}
\newcommand{\Circ}{\mathbb{T}}
\newcommand{\Bc}{\mathcal{B}}
\newcommand{\Ec}{\mathcal{E}}
\newcommand{\Hc}{\mathcal{H}}
\newcommand{\Ic}{\mathcal{I}}
\newcommand{\Kc}{\mathcal{K}}
\newcommand{\Lc}{\mathcal{L}}
\newcommand{\Sc}{\mathcal{S}}
\newcommand{\Uc}{\mathcal{U}}
\newcommand{\tr}{\mathrm{tr}}
\newcommand{\Tr}{\mathrm{Tr}}
\newcommand{\re}{\mathrm{re}}
\newcommand{\Res}{\mathrm{Res}}
\newcommand{\tf}{\mathfrak{t}}
\newcommand{\Tb}{\mathbb{T}}
\newcommand{\res}{\mathrm{res}}
\newcommand{\Psib}{\mathbb{\Psi}}
\newcommand{\Sigmab}{\mathbb{\Sigma}}
\newcommand{\Pb}{\mathbb{P}}
\newcommand{\Qb}{\mathbb{Q}}
\newcommand{\Ab}{\mathbb{A}}
\newcommand{\Bb}{\mathbb{B}}
\newcommand{\dhom}{Q}
\newcommand{\kbb}{\mathbb{k}}
\newcommand{\fbb}{\mathbb{f}}
\newcommand{\evz}{\mathrm{tr}}
\newcommand{\dens}{\Omega}
\newcommand{\Beta}{\mathrm{B}}
\newcommand{\Gexp}{\mathbb{E}\!\mathrm{xp}}
\newcommand{\phg}{{\mathrm{phg}}}
\newcommand{\Int}{{\mathrm{Int}}}
\def\XXint#1#2#3{{\setbox0=\hbox{$#1{#2#3}{\int}$ }
\vcenter{\hbox{$#2#3$ }}\kern-.6\wd0}}
\numberwithin{equation}{section}
\newtheorem{theorem}{Theorem}[section]
\newtheorem{proposition}[theorem]{Proposition}
\newtheorem{corollary}[theorem]{Corollary}
\newtheorem{definition}[theorem]{Definition}
\newtheorem{lemma}[theorem]{Lemma}
\newtheorem{conjecture}{Conjecture}
\theoremstyle{remark}
\newtheorem{remark}[theorem]{Remark}
\title{Connes' trace theorem and the log-polyhomogeneous calculus for Carnot manifolds}
\date{\today}
\author{E.~McDonald}
\address{Universit\"at Bonn, Germany}
\email{ed@eamcdonald.xyz}
\begin{document}
\maketitle{}

\begin{abstract}
    The Wodzicki residue is the unique trace on the algebra of classical pseudodifferential operators on a closed manifold, and Connes in 1988 proved that it coincides with the Dixmier trace. There are also ``higher" residues, defined on the set of operators whose symbols that can be expressed as polynomials of a logarithm, introduced by Lesch, and these are related to other singular traces. A Carnot manifold is a manifold $M$ whose tangent bundle $TM$ is equipped with a nested family $H$ of sub-bundles $H_0\leq H_1 \leq \cdots \leq TM$ which defines a filtration of the Lie algebra of vector fields on $M.$ Associated to a Carnot manifold is a pseudodifferential calculus $\Psi_H(M),$ which measures sections of $H_k$ as having order $k.$
    Recently, Dave-Haller and Couchet-Yuncken proposed definitions of a residue functional on the algebra of pseudodifferential operators adapted to a Carnot manifold. We prove that Connes' trace theorem holds in this setting. We also introduce an analogy of Lesch's log-polyhomogeneous calculus for Carnot manifolds, define the corresponding higher residues, and give their spectral description in terms of singular traces.
\end{abstract}

\section{Introduction}
This paper concerns several issues around the pseudodifferential calculus on a Carnot manifold $(M,H),$ as defined by van Erp and Yuncken \cite{vanErpYuncken2019}. Our main goal is to show that Connes' trace formula holds in their setting (Theorem \ref{main_theorem} below). Our secondary goal is to develop a log-polyhomogeneous extension of their calculus, including a corresponding Connes' trace theorem for this extension.

The set of classical pseudodifferential operators is related to polyhomogeneous functions, and one of the key insights of \cite{vanErpYuncken2019} was that polyhomogeneous functions can be characterised as the restrictions of approximately homogeneous functions on a higher-dimensional space. To illustrate, let $\{\delta_t\}_{t>0}$ be a group of dilations on $\Rl^N.$ That is, $\delta_t(e_j) = t^{w_j}e_j$ for $j=1,\ldots,N,$ where $e_j$ is the $j$th basis vector and $w_1,\ldots,w_N$ are positive numbers. Say that $f \in C^\infty(\Rl^N)$ is \emph{approximately homogeneous} of order $m\in \Cplx$ if for all $t>0$ we have
\[
    f-t^{-m}f\circ \delta_t \in \Sc(\Rl^N)
\]
where $\Sc(\Rl^N)$ is the Schwartz space. According to a theorem of Taylor \cite[Proposition 2.2]{Taylor1984}, the following are equivalent:
\begin{itemize}
    \item{} $f$ is approximately homogeneous of order $m,$
    \item{} $(Z-m)f \in \Sc(\Rl^N),$ where $Zf := \frac{d}{dt}f
    |_{t=1}\circ \delta_t$ is vector field generator of $\delta,$ i.e.
    \[
        Z = \sum_{j=1}^N w_jx_j\frac{\partial}{\partial x_j},
    \]
    \item{} $f$ has the form
    \[
        f = h+g
    \]
    where $g\in \Sc(\Rl^N),$ and $h\in C^\infty(\Rl^N)$ is homogeneous away from zero in the sense that $h(\delta_t\xi)=t^mh(\xi)$ for all $t>1$ and all sufficiently large $\xi$
    \item{} and (when $\Re(m)<0$), there exists $a\in \Sc(\Rl^N)$ such that
    \[
        f = \int_0^1 t^{-m-1}a\circ \delta_t\,dt.
    \]
\end{itemize}
Recall that a distribution $f$ is homogeneous of order $m$ if and only if $f$ satisfies the Euler equation $(Z-m)f=0.$ The above equivalences tell us a function $f$ is approximately homogeneous if and only if it \emph{approximately} satisfies the Euler equation, in the sense that $(Z-m)f\in \Sc(\mathbb{R}^N).$ The function $a$ in the fourth item is simply $a = (Z-m)f.$

On the other hand we say that a smooth function $f$ is \emph{polyhomogeneous} of order $m$ if there exist homogeneous functions $\{f_j\}_{j=0}^\infty,$ where $f_j$ has order $m-j,$ such that
\[
    f \sim \sum_{j=0}^\infty f_j
\]
in the sense that $f(\xi)-\sum_{j=0}^n f_j(\xi)$ decays faster than $(1+|\xi|)^{\Re(m)-n}$ as $|\xi|\to\infty.$ It was Debord and Skandalis who observed that the following are equivalent \cite[Theorem 3.7]{DebordSkandalis2014}:
\begin{itemize}
    \item{} $f$ is polyhomogeneous of order $m$
    \item{} There exists a smooth function $\widetilde{f}$ on $\Rl^N\times \Rl$ which is approximately homogeneous with respect to the dilation $\widetilde{\delta}_t(\xi,h) = (\delta_t\xi,th)$ such that
    \[
        f(\xi) = \widetilde{f}(\xi,1),\quad \xi\in \Rl^N
    \]
    \item{} and (when $\Re(m)<0$) there exists $a\in \Sc(\Rl^N\times \Rl)$ such that
    \[
        f(\xi) = \int_0^1 t^{-m-1}a(\delta_t\xi,t)\,dt.
    \]
\end{itemize}
Note that the function in the third item can recovered from $f$ by $a = (\widetilde{Z}-m)\widetilde{f}$ where $\widetilde{Z}$ is the generator of $\widetilde{\delta}_t.$
Let $S^m_{\phg}(\Rl^N)$ be the set of polyhomogeneous functions of order $m$ on $\Rl^N,$ and let $\dhom$ be the homogeneous dimension, defined as $\dhom = \frac{\log(\det(\delta_t))}{\log(t)}=w_1+\cdots+w_N.$ Two important functionals on $S^{m}_{\phg}(\Rl^N)$ are the \emph{canonical integral}
\[
    \Int:S^{m}_{\phg}(\Rl^N)\to \Cplx,\quad m\notin -Q+\{0,1,2,3,\ldots\}
\]
and the \emph{residue}
\[
    \Res:S^{-\dhom}_{\phg}(\Rl^N)\to \Cplx.
\]
The canonical integral extends the ordinary (Lebesgue) integral, which is initially defined for operators of order $m$ with $\Re(m)<-Q$ and extended to all those operators whose order does not belong to $-\dhom+\Ntrl$ by a Hadamard finite part regularisation. The residue of $f \in S^{-\dhom}_{\phg}(\Rl^N)$ can be given as the integral over a ``unit sphere" of the order $-\dhom$ homogeneous part $f_0.$
Both $\Int(f)$ and $\Res(f)$ have elementary descriptions in terms of an approximately homogeneous extension $\widetilde{f}.$ Let $\widetilde{f}$ be an order $m$ approximately homogeneous function on $\Rl^{N+1}$ with $\widetilde{f}(\xi,1)=f(\xi),$ and denote
\[
    a(\xi,h) := (\widetilde{Z}-m)\widetilde{f}(\xi,h) = \frac{d}{dt}\big|_{t=1}t^{-m}\widetilde{f}(\delta_t\xi.th),\quad (\xi,h)\in \Rl^{N}\times \Rl.
\]
When the order $m$ of $f$ is $-Q,$ the residue is given by the formula
\begin{equation}\label{residue_commutative_formula}
    \Res(f) = \int_{\Rl^N} a(\xi,0)\,d\xi
\end{equation}
while if $m\notin -Q+\Ntrl,$ then the canonical integral is
\begin{align*}
    \Int(f) &= \sum_{j=0}^{n-1} \frac{1}{j!}\frac{1}{j-m-\dhom}\int_{\Rl^N} a^{(j)}(\xi,0)\,d\xi\\
            &\quad + \frac{1}{(n-1)!}\int_0^1\int_{\Rl^N} a^{(n)}(\xi,u)\,u^{n-m-\dhom-1}\Beta(1-u;n,m+\dhom-n+1)\,d\xi du
\end{align*}
where $n$ is suitably large so that the integral converges, and $a^{(j)}$ denotes the $j$th derivative of $a$ in its second argument. Here,
\[
    \Beta(x;\alpha,\beta) = \int_0^x t^{\alpha-1}(1-t)^{\beta-1}\,dt
\]
is the incomplete beta function.

A classical pseudodifferential operator on $\Rl^d$ of order $m\in \Cplx$ is defined by a symbol function $\sigma$ on $\Rl^d\times \Rl^d$ which is polyhomogeneous in the second variable. That is, $\sigma$ admits an asymptotic expansion
\[
    \sigma(x,\xi) \sim \sum_{k=0}^\infty \sigma_k(x,\xi)
\]
where $\sigma_k(x,\xi)$ is approximately homogeneous of order $m-k$ in $\xi$ with respect to the standard isotropic dilations on $\Rl^d.$ The Schwartz kernel of a pseudodifferential operator on $\Rl^d$ with symbol $\sigma$ is given by the Fourier transform
\[
    K(x,y) = (2\pi)^{-d}\int_{\Rl^d}\exp(i(x-y,\xi))\sigma(x,\xi)\,d\xi.
\]
This is an oscillatory integral, which should be understood as defining a distribution in $(x,y).$ The \emph{canonical trace} and \emph{noncommutative residue} of a pseudodifferential operator $T$ with symbol $\sigma$ are given by the canonical integral and the residue of $\sigma,$ i.e.
\[
    \mathrm{TR}(T) := (2\pi)^{-d}\int_{\Rl^d} \Int(\sigma(x,\cdot))\,dx,\quad \Res(T) := (2\pi)^{-d}\int_{\Rl^d} \Res(\sigma(x,\cdot))\,dx.
\]
This is meaningful provided that the symbol $\sigma$ has the correct order ($m\notin -d+\Ntrl$ for the canonical trace, and $m=-d$ for the noncommutative residue) and also also assuming that $\sigma$ decays suitably in $x$ so that the integrals over $x$ converges.

Van Erp and Yuncken used the above characterisations of polyhomogeneous functions in terms of almost-homogeneous extensions to characterise the Schwartz kernels of classical pseudodifferential operators on a manifold in terms of approximately homogeneous distributions on the tangent groupoid.
It follows from the above stated results that $K(x,y)$ is the Schwartz kernel of a classical pseudodifferential operator of order $m$ if and only if there exists a distribution $\widetilde{K}$ on $\Rl^d\times \Rl^d\times \Rl,$ smooth in the first and third variables, such that $K(x,y) = \widetilde{K}(x,y,1)$ and
\[
    \widetilde{K}(x,x+z,h)-t^{-m-d}\widetilde{K}(x,x+t^{-1}z,th)
\]
is smooth in $(x,z,h)$ for every $t>0.$ Van Erp and Yuncken phrased this in terms of the tangent groupoid: if $\Rl^d$ is replaced by a manifold $M,$ then the Schwartz kernel $K$ of an operator is a distribution on $M\times M$ and the kernels of pseudodifferential operators are those distributions which admit an approximately homogeneous extension $\widetilde{K}$ to the tangent groupoid $\Tb M.$
An advantage of their theory is that it readily generalises to define pseudodifferential operators associated to a nontrivial filtration of the tangent bundle, also called a Carnot structure.

Couchet--Yuncken \cite{CouchetYuncken2024} and Mohsen \cite{Mohsen2024residue} gave tangent groupoid descriptions of the noncommutative residue of a pseudodifferential operator. In the case of Couchet--Yuncken, the noncommutative residue was extended to Carnot manifolds. They proved that their residue was equivalent to one earlier defined by Dave--Haller \cite{DaveHaller2020}, and conjectured that there should be a form of Connes' trace theorem for that setting (we give more details on the trace theorem in Subsection \ref{ctt_subsection} below).

The main purpose of this paper is to explain why Connes' trace formula is indeed true for calculus associated to a Carnot manifold, and why it is that formulas like \eqref{residue_commutative_formula} make the proof almost trivial.

The second purpose of this paper concerns the characterisation of log-polyhomogeneous pseudodifferential operators in terms of extension of kernels to the tangent groupoid. A smooth function $f$ on a $\Rl^N$ equipped with dilations $\{\delta_t\}_{t>0}$ is said to be $k$-log-polyhomogeneous of order $m\in \Cplx$ if $f$ admits an asymptotic expansion
\[
    f(\xi) \sim \sum_{\ell=0}^{k} \sum_{j=0}^{\infty} \log(|\xi|)^{\ell}f_{j,\ell}(\xi)
\]
where $f_{j,\ell}$ is homogeneous of order $m-j$ with respect to $\delta,$ and $|\cdot|$ is some norm on $\Rl^N$ which satisfies $|\delta_t\xi|=t|\xi|.$ This definition is due to Lesch \cite{Lesch1999}.
It is not particularly difficult to prove, but nonetheless apparently not previously noticed, that $k$-log-polyhomogeneous functions admit a description similar to that of Debord and Skandalis for polyhomogeneous functions. The following are equivalent:
\begin{itemize}
    \item{} $f$ is $k$-log-polyhomogeneous,
    \item{} There exists a smooth function $\widetilde{f}$ on $V\times \Rl$ satisfying $(\widetilde{Z}-m)^{k+1}\widetilde{f}\in \Sc(V\times \Rl)$ and $f(\xi) = \widetilde{f}(\xi,1),$
    \item{} (when $\Re(m)<0$) there exists $a \in \Sc(V\times \Rl)$ such that
    \[
        f(\xi) = \frac{1}{k!}\int_0^1 (-\log t)^{k}t^{-m-1}a(\delta_t\xi,t)\,dt.
    \]
\end{itemize}
We prove this in Appendix \ref{characterisation_appendix}. Accordingly, the kernels of $k$-log-polyhomogeneous pseudodifferential operators on a manifold $M$ can be characterised by their extensions to the tangent groupoid $\mathbb{T}M.$ By analogy we can define $k$-log-polyhomogeneous pseudodifferential operators associated to a Carnot structure. Lesch defined a canonical integral and a $k$th order residue for $k$-log-polyhomogeneous functions, and it turns out that these admit descriptions in terms of the function $a$ similar to those given above. Log-polyhomogeneous pseudodifferential operators are are defined as pseudodifferential operators with symbol function $\sigma(x,\xi)$ that are log-polyhomogeneous in $\xi.$ Analogous to van Erp and Yuncken's characterisation of classical pseudodifferential operators by extension to the tangent groupoid, we can characterise log-polyhomogeneous pseudodifferential operators in the same way. Hence we propose a version of $k$-log-polyhomogeneous pseudodifferential operators on a Carnot manifold. There is an obvious choice of canonical trace, and $k$th order residue, for these operators. It turns out that traces of holomorphic families of $k$-log-polyhomogeneous pseudodifferential operators have meromorphic continuations with poles of order at most $k+1,$ and this has consequences for the distribution of eigenvalues of log-polyhomogeneous pseudodifferential operators.

\section{Background material}
\subsection{Carnot manifolds}\label{carnot_intro_section}
There is some variation in the terminology concerning Carnot manifolds, which have also been called filtered manifolds \cite{vanErpYuncken2017,vanErpYuncken2019}. The situation here is the same as that called the ``group germ case" by Goodman \cite{Goodman-lnm-1976}.

Let $M$ be a manifold. For a vector bundle $E\to M,$ we denote $C^\infty(M,E)$ for the space of smooth sections of $E.$ In this note we will adapt the following terminology: a Lie filtration of $TM$ is a nested sequence
$H = \{H^j\}_{j=0}^\infty$ of sub-bundles of the tangent bundle $TM$ with depth $N\geq 1$
\[
    0 = H^0 \subset H^1 \subset \cdots \subset H^N = TM
\]
satisfying the following property: if $X\in C^\infty(M, H^j)$ and $Y \in C^\infty(M,H^k),$ then $[X,Y] \in C^\infty(M, H^{j+k}),$ where $H^{j}=TM$ when $j>N.$ The {trivial filtration} has $N=1.$ The pair $(M,H)$ is called a {Carnot manifold}. Important special cases the trivial filtration, which has $N=1$ and Heisenberg manifolds.

The associated graded bundle $\tf_HM$ is defined as the direct sum $\bigoplus_{k=1}^N \tf_HM^k,$ where
\[
    \tf_HM^k := H^k/H^{k-1},\quad k\geq 1.
\]
It can be easily checked with the Leibniz rule that Lie bracket of vector fields descends to a Lie bracket on the fibres of $\tf_HM,$ making each fibre into a graded nilpotent Lie algebra of dimension $\mathrm{dim}(M).$ The fibrewise exponential of $\tf_HM$ is a bundle of nilpotent Lie groups called the osculating groupoid, denoted $T_HM.$ The fibres of $T_HM$ are called the osculating groups of $(M,H).$ The homogeneous dimension of $(M,H)$ is defined as
\[
    \dhom := \sum_{k=1}^N k\cdot\mathrm{rank}(\tf_HM^k) = \sum_{k=1}^N \mathrm{rank}(H^k).
\]
The {dilation} action of $\Rl^{\times}$ on $\tf_HM$ defined as
\[
    \delta_t = \bigoplus_{k=1}^N t^{k},\quad t > 0
\]
where the $k$th summand acts on $\tf_HM^k.$ The determinant of $\delta_t$ as an $\mathrm{End}(\tf_HM)$-valued function is $t^{\dhom}.$

A \emph{splitting} is a smooth bundle isomorphism
\[
    \psi:\tf_HM\to TM
\]
such that the restriction of $\psi$ to the summand $\tf_HM^k$ is right-inverse to the quotient map $H^k\to \tf_HM^k.$ We can always choose a splitting $\psi,$ and if $\psi_1,\psi_2$ are two splittings, then the composition $\psi_2^{-1}\psi_1:\tf_HM\to TM$ is upper diagonal in the sense that it maps $\tf_{H}M^k$
into $\bigoplus_{j\leq k} \tf_HM^j.$
A splitting $\psi$ induces a linear isomorphism
\[
    \Uc(\tf_HM)\to \mathrm{DO}(M)
\]
where $\Uc(\tf_HM)$ is the universal enveloping algebra of the Lie algebra of sections of $\tf_HM,$ and $\mathrm{DO}(M)$ is the ring of differential operators on $M.$ Denote the differential operators of degree $m\geq 1$ (with respect to the grading on $\tf_HM$) by $\mathrm{DO}_H^m(M).$
This gives $\mathrm{DO}(M)$ the structure of a filtered algebra. That is to say,
\[
    \mathrm{DO}(M) = \bigcup_{m\geq 1} \mathrm{DO}_H^m(M),\quad \mathrm{DO}^m_H(M)\cdot \mathrm{DO}^n_H(M) \subseteq \mathrm{DO}^{m+n}_{H}(M),\quad m,n\geq 1.
\]
We can identify the quotient space $\mathrm{DO}^m_H(M)/\mathrm{DO}^{m-1}_H(M)$ with the space $\Uc_m(\tf_HM)$ of elements of $\Uc(\tf_HM)$ that are homogeneous of degree $m$ with respect to $\delta.$ This can be thought of as an exact sequence
\[
    0\rightarrow \mathrm{DO}_H^{m-1}(M)\to \mathrm{DO}^{m}_H(M)\to \Uc_m(\tf_HM)\to 0.
\]
Many authors have considered the problem of extending $\mathrm{DO}(M)$ to a filtered algebra
\[
    \Psi_H(M) =\bigcup_{m\in \Rl} \Psi^m_H(M),\quad \mathrm{DO}_H^j(M)\subset \Psi^j_H(M),\; j=0,1,2,\ldots
\]
satisfying certain desirable properties. In particular, $\Psi_H(M)$ should contain parametrices for maximally subelliptic differential operators.

This is a well-studied problem and several constructions of an appropriate calculus have been proposed. See for example Melin \cite{Melin-lie-filtrations-1982}, Goodman \cite{Goodman-lnm-1976} and Street \cite{Street-singular-integrals-2014}. We will use the van Erp--Yuncken construction \cite{vanErpYuncken2019} based on the $H$-tangent groupoid. This can be identified with the polyhomogeneous subalgebra of the operators defined by Fermanian-Kammerer, Fischer and Flynn \cite[Section 9.7]{FFF2024arxiv}. We will review the definition and some of the properties of the van Erp--Yuncken calculus in Section \ref{vEY_section} below.

\subsection{Traces on $\Lc_{1,\infty}$}
Here we include some preliminary information about traces. For further details, see \cite{LordSukochevZanin2021}.
Let $\Hc$ be a Hilbert space. Denote by $\Bc(\Hc)$ and $\Kc(\Hc)$ the algebras of bounded and compact linear operators on $\Hc$ respectively. For $T\in \Kc(\Hc),$ the $n$th singular number of $T,$ denoted $\mu(n,T),$ is defined as
\[
    \mu(n,T) = \inf\{\|T-R\|_{\infty}\;:\;\mathrm{rank}(R)\leq n\},\quad n\geq 0.
\]
where $\|\cdot\|_{\infty}$ is the operator norm. Equivalently, $\mu(n,T)$ is the $n$th largest eigenvalue of $|T|,$ starting with $n=0,$ counting multiplicities. The weak trace class $\Lc_{1,\infty}(\Hc),$ for brevity denoted $\Lc_{1,\infty},$ is the ideal of compact operators $T$ such that
\[
    \|T\|_{1,\infty} := \sup_{n\geq 0} (n+1)\mu(n,T)<\infty.
\]
This should be contrasted with the trace class $\Lc_1,$ which is the ideal of $T\in \Kc(\Hc)$ such that
\[
    \|T\|_1 := \sum_{n=0}^\infty \mu(n,T) < \infty.
\]
The trace ideal $\Lc_1$ is the natural domain of the operator trace $\Tr,$ defined on a positive operator $T\in \Lc_1$ by
\[
    \Tr(T) = \sum_{n=0}^\infty \mu(n,T).
\]
The operator trace is additive on positive operators, and extends by linearity to a continuous linear functional on $\Lc_1.$

In general, a trace on an ideal of compact operators is a unitarily invariant funcitonal. The weak trace class admits nontrivial traces, i.e. there are nonvanishing linear functionals $\varphi:\Lc_{1,\infty}\to \Cplx$ satisfying
\[
    \varphi(AB)=\varphi(BA),\quad A\in \Lc_{1,\infty}(\Hc),\; B \in \Bc(\Hc).
\]
A trace $\varphi$ on $\Lc_{1,\infty}$ is called {normalised} if
\[
    \varphi(\mathrm{diag}(\{(n+1)^{-1}\}_{n=0}^\infty)) = 1.
\]
That is, $\varphi(A)=1$ for some (and hence every) positive operator $A$ whose singular value sequence is $\mu(n,A) = (n+1)^{-1}.$

The best known normalised traces on $\Lc_{1,\infty}$ are the {Dixmier traces}, defined in terms of an extended limit. An extended limit $\omega$ is a positive linear functional on $\ell_{\infty}(\Ntrl)$ that coincides with the limit on convergent sequences. The Dixmier trace $\Tr_{\omega}$ corresponding to $\omega$ is defined on a positive operator $T\in \Lc_{1,\infty}$ by
\[
    \Tr_{\omega}(T) = \omega\left(\left\{\frac{1}{\log(N+2)}\sum_{n=0}^N \mu(n,T)\right\}_{N=0}^\infty\right).
\]
The Dixmier trace $\Tr_{\omega}$ is additive on the cone of positive operators and extends by linearity to a trace on $\Lc_{1,\infty}.$

A trace $\varphi$ is called continuous if there exists $C>0$ such that $|\varphi(T)|\leq C\|T\|_{1,\infty}.$ Dixmier traces are continuous, but not all traces on $\Lc_{1,\infty}$ are continuous.

\subsection{The Wodzicki residue and Connes' trace theorem}\label{ctt_subsection}
Let $T$ be a classical pseudodifferential operator of order $-d$ on a compact $d$-dimensional manifold $M.$ The Wodzicki residue $\mathrm{Res}_W(T)$ of $T$ can be defined in several equivalent ways, one of which is the following procedure: first, cover $M$ by charts $\{(U_i,x_i)\}_{i=1}^n$ and let $\sigma_{-d}(T)(x_i,\xi_i)$ be the order $-d$ homogeneous component of the symbol of $T$ in the chart $(U_i,x_i).$ This is a smooth function on the bundle $T^*U_i\setminus \{0\}.$ Multiplying $\sigma_{-d}(T)$
by the Liouville measure $dx_id\xi_i$ on $T^*U_i$ defines a density that is homogeneous of order $0$ with respect to dilations in the fibres of $T^*U_i.$ The cosphere bundle $S^*U_i = (T^*U_i\setminus \{0\})/\Rl_+$ is a sphere bundle over $U_i,$ and the product $\sigma_{-d}(T)(x_i,\xi_i)dx_id\xi_i$ can be proved to descend to a density on $S^*U_i.$ Let $\{\phi_i\}_{i=1}^n$ be a partition of unity subordinate to the cover $\{U_i\}_{i=1}^n.$ We define
\[
    \mathrm{Res}_W(T) := (2\pi)^{-d}\sum_{i=1}^n \int_{S^*U_i}\phi_i(x_i)\sigma_{-d}(T)(x_i,\xi_i)\,dx_id\xi_i.
\]
The Wodzicki residue of $T\in \Psi^{-d}(M)$ depends only on $T$ and is independent of all of the other choices made in its definition. The Wodzicki residue is also called the noncommutative residue.

Wodzicki proved that $\Res_W$ is a trace on the algebra of classical pseudodifferential operators of integer order (i.e., it vanishes on commutators), and moreover it is the essentially unique trace \cite{WodzickiLNM1987}. This suggests an interpretation of $\Res_W(T)$ as a spectral invariant, and this is what Connes' trace theorem achieves.

We consider pseudodifferential operators on $M$ as being operators
\[
    T:C^\infty(M)\to C^\infty(M).
\]
If we select a (nowhere vanishing) density $\nu$ on $M,$ then a pseudodifferential operator $T$ can be identified with a (potentially unbounded) operator on the Hilbert space $L_2(M,\nu).$
It is also possible to define $T$ as an operator on half-densities and therefore to avoid the choice of a density, but we will maintain the convention that pseudodifferential operators act on functions.

If $T$ has order $-d$ on where $d$ is the dimension of $M,$ then $T\in \Lc_{1,\infty}(L_2(M,\nu)).$ Connes' trace theorem computes the value of a normalised trace on $T.$
\begin{theorem}[Connes' trace theorem]\label{ctt_original}
    Let $M$ be a compact $d$-dimensional manifold and let $T\in \Psi^{-d}(M)$ be a classical pseudodifferential operator of order $-d.$ Let $\nu$ be a density on $M.$
    Then $T$ can be identified with an element of $\Lc_{1,\infty}(L_2(M,\nu))$ and for any normalised trace $\varphi$ on $\Lc_{1,\infty}(L_2(M,\nu))$ we have
    \[
        \varphi(T) = \frac{1}{d}\mathrm{Res}_W(T).
    \]
\end{theorem}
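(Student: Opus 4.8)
\emph{Proof sketch.} The plan is to reduce, using only the $\Cplx$-linearity of the two sides, to a self-adjoint operator whose principal symbol is everywhere positive --- hence to an \emph{elliptic} operator --- and then to extract both $\varphi(T)$ and $\tfrac1d\mathrm{Res}_W(T)$ from a power-saving Weyl asymptotic. Since $\varphi$ and $\mathrm{Res}_W$ are linear, writing $T=\tfrac12(T+T^*)-\tfrac i2\bigl(i(T-T^*)\bigr)$ as a complex combination of two self-adjoint operators of order $-d$ reduces the problem to the case $T=T^*$. Fix a positive elliptic self-adjoint $\Lambda\in\Psi^1(M)$ on $L_2(M,\nu)$ (for instance $\Lambda=(1+\Delta_{g,\nu})^{1/2}$ for the Laplacian of some Riemannian metric); then for $C>0$ large the operator $V:=T+C\Lambda^{-d}\in\Psi^{-d}(M)$ is self-adjoint with $\sigma_{-d}(V)=\sigma_{-d}(T)+C\,\sigma_{-d}(\Lambda^{-d})>0$ on $T^*M\setminus 0$. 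Because $\varphi(T)=\varphi(V)-C\varphi(\Lambda^{-d})$ and $\mathrm{Res}_W(T)=\mathrm{Res}_W(V)-C\mathrm{Res}_W(\Lambda^{-d})$, it is enough to establish the identity for an arbitrary self-adjoint $V\in\Psi^{-d}(M)$ with everywhere positive principal symbol --- such a $V$ is elliptic of order $-d$, and this case subsumes both $V$ and $\Lambda^{-d}$.

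Fix such a $V$. By the sharp Gårding inequality, $V\ge\delta\Lambda^{-d}-C'\Lambda^{-d-1}$ for some $\delta>0$; the operator on the right has only finitely many negative eigenvalues, so by min-max the same is true of $V$, and we write $V=V_+-V_-$ with $V_\pm\ge0$ and $V_-$ of finite rank. Every trace on $\Lc_{1,\infty}$ annihilates the trace-class ideal $\Lc_1$ --- which is contained in the commutator subspace of $\Lc_{1,\infty}$ \cite{LordSukochevZanin2021} --- so $\varphi(V)=\varphi(V_+)$. Ellipticity of $V$ makes Hörmander's sharp Weyl law available, and it gives, as $\lambda\downarrow0$,
\[
    \#\{\,n\ge 0:\mu(n,V_+)\ge\lambda\,\}=c\,\lambda^{-1}+O(\lambda^{-1+1/d}),\qquad c=(2\pi)^{-d}\cdot\Vol\{(x,\xi):\sigma_{-d}(V)(x,\xi)\ge1\},
\]
where $\Vol$ denotes Liouville volume on $T^*M$. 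Passing to polar coordinates in the cotangent fibres, the radial integral $\int_0^R r^{d-1}\,dr=R^d/d$ contributes exactly the factor $\tfrac1d$, so that $c=\tfrac1d(2\pi)^{-d}\int_{S^*M}\sigma_{-d}(V)\,dx\,d\xi=\tfrac1d\mathrm{Res}_W(V)$; this is the true source of the constant $\tfrac1d$ in the theorem. Inverting the counting asymptotics yields $\mu(n,V_+)=\tfrac{c}{n+1}+O(n^{-1-1/d})$.

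Now choose a positive operator $B$ with singular value sequence $\mu(n,B)=(n+1)^{-1}$ that commutes with $V_+$ and is diagonal in one of its eigenbases. In that common basis $V_+-cB$ has the absolutely summable eigenvalue sequence $\mu(n,V_+)-\tfrac{c}{n+1}=O(n^{-1-1/d})$, hence $V_+-cB\in\Lc_1$. As $\varphi$ is normalised, $\varphi(B)=1$, and as $\varphi$ annihilates $\Lc_1$,
\[
    \varphi(V)=\varphi(V_+)=c\,\varphi(B)+\varphi(V_+-cB)=c=\tfrac1d\mathrm{Res}_W(V).
\]
Applying this both to $V$ and to $\Lambda^{-d}$ and using the reduction of the first paragraph, $\varphi(T)=\varphi(V)-C\varphi(\Lambda^{-d})=\tfrac1d\mathrm{Res}_W(V)-\tfrac Cd\mathrm{Res}_W(\Lambda^{-d})=\tfrac1d\mathrm{Res}_W(T)$, which proves the theorem.

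The step I expect to be the genuine obstacle is the \emph{bounded} error term $\mu(n,V_+)=\tfrac{c}{n+1}+O(n^{-1-1/d})$: a soft Tauberian argument delivers only $\tfrac cn+o(\tfrac1n)$, which suffices for the agreement of all Dixmier traces (the original form of Connes' theorem) but not of all traces on $\Lc_{1,\infty}$. The power-saving Weyl remainder is available precisely because the reduction $V=T+C\Lambda^{-d}$ has rendered $V$ elliptic. For a general $T\in\Psi^{-d}(M)$, whose principal symbol may change sign and for which no such remainder is available, one would instead have to invoke Wodzicki's theorem identifying $\mathrm{Res}_W$ as the unique trace on the algebra of classical pseudodifferential operators, together with the more delicate fact that $\varphi$ factors through the corresponding quotient.
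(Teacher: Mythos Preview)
Your argument is correct. The paper does not prove Theorem~\ref{ctt_original} directly---it is quoted as background with references to \cite{Connes-Action} and \cite{KLPS}---but its proof of the Carnot generalisation (Theorem~\ref{main_theorem}) specialises, in the trivially filtered case, to a proof that takes a genuinely different route from yours. You reduce by linearity to a self-adjoint elliptic $V$ with positive principal symbol, invoke H\"ormander's sharp Weyl remainder to obtain $\mu(n,V_+)=c\,n^{-1}+O(n^{-1-1/d})$, and conclude $V_+-cB\in\Lc_1$, which forces $\varphi(V_+)=c$ since every trace on $\Lc_{1,\infty}$ annihilates $\Lc_1$. The paper instead goes through the \emph{zeta-function criterion}: it shows, via Dave--Haller's theory of complex powers, that $s\mapsto\Tr(TP^{-s/m})$ continues meromorphically with a simple pole at $s=0$ (Corollaries~\ref{zeta_function_analyticity} and~\ref{microlocal_form_of_trace_theorem}), and then applies \cite[Theorem~9.1.5(a)]{LordSukochevZanin2021} to convert this analytic information directly into $\varphi(T)=c$ for every normalised trace. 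Your approach is more elementary in the classical setting and makes the role of the power-saving spectral remainder completely explicit---indeed Subsection~\ref{weyl_subsection} remarks that precisely this asymptotic (with $\delta=1/d$) is available in the trivially filtered case. The paper's zeta-function method, by contrast, is what enables the extension to Carnot manifolds, where sharp Weyl laws for operators of order $-\dhom$ remain open (Conjecture~\ref{big_conjecture}) and your reduction to an elliptic operator with a known power-saving remainder is not available.
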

Note that while we needed to choose a density $\nu$ in order that $T$ acts on a Hilbert space, a different choice of $\nu$ would result in a unitarily equivalent realisation of $T,$ and the unitary invariance of the trace implies that the value of $\varphi(T)$ is independent of the choice of $\nu.$

No choice of density is needed to define the Wodzicki residue, since it is defined as the canonical integral of a density on $S^*M.$

Connes' trace theorem was first proved by Connes for Dixmier traces on the Banach envelope of $\Lc_{1,\infty}$ \cite[Theorem 1]{Connes-Action}. Connes' proof works verbatim for all continuous normalised traces on $\Lc_{1,\infty},$ and the extension to arbitrary normalised traces was first proved in \cite[Corollary 7.22]{KLPS}. It is important to note that Connes' theorem for continuous traces follows from the spectral asymptotics of negative order pseudodifferential operators proved earlier by Birman and Solomyak, we review this connection in Subsection \ref{weyl_subsection} below.

\subsection{The trace theorem for Carnot manifolds}
We now state our first main result, although some definitions are deferred to later sections. Here, $(M,H)$ is a compact Carnot manifold, and for $m\in \Cplx,$ $\Psi^m_H(M)$ denotes the set of order $m$ pseudodifferential operators on $M$ in the van Erp-Yuncken sense. This will be defined in Section \ref{vEY_section} below.

A residue functional for the van Erp--Yuncken calculus on a filtered manifold was defined by Dave--Haller \cite{DaveHaller2020}, and the same functional was later defined in a different way by Couchet--Yuncken \cite{CouchetYuncken2024}. The residue of $T\in \Psi^{-\dhom}_H(M)$ will be denoted $\mathrm{Res}(T).$ We review both definitions in in Section \ref{residue_section} below.

The following theorem is one of our main results, and is proved in Section \ref{dave_haller_residue}.
\begin{theorem}\label{main_theorem}
    Let $(M,H)$ be a compact Carnot manifold with homogeneous dimension $\dhom,$ and let $T \in \Psi^{-\dhom}_H(M)$ be a pseudodifferential operator on $(M,H)$ in the van Erp--Yuncken sense. Let $\nu$ be a density on $M.$
    Then $T$ can be identified with an element of $\Lc_{1,\infty}(L_2(M,\nu))$ and for any normalised trace $\varphi$ on $\Lc_{1,\infty}(L_2(M,\nu))$ we have
    \[
        \varphi(T) = \frac{1}{\dhom}\mathrm{Res}(T).
    \]
\end{theorem}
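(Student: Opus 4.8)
The plan is to reduce Theorem~\ref{main_theorem} to a statement about the leading-order spectral asymptotics of $T$ via the Birman--Solomyak / Connes mechanism: once we know that the eigenvalue counting sequence of $|T|$ satisfies $\mu(n,T)\sim C(T)\,n^{-1}$ with $C(T) = \frac{1}{\dhom}\mathrm{Res}(T)\cdot(\text{normalisation})$, it follows from the uniqueness results for traces on $\Lc_{1,\infty}$ (as recalled in the excerpt, \cite{KLPS}) that $\varphi(T)$ takes the stated value for \emph{every} normalised trace, not just continuous ones. So the real content is: (i) $T\in\Lc_{1,\infty}$, and (ii) the residue $\mathrm{Res}(T)$, as defined by Dave--Haller, \emph{is} (up to the factor $\dhom$) the leading spectral coefficient, or equivalently the value of any Dixmier trace on $T$. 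Step (ii) is where the bulk of the work lies, and the excerpt already signals the intended route: ``the trace theorem follows almost immediately from \cite{DaveHaller2020}'' once the residue descriptions are unified.

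First I would recall from Section~\ref{vEY_section} (to be invoked) the structure of the van Erp--Yuncken calculus via the $H$-tangent groupoid, in particular the asymptotic expansion of the symbol/kernel of $T\in\Psi^{-\dhom}_H(M)$ into $\delta$-homogeneous pieces, and the boundedness/compactness mapping properties: an operator of order $-\dhom$ maps the $L_2$-Sobolev scale adapted to $H$ with a gain matching the homogeneous dimension, giving membership in $\Lc_{1,\infty}(L_2(M,\nu))$ by the adapted Weyl law. This handles (i). For (ii), the key step is to express $\mathrm{Res}(T)$ as a genuine integral over a compact ``cosphere'' object — here the unit sphere bundle of $\tf_HM$ with respect to the dilations $\delta_t$ — of the leading homogeneous symbol, against the canonical homogeneous density (the Haar system on the osculating groupoid $T_HM$ paired with the homogeneity-$0$ part of the Liouville-type density, whose Jacobian under $\delta_t$ is exactly $t^{\dhom}$). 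I would then invoke the Dave--Haller identification of this residue with (a multiple of) the value of a Dixmier trace — or, if I reprove it, run the standard argument: localise, use the groupoid FIO/symbol calculus to reduce to a model operator on the osculating group, compute the Mellin transform of the trace of the model operator near the critical pole, and read off the residue. The factor $\frac{1}{\dhom}$ appears for exactly the same reason as the $\frac1d$ in the classical case: the relation $\frac{1}{\log N}\sum_{n\le N}\mu(n,T) \to \frac1\dhom\cdot(\text{homogeneous residue integral})$ comes from integrating the $\delta$-homogeneity-$(-\dhom)$ symbol over a dilation-annulus, where $\dhom$ is the homogeneous degree of the radial variable.

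The main obstacle I expect is \emph{not} the abstract trace theory — that is entirely off-the-shelf given \cite{KLPS} — but rather pinning down precisely that the Dave--Haller residue functional and the spectral/Dixmier coefficient agree, \emph{including the constant}, in a way that is invariant under the choices (splitting $\psi$, chart, partition of unity, density $\nu$). Concretely: one must check that the leading homogeneous kernel of $T$, when transported through a splitting and integrated over the dilation sphere bundle of $\tf_HM$, yields a number independent of $\psi$ — this is where the graded-nilpotent structure and the homogeneity of the Haar system must be used carefully, since unlike the Riemannian case there is no canonical metric and the ``Liouville density'' must be built intrinsically from the filtration. A secondary technical point is that $T$ of order exactly $-\dhom$ need not be ``classical'' in the sense of having a genuine polyhomogeneous leading term unless one is inside the polyhomogeneous subalgebra; I would therefore state the theorem for the polyhomogeneous (van Erp--Yuncken / Couchet--Yuncken) calculus and remark that log-homogeneous terms, if present, contribute $0$ to $\mathrm{Res}$ and to $\varphi(T)$ alike — exactly as in Connes' original argument. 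Once these points are settled, the conclusion $\varphi(T)=\frac{1}{\dhom}\mathrm{Res}(T)$ is immediate from the uniqueness of normalised traces on $\Lc_{1,\infty}$.
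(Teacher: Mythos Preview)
Your main proposed route has a genuine gap. You want to first establish a Weyl-type asymptotic $\mu(n,T)\sim C(T)\,n^{-1}$ and then deduce the trace theorem from it via \cite{KLPS}. But the paper explicitly records (Subsection~\ref{weyl_subsection}) that this Weyl law is an \emph{open problem} for Carnot manifolds: the asymptotic \eqref{only_weyl} is not known, and even Conjecture~\ref{big_conjecture} with a remainder is stated as unproved. Moreover, the paper points out that even if \eqref{only_weyl} were available it would \emph{not} imply Theorem~\ref{main_theorem_e_values_version}; it only yields the formula for \emph{continuous} normalised traces (Dixmier traces), not for arbitrary ones. So your reduction ``Weyl asymptotics $\Rightarrow$ all normalised traces'' is doubly problematic: the hypothesis is unavailable, and the implication is false in the generality you need.

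The paper's actual mechanism bypasses Weyl asymptotics entirely. It uses Dave--Haller's result (Theorem~\ref{DH_analyticity_theorem}) that complex powers $P^{-s}$ of a positive Rockland operator form a holomorphic family in the van Erp--Yuncken calculus, so that $\zeta_{A,P}(s)=\Tr(AP^{-s})$ has a meromorphic continuation with at most a simple pole at $s=\dhom/m$ (Corollary~\ref{zeta_function_analyticity}). One then applies the zeta-function criterion for universal measurability \cite[Theorem~9.1.5(a)]{LordSukochevZanin2021}: analytic continuation of $s\mapsto\Tr(AV^s)$ past $s=1$ with a simple pole there forces $\varphi(AV)$ to equal the residue for \emph{every} normalised trace $\varphi$, continuous or not. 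Writing $T=AP^{-\dhom/m}$ and identifying the residue with $\frac{1}{\dhom}\Res_{DH}(T)$ gives the theorem (Corollary~\ref{microlocal_form_of_trace_theorem}); the remaining work is the purely algebraic identification $\Res_{DH}=\Res_{CY}$ (Theorem~\ref{identity_of_residues}). You do mention Mellin transforms and zeta poles in passing, but you frame this as a way to \emph{compute} the Weyl coefficient rather than as the direct route to the trace formula; the point is that the zeta-residue criterion is strictly stronger than what any available Weyl law would give you here, and is the crux of the argument.
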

Once again, we needed to choose a density $\nu$ so that $T$ could be realised as an operator on a Hilbert space, and we could have avoided this by defining $T$ on the Hilbert space of half-densities on $M.$ Unitary invariance implies that $\varphi(T)$ is independent of $\nu,$ and no choice of density is needed for Couchet--Yuncken's definition of the residue. Theorem \ref{main_theorem} generalises Theorem \ref{ctt_original} and the trace theorem for Heisenberg manifolds due to Ponge \cite[Theorem 4.12]{Ponge2007}.

An equivalent formulation of Theorem \ref{main_theorem} that makes no explicit reference to traces is as follows. Given a compact operator $T,$ let $\{\lambda(n,T)\}_{n=0}^\infty$ denote an enumeration of the eigenvalues of $T$ with algebraic multiplicities and with $|\lambda(n,T)|$ non-increasing. If $T$ has finitely many eigenvalues, then set $\lambda(n,T)=0$ for $n$ greater than the number of nonzero eigenvalues. In particular $\lambda(n,T)=0$ for all $n$ if $T$ is quasinilpotent.
\begin{theorem}\label{main_theorem_e_values_version}
    Let $(M,H)$ be a compact Carnot manifold, and let $T\in \Psi^{-\dhom}_H(M)$ be a pseudodifferential operator on $(M,H)$ in the van Erp-Yuncken sense. Let $\nu$ be a density on $M,$ and identify $T$ with its realisation as an operator on $L_2(M,\nu).$
    As $N\to\infty,$ we have
    \[
        \sum_{n=0}^N \lambda(n,T) = \frac{\mathrm{Res}(T)}{\dhom}\log(N)+O(1).
    \]
\end{theorem}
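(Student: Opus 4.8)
The plan is to derive Theorem \ref{main_theorem_e_values_version} from Theorem \ref{main_theorem} by an entirely soft argument inside $\Lc_{1,\infty}$. Theorem \ref{main_theorem} says precisely that, once realised on $L_2(M,\nu)$, the operator $T$ lies in $\Lc_{1,\infty}$ and that $\varphi(T)$ equals the single number $\frac{1}{\dhom}\mathrm{Res}(T)$ for \emph{every} normalised trace $\varphi$. So it suffices to invoke the following general fact: for $A\in\Lc_{1,\infty}$ and $c\in\Cplx$, one has $\varphi(A)=c$ for every normalised trace $\varphi$ if and only if
\[
    \sum_{n=0}^{N}\lambda(n,A)=c\log(N)+O(1),\qquad N\to\infty.
\]
This is the operator-theoretic analogue of the classical equivalence between a Tauberian condition on partial sums and invariance of the limit under all reasonable summation methods; in the form stated it is due to Kalton--Lord--Potapov--Sukochev \cite{KLPS} (see also \cite{LordSukochevZanin2021}), and it rests on a Lidskii-type formula expressing the value of an arbitrary trace in terms of the eigenvalue sequence $\{\lambda(n,A)\}_{n\geq 0}$ together with the identification of the commutator subspace of $\Lc_{1,\infty}$. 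Applying it with $A=T$ and $c=\frac{1}{\dhom}\mathrm{Res}(T)$ gives the theorem.

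Two small points need attention. First, $T$ is not assumed self-adjoint, or even normal, so the statement genuinely concerns the eigenvalues $\lambda(n,T)$ ordered by decreasing modulus rather than the singular numbers; the cited equivalence and the underlying Lidskii formula are valid at this generality, the only prerequisite being that the partial sums $\sum_{n=0}^{N}\lambda(n,T)$ grow at most logarithmically, which already follows from $T\in\Lc_{1,\infty}$ via Weyl's majorisation $\sum_{n=0}^{N}|\lambda(n,T)|\leq\sum_{n=0}^{N}\mu(n,T)=O(\log N)$, and more sharply from the spectral asymptotics of negative-order operators on a Carnot manifold recalled in Subsection \ref{weyl_subsection}. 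Second, one must use the version of the equivalence with remainder $O(1)$: the weaker asymptotic with remainder $o(\log N)$ characterises only the agreement of all \emph{Dixmier} traces, which is strictly weaker than what Theorem \ref{main_theorem} provides.

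Thus the genuine content of the theorem is not in this step at all --- the analytic input, namely $T\in\Lc_{1,\infty}$ and the evaluation of the common trace as $\frac{1}{\dhom}\mathrm{Res}(T)$, is carried by Theorem \ref{main_theorem}, hence ultimately by the Dave--Haller description of the residue \cite{DaveHaller2020}. If one wanted a proof bypassing Theorem \ref{main_theorem}, the natural alternative would be to obtain the asymptotic directly from the heat-trace (or meromorphic zeta-function) expansion underlying the Dave--Haller residue by an Ikehara-type Tauberian argument; there the main obstacle would be handling the non-self-adjoint part of $T$, that is, converting a statement about $\Tr(Te^{-t\Delta})$ as $t\downarrow 0$ into one about the ordered eigenvalues of $T$.
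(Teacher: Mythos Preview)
Your proposal is correct and matches the paper's own argument: the paper simply notes that the equivalence of Theorem \ref{main_theorem} and Theorem \ref{main_theorem_e_values_version} is \cite[Theorem 9.1.2(a)]{LordSukochevZanin2021}, which is exactly the ``$\varphi(A)=c$ for all normalised traces iff $\sum_{n\le N}\lambda(n,A)=c\log N+O(1)$'' criterion you invoke. Your additional remarks on the non-self-adjoint case and the necessity of the $O(1)$ (as opposed to $o(\log N)$) remainder are accurate and help clarify why the full strength of Theorem \ref{main_theorem} is being used.
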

The equivalence of Theorem \ref{main_theorem} and Theorem \ref{main_theorem_e_values_version} is \cite[Theorem 9.1.2(a)]{LordSukochevZanin2021}.
Just as in Theorem \ref{main_theorem}, the choice of density $\nu$ is irrelevant due to the unitary invariance of the eigenvalue sequence.

\subsection{Relation to Weyl laws and an open problem}\label{weyl_subsection}
In the trivially filtered case, it is possible to make improvements over the eigenvalue asymptotics in Theorem \ref{main_theorem_e_values_version}. It is reasonable to expect that the following is true:
\begin{conjecture}\label{big_conjecture}
    Let $(M,H)$ be a compact Carnot manifold, and let $T\in \Psi^{-\dhom}_H(M)$ be a pseudodifferential operator on $(M,H)$ in the van Erp-Yuncken sense. Let $\nu$ be a density on $M,$ identify $T$ with its realisation as an operator on $L_2(M,\nu),$ and assume that $T$ is self-adjoint with respect to the inner product on $\nu.$
    There exists $\delta>0$ such that as $n\to\infty$ we have
    \[
        \lambda(n,T) = \frac{\mathrm{Res}(T)}{\dhom}n^{-1}+O(n^{-1-\delta}).
    \]
\end{conjecture}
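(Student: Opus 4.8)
The plan is to deduce Conjecture~\ref{big_conjecture} from a \emph{sharp Weyl law with power-saving remainder} for maximally hypoelliptic differential operators on $(M,H)$, in the same way that the eigenvalue form of Theorem~\ref{ctt_original} follows from H\"ormander's Weyl remainder in the trivially filtered case. The first step is a reduction to the inverse of a differential operator, and here one should assume at the outset that the principal symbol of $T$ (a field of operators on the osculating groups satisfying the Rockland condition) is \emph{positive definite}: this hypothesis appears to be genuinely necessary, since already in the trivially filtered case a self-adjoint $T\in\Psi^{-d}(M)$ with indefinite, nonvanishing principal symbol has both its positive and its negative eigenvalue sequences decaying exactly like $c^{\pm}n^{-1}$ with $c^{\pm}>0$, so that $|\lambda(n,T)|\sim (c^++c^-)n^{-1}$ while $\mathrm{Res}(T)/\dhom=c^+-c^-$ can be strictly smaller or even zero, contradicting the stated remainder. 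Under the positivity hypothesis one writes $T=L^{-1}+S$, where $L\in\mathrm{DO}^{\dhom}_H(M)$ is positive, self-adjoint and maximally hypoelliptic and $S$ is smoothing; that $L$ (a parametrix of $T^{-1}$) again lies in the calculus, and that its complex powers behave well, are structural features of the van Erp--Yuncken calculus \cite{vanErpYuncken2019}. Since $S$ has rapidly decreasing singular values it cannot affect a remainder of size $O(n^{-1-\delta})$, and $\mathrm{Res}(L^{-1})=\mathrm{Res}(T)$ is computed from the subleading structure of $L$ by Dave--Haller's residue formula \cite{DaveHaller2020}. With these in hand the conjecture for such $T$ becomes equivalent to
\[
    N_L(\lambda):=\#\{n:\lambda_n(L)\le\lambda\}=\frac{\dhom}{\mathrm{Res}(L^{-1})}\,\lambda+O(\lambda^{1-\delta'}),\qquad \delta'>0,
\]
and an elementary inversion gives $\lambda_n(T)=\lambda_n(L^{-1})+O(n^{-\infty})=\frac{\mathrm{Res}(T)}{\dhom}n^{-1}+O(n^{-1-\delta})$.

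The substance of the argument is thus the displayed counting-function estimate for a positive maximally hypoelliptic $L$. The leading term with an $o(\lambda)$ remainder already follows from the heat-trace expansion underlying Theorem~\ref{main_theorem_e_values_version} together with a Karamata Tauberian theorem, so what is needed is quantitative control of the error. Two routes suggest themselves. The first is microlocal: build a parametrix for the one-parameter family $e^{itL^{1/\dhom}}$ (or for the wave group of a sub-Laplacian) adapted to the $H$-tangent groupoid, and run a Duistermaat--Guillemin / Levitan--H\"ormander argument, where the role of the cosphere bundle and the geodesic flow is played by the (co)dual of the osculating groupoid $T_HM$ and the Hamiltonian flow of the operator-valued model symbol. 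The second is analytic: meromorphically continue $\zeta_L(s)=\Tr(L^{-s})$ past the line $\Re(s)=1$ with polynomial vertical growth and apply an Ikehara-type Tauberian theorem with remainder. I expect the microlocal step to be the main obstacle, and moreover to be where the second, analytic route also gets stuck: the quantitative continuation and the growth bounds for $\zeta_L$ that an effective Ikehara theorem requires are, as usual for spectral zeta functions, obtained \emph{from} hyperbolic propagation estimates, so the two approaches are not independent. The essential difficulty is that the characteristic set of $L$ (the conormal variety of $H^{N-1}$, or wherever the Rockland symbol degenerates) is not a smooth conic hypersurface, the relevant bicharacteristic dynamics is that of a sub-Riemannian-type symbol with abnormal extremals to contend with, and the symbol is operator-valued, so that even formulating the flow requires the orbit method.

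A realistic first checkpoint, short of the general conjecture, is to establish the displayed Weyl remainder for $(M,H)$ of step two — in particular for contact and, more generally, Heisenberg manifolds — by importing the wave-parametrix constructions already available there; this would recover, and slightly sharpen, Ponge's asymptotics \cite{Ponge2007}. It is worth noting that even in these cases the refined heat trace carries logarithmic terms (for instance a $t^{-1}\log(1/t)$ term on a contact $3$-manifold), so the Weyl law has a secondary term of the form $\lambda^{\dhom/\dhom-1}\log\lambda$; this is harmless for Conjecture~\ref{big_conjecture}, which only asks for the leading term plus a power-saving remainder, and such a $\log$ contributes at size $O(n^{-1-\delta})$ with $\delta<\tfrac12$. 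Establishing the step-two case, and then identifying the structural condition on the sub-Riemannian geodesic flow (finiteness and non-degeneracy of periodic orbits, control of the abnormal set) under which the power saving survives, seems to me the natural order of attack.
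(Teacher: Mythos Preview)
The paper does not prove this statement: it is explicitly labelled a conjecture and left open. The paper's own commentary in Subsection~\ref{weyl_subsection} identifies the key missing ingredient as ``Weyl asymptotics for positive order maximally hypoelliptic pseudodifferential operators,'' which is exactly what your outline isolates as the substance of the problem. So there is no proof to compare against; your text is a research programme, and as such it is broadly aligned with the paper's own diagnosis.

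Two substantive remarks. First, your observation that the conjecture as stated is false without a positivity hypothesis on the principal symbol is correct and worth recording: for indefinite self-adjoint $T$ the eigenvalue sequence ordered by decreasing modulus changes sign on the scale $n^{-1}$, so it cannot satisfy $\lambda(n,T)=cn^{-1}+O(n^{-1-\delta})$ unless both $c^{+}$ and $c^{-}$ vanish. Already on $S^1$ with $T$ having scalar symbol $\mathrm{sgn}(\xi)|\xi|^{-1}$ one has $\mathrm{Res}_W(T)=0$ while $|\lambda(n,T)|\sim 2n^{-1}$. The paper implicitly concedes this when it passes to the separate asymptotics for $\lambda(n,T_{\pm})$ and notes the difficulty of even defining $\mathrm{Res}(T_{\pm})$ in the filtered case. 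Second, in your reduction you write $T=L^{-1}+S$ with $L\in\mathrm{DO}^{\dhom}_H(M)$; in general the parametrix of $T$ is only in $\Psi^{\dhom}_H(M)$, not differential, so either allow $L$ to be pseudodifferential or pass to a power of a Rockland differential operator and adjust the exponent in the counting function accordingly. (The exponent in your ``secondary term $\lambda^{\dhom/\dhom-1}\log\lambda$'' is also garbled; presumably you meant something like $\lambda^{(\dhom-1)/\dhom}\log\lambda$.)

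Beyond these points, what you have is an informed sketch of where the difficulty lies rather than a proof, and you say as much. The wave-parametrix and effective-Tauberian routes you describe are indeed the standard mechanisms for power-saving remainders, and the obstacles you name --- operator-valued symbols, a degenerate characteristic variety, sub-Riemannian bicharacteristic dynamics with abnormals --- are the genuine ones. No result in the literature currently supplies the needed sharp Weyl remainder in the general filtered setting, so the conjecture (suitably amended by your positivity hypothesis) remains open.
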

The trivially filtered case has $\delta = \frac{1}{\mathrm{dim}(M)}$ \cite[Theorem 11.8.1]{Ivrii_microlocal_II}.
Obviously Conjecture \ref{big_conjecture} implies Theorem \ref{main_theorem_e_values_version} for self-adjoint $T.$

The form of the asymptotics without an explicit remainder, namely
\begin{equation}\label{only_weyl}
    \lambda(n,T) = \frac{\mathrm{Res}(T)}{\dhom}n^{-1}+o(n^{-1}),\quad n\to \infty
\end{equation}
is also an open problem. The corresponding assertion in the unfiltered case was first proved by Birman and Solomyak \cite{BirmanSolomyakWeaklyPolar1970,BirmanSolomyakAnisotropic1977}. While \eqref{only_weyl} does not imply Theorem \ref{main_theorem_e_values_version}, it does imply that
for all continuous normalised traces $\varphi$ on $\Lc_{1,\infty}$ we have
\[
    \varphi(T) = \frac{1}{\dhom}\mathrm{Res}(T).
\]
In particular, \eqref{only_weyl} is already enough to imply Connes' trace theorem for Dixmier traces.

The original result of Birman--Solomyak was actually stronger than \eqref{only_weyl} in that the eigenvalue sequences of the positive and negative parts of $T,$ $\{\lambda(n,T_+)\}_{n=0}^\infty$ and $\{\lambda(n,T_-)\}_{n=0}^\infty$ were considered separately. We should expect that \eqref{only_weyl} can be strengthened to a statement of the form
\[
    \lambda(n,T_{\pm}) = \frac{\mathrm{Res}(T_{\pm})}{\dhom}n^{-1}+o(n^{-1}),\quad n\to\infty.
\]
The issue with this is that is it not clear what should be meant by $\mathrm{Res}(T_{\pm}).$ In the trivially filtered case, $\mathrm{Res}(T)$ is a functional of the principal symbol $\sigma_{-d}(T)$ which makes sense when $\sigma_{-d}(T)$ is replaced by its positive and negative parts $\sigma_{-d}(T)_{\pm},$ but in the Carnot case we do not yet know how to make sense of $\mathrm{Res}(T_{\pm}).$

The ``soft" proof of Weyl's law for negative order operators due to Ponge \cite{PongeJNCG2023} is likely to be relevant in this setting. The key missing ingredient at this time to emulating Ponge's proof for the Carnot case is Weyl asymptotics for positive order maximally hypoelliptic pseudodifferential operators.

\subsection{The trace theorem for $k$-log-polyhomogeneous operators on Carnot manifolds}
Let $(M,H)$ be a compact Carnot manifold. For $m\in \Cplx$ and $k\geq 0,$ we denote by $\Psi^{m,k}_H(M)$ a space of operators that we call $k$-log-polyhomogeneous operators. These are defined in Subsection \ref{log_poly_subsection} below. In the unfiltered case, $\Psi^{m,k}_H(M)$ reproduces the log-polyhomogeneous calculus of Lesch \cite{Lesch1999}. We will define a $k$th order residue functional $\Res_k$ on $\Psi^{-Q,k}_H(M).$

Operators in $\Psi^{-Q,k}_H(M)$ do not belong to $\Lc_{1,\infty},$ but instead to the ideal $\Ec_{1,k}$ of compact operators $T$ such that
\[
    \mu(n,T) = O(\frac{\log(n+2)^k}{n+1}),\quad n\to\infty.
\]
See Section \ref{ideals_section} for further details. The ideal $\Ec_{1,k}$ has traces, and our version of the trace theorem for $k$-log-polyhomogeneous operators is as follows:
\begin{theorem}\label{k_log_main_thm}
    Let $(M,H)$ be a compact Carnot manifold with homogeneous dimension $\dhom.$ Let $k\geq 1,$ and $T \in \Psi^{-\dhom,k}_H(M).$ For all extended limits $\omega,$ we have
    \[
        \Tr_{\omega,k}(T) = \frac{1}{\dhom(k+1)!}\Res_k(T)
    \]
    where $\Tr_{\omega,k}$ is the Dixmier trace on $\Ec_{1,k}$ corresponding to $\omega.$ Equivalently, we have
    \[
        \sum_{n=0}^N \lambda(n,T) = \frac{1}{\dhom(k+1)!}\Res_k(T)\log(N+2)^{k+1} + o(\log(N+2)^{k+1}),\quad N\to\infty.
    \]
\end{theorem}
Theorem \ref{k_log_main_thm} is proved in Section \ref{log_poly_proof_section}.
We have $\Psi^{m,0}_H(M) = \Psi^m_H(M),$ but note that Theorem \ref{k_log_main_thm} does not recover Theorem \ref{main_theorem_e_values_version} when $k=0.$ The class of traces is smaller (only Dixmier traces as opposed to all traces) and the remainder term is $o(\log(N+2))$ rather than $O(1).$ This is a limitation of the proof technique; the ideals $\Ec_{1,k}$ having a less well-developed theory than $\Lc_{1,\infty}.$ It may be possible to improve the $o(\log(N+2)^{k+1})$ to $O(\log(N+2)^k),$ but this is out of scope.

\subsection{Acknowledgements}
This research was partially financed by the project OpART (ANR-23-CE40-0016) of the Agence Nationale de la Recherche and by the Alexander von Humboldt Stiftung GmbH.

\section{Preliminaries on operator ideals}\label{ideals_section}
Let $\Hc$ be a Hilbert space. Denote by $\Kc(\Hc),$ or simply $\Kc$ for brevity, the ideal of compact operators on $\Hc.$ For $T \in \Kc,$ the singular value function $\mu(T)$ of $T$ is defined by
\[
    \mu(t,T) = \inf\{\|T-R\|_{\infty}\;:\;\mathrm{rank}(R)\leq t\},\quad t \geq 0.
\]
Here, $\|\cdot\|_{\infty}$ is the operator norm.
Note that $\mu(T)$ is a step function, and encodes essentially the same information as the singular value sequence $\{\mu(n,T)\}_{n=0}^\infty.$ We could have also defined $\mu(t,T)$ as the $\lfloor t\rfloor+1$th largest eigenvalue of $|T|,$ ordered with multiplicities. Writing $\mu(T)$ as a function of a positive real argument serves to simplify some computations.

For $0<p<\infty,$ the Schatten ideal $\Lc_p$ is defined by the (quasi)norm
\[
    \|T\|_{\Lc_p} := \left(\int_0^{\infty} \mu(t,T)^{p}\,dt\right)^{1/p}.
\]
While the weak Schatten ideal $\Lc_{p,\infty}$ has quasinorm
\[
    \|T\|_{\Lc_{p,\infty}} := \sup_{t>0}\,t^{\frac1p}\mu(t,T).
\]

The following is a well-known identity for the $K$-functional of the pair $(\Lc_2,\Kc).$ It follows from the formula for the $K$-functional for commutative $L_p$ spaces \cite[Theorem 5.2.1]{Bergh-Lofstrom-1976} and the relationship between $K$-functionals of commutative and noncommutative $L_p$ spaces \cite[Proposition 7.4.2]{DdPS-vapour}.
\begin{lemma}\label{K_function_identity}
    Let $t>0.$ For all $T\in \Kc,$ we have
    \[
        \left(\int_0^{t^2} \mu(s,T)^2\,ds\right)^{1/2} \approx \inf\{\|S\|_{\Lc_2}+t\|T-S\|_{\infty}\;:\; S \in \Lc_2\}.
    \]
    The implied constants in the notation $\approx$ are universal.
\end{lemma}

Generalising the weak Schatten ideals $\Lc_{p,\infty}$ is a class of ideals that we denote $\Ec_{p,k},$ as follows.
\begin{definition}
    Let $0<p<\infty$ and $k\geq 0.$ Say that $T\in \Ec_{p,k}$ if
    \[
        \|T\|_{\Ec_{p,k}} := \sup_{t>0}\,t^{\frac{1}{p}}\log(t+1)^{-k} \mu(t,T) < \infty.
    \]
\end{definition}
Note that $\Ec_{p,0} = \Lc_{p,\infty}.$ The singular value inequalities
\[
    \mu(t+s,T+S)\leq \mu(t,T)+\mu(s,S),\,\mu(t+s,TS)\leq \mu(t,T)\mu(s,S).
\]
readily imply that $\Ec_{p,k}$ is a ideal, and that
\begin{equation}\label{holder_type}
    \Ec_{p,k}\cdot \Ec_{q,\ell} \subseteq \Ec_{r,k+\ell},\quad p,q,k,\ell>0
\end{equation}
where
\[
    \frac{1}{r}=\frac{1}{p}+\frac{1}{q}.
\]

The following lemma relates the quantity in Lemma \ref{K_function_identity} with the $\Ec_{p,k}$-quasinorm.
\begin{lemma}\label{Hardy_type_inequality}
    Let $2<p<\infty$ and $k\geq 0.$ For $T \in \Kc,$ we have
    \[
        \|T\|_{\Ec_{p,k}} \approx \sup_{t>0} \,t^{\frac{2}{p}-1}\log(t+1)^{-k}\left(\int_0^{t^2} \mu(s,T)^2\,ds\right)^{\frac12}.
    \]
    The implied constants depend only on $k$ and $p.$
\end{lemma}
\begin{proof}
    Since $\mu(T)$ is monotone decreasing, we have
    \[
        \mu(t^2,T) \leq \frac{1}{t}\left(\int_0^{t^2}\mu(s,T)^2\,ds\right)^{1/2}
    \]
    Therefore
    \[
        \sup_{t>0} t^{\frac{2}{p}}\log(t^2+1)^{-k}\mu(t^2,T) \leq \sup_{t>0} \,t^{\frac{2}{p}-1}\log(t^2+1)^{-k}\left(\int_0^{t^2} \mu(s,T)^2\,ds\right)^{\frac12}.
    \]
    Hence
    \[
        \|T\|_{\Ec_{p,k}} \lesssim \sup_{t>0} \,(t+1)^{\frac{2}{p}-1}\log(t+1)^{-k}\left(\int_0^{t^2} \mu(s,T)^2\,ds\right)^{\frac12}.
    \]
    On the other hand, using
    \[
        \mu(s,T) \leq \|T\|_{\Ec_{p,k}}s^{-\frac{1}{p}}\log(s+1)^k
    \]
    we have
    \[
        \left(\int_0^{t^2}\mu(s,T)^2\,ds\right) \leq \|T\|_{\Ec_{p,k}}\left(\int_0^{t^2} s^{-\frac{2}{p}}\log(s+1)^{2k}\,ds\right)^{1/2}
    \]
    Since $p>2,$ this latter integral converges and
    \[
    \left(\int_0^{t^2} s^{-\frac{2}{p}}\log(s+1)^{2k}\,ds\right)^{1/2} \approx t^{1-\frac{2}{p}}\log(t+1)^k.
    \]
    Therefore
    \[
        \sup_{t>0} t^{\frac{2}{p}-1}\log(t+1)^{-k}\left(\int_0^{t^2}\mu(s,T)^2\,ds\right)^{1/2} \lesssim \|T\|_{\Ec_{p,k}}.
    \]

\end{proof}

\subsection{Traces on $\Ec_{1,k}$}
A trace on an ideal $\Ic\subseteq \Bc(\Hc)$ is a unitarily invariant linear functional. That is,
\[
    \varphi:\Ic\to \Cplx,\quad \varphi(UAU^*) = \varphi(A)
\]
for all $A \in \Ic$ and unitary operators $U.$ Equivalently, $\varphi(AB)=\varphi(BA)$ for all $A\in \Ic$ and all $B\in \Bc(\Hc).$

There exist continuous traces on $\Ec_{1,k}$ for all $k\geq 0.$ These can be constructed in a similar way to Dixmier traces on $\Lc_{1,\infty}$.

An extended limit $\omega$ is by definition a state on the $C^*$-algebra $\ell_{\infty}(\Ntrl)$ which vanishes on finitely supported sequences. That is, $\omega$ is a positive continuous linear functional such that $\omega(1)=1$ and $\omega(x)=0$ when $\lim_{n\to\infty} x_n=0.$ For $0\leq T \in \Ec_{1,k},$ we define
\[
    \Tr_{\omega,k}(T) := \omega\left(\left\{\frac{1}{\log(N+2)^{k+1}}\sum_{n=0}^N \mu(n,T)\right\}_{N=0}^\infty\right)
\]
\begin{lemma}
    For all $k\geq 0$ and $0\leq T,S\in \Ec_{1,k},$ we have
    \[
        \Tr_{\omega,k}(T+S) = \Tr_{\omega,k}(T)+\Tr_{\omega,k}(S),\quad 0\leq T,S\in \Ec_{1,k}.
    \]
\end{lemma}
\begin{proof}
    For all $N\geq 0,$ we have
    \[
        \sum_{n=0}^N \mu(n,T+S) \leq \sum_{n=0}^N \mu(n,T)+\mu(n,S)\leq \sum_{n=0}^{2N} \mu(n,T+S).
    \]
    Both of these inequalities follow from the variational characterisation of the sum of the first $N$ singular values, See e.g. \cite[Lemma 4.1]{GohbergKrein}.

    Dividing by $\log(N+2)^{k+1},$ we have
    \[
        0\leq \frac{1}{\log(N+2)^{k+1}}\sum_{n=0}^N \mu(n,T+S)-\mu(n,T)-\mu(n,S) \leq \|T+S\|_{\Ec_{1,k}}\cdot \frac{1}{\log(N+2)^{k+1}}\sum_{n=N+1}^{2N} \frac{\log(n+1)^k}{n}.
    \]
    This is $o(1)$ as $N\to\infty.$ Applying $\omega,$ we conclude that
    \[
        \Tr_{\omega,k}(T+S)=\Tr_{\omega,k}(T)+\Tr_{\omega,k}(S).
    \]
\end{proof}
Hence $\Tr_{\omega,k}$ extends by linearity to a trace on $\Ec_{1,k},$ which we call the \emph{Dixmier trace} on $\Ec_{p,k}$ corresponding to $\omega.$
By definition, if $T\in \Ec_{1,k},$ then we have
\begin{align*}
    \Tr_{\omega,k}(T) &= \omega(\{\frac{1}{\log(N+2)^{k+1}}\sum_{n=0}^N \mu(n,(T+T^*)_+)-\mu(n,(T+T^*)_-)\\
    &\quad+i\mu(n,(T-T^*)_+)-i\mu(n,(T-T^*)_-)\}_{N=0}^{\infty}).
\end{align*}
where $X_{\pm}$ are the positive and negative parts of a self-adjoint operator $X.$ It is not obvious, but nonetheless true, that $\Tr_{\omega,k}(T)$ can be computed from the eigenvalues of $T$ alone. The important feature of $\Ec_{1,k}$ here is that it is geometrically stable, i.e. that if $S\in \Ec_{1,k}$ and
\[
    \mu(n,T) \leq \left(\prod_{j\leq n} \mu(j,S)\right)^{\frac{1}{n+1}}
\]
then $T \in \Ec_{1,k}.$ Indeed, since $\mu(j,S)\lesssim \frac{\log(j+2)^k}{j},$ we have
\[
    \mu(n,T) \lesssim \log(n+2)^k\frac{1}{((n+1)!)^{\frac{1}{n+1}}}\lesssim \frac{\log(n+2)^k}{n+1}.
\]

For a compact operator $T,$ let $\{\lambda(n,T)\}_{n=0}^{\infty}$ denote an enumeration of the eigenvalues of $T$ in order of non-increasing absolute value. If $T$ has finitely many eigenvalues, put $\lambda(n,T)=0$ for $n$ larger than the number of nonzero eigenvalues.

Note that there is a certain ambiguity in the choice of enumeration of $\lambda(n,T),$ since we do not specify how to choose between eigenvalues of the same magnitude. This ambiguity will not end up being important. Note that however we order the eigenvalues, when $T$ is self-adjoint we have $|\lambda(n,T)| = \lambda(n,|T|)=\mu(n,T).$

\begin{theorem}\label{spectrality}
    Let $k\geq 0,$ and let $\omega$ be an extended limit. For all $T \in \Ec_{1,k},$ we have
    \[
        \Tr_{\omega,k}(T) = \omega\left(\left\{\frac{1}{\log(N+2)^{k+1}}\sum_{n=0}^N \lambda(n,T)\right\}_{N=0}^\infty\right)
    \]
\end{theorem}
\begin{proof}
    Initially suppose that $T$ is self-adjoint. Then by definition we have
    \[
        \Tr_{\omega,k}(T) = \omega\left(\{\frac{1}{\log(N+2)^k}\sum_{n=0}^N \mu(n,T_+)-\mu(n,T_-)\}_{N=0}^\infty\right).
    \]
    The eigenvalue sequence of $T$ is the sequence $\{\mu(n,T_+)\}_{n=0}^\infty\sqcup \{-\mu(n,T_-)\}_{n=0}^\infty,$ arranged in decreasing absolute value. Therefore,
    \[
        \left|\sum_{n=0}^N \mu(n,T_+)-\mu(n,T_-)-\lambda(n,T)\right| \leq \sum_{n\leq N,\,\mu(n,T_+)\leq |\lambda(N,T)|} \mu(n,T_+) + \sum_{n\leq N,\, \mu(n,T_-)\leq |\lambda(N,T)|} \mu(n,T_-)
    \]
    Weyl's inequality (see e.g. \cite[Lemma 3.3]{GohbergKrein}) asserts that
    \[
        \prod_{n=0}^N |\lambda(n,T)| \leq \prod_{n=0}^N \mu(n,T).
    \]
    The geometric stability of $\Ec_{1,k}$ mentioned above implies that there exists a constant $C_T$ such that
    \[
        |\lambda(N,T)| \leq C_T \frac{\log(N+2)^k}{N+1}.
    \]
    Therefore
    \[
        \sum_{n\leq N,\,\mu(n,T_+)\leq |\lambda(N,T)|}\mu(n,T_+) \leq \sum_{n=0}^N C_T\frac{\log(N+2)^k}{N} \leq 2C_T\log(N+2)^k.
    \]
    Similarly
    \[
        \sum_{n\leq N,\,\mu(n,T_-)\leq |\lambda(N,T)|}\mu(n,T_-) \leq \sum_{n=0}^N C_T\frac{\log(N+2)^k}{N} \leq 2C_T\log(N+2)^k.
    \]
    So, for self-adjoint $T$ we have
    \[
        \sum_{n=0}^N \lambda(n,T) = \sum_{n=0}^N \mu(n,T_+)-\mu(n,T_-) + O(\log(N+2)^k).
    \]
    This concludes the proof in the case that $T$ is self-adjoint.
%
    Now consider the case that $T$ is normal.
    By Lemma 5.2.9 of \cite{LordSukochevZanin2021}, for a normal operator $T$ we have
    \[
        \left|\sum_{n=0}^N \lambda(n,T)-\lambda(n,\Re(T))-i\lambda(n,\Im(T))\right| \leq 5(N+1)\mu(N,T),\quad N\geq 0.
    \]
    In this case, $T\in \Ec_{1,k},$ so $(N+1)\mu(N,T)\lesssim \log(N+2)^{k}.$ Hence,
    \[
        \lim_{N\to\infty} \frac{1}{\log(N+2)^{k+1}}\left|\sum_{n=0}^N \lambda(n,T)-\lambda(n,\Re(T))-i\lambda(n,\Im(T))\right| = 0.
    \]
    Therefore for any extended limit we have
    \[
        \omega\left(\left\{\frac{1}{\log(N+2)^k}\sum_{n=0}^N \lambda(n,T)-\lambda(n,\Re(T))-i\lambda(n,\Im(T))\right\}\right) = 0.
    \]
    Since $\Re(T)$ and $\Im(T)$ are self-adjoint, we conclude that
    \[
        \Tr_{\omega,k}(T) = \Tr_{\omega,k}(\Re(T))+\Tr_{\omega,k}(\Im(T)) =  \omega\left(\left\{\frac{1}{\log(N+2)^{k+1}}\sum_{n=0}^N \lambda(n,T)\right\}_{N=0}^\infty\right).
    \]
    So, the result is true when $T$ is normal.

    Finally, let $\lambda(T)$ denote the diagonal operator
    \[
        \lambda(T) = \mathrm{diag}\{\lambda(n,T)\}_{n=0}^\infty.
    \]
    Theorem 8 of \cite{SZ-AiM} says that since $\Ec_{1,k}$ is geometrically stable, for any trace $\varphi$ on $\Ec_{1,k}$ we have $\varphi(T)=\varphi(\lambda(T)).$ Since $\lambda(T)$ is normal, this completes the proof.
\end{proof}

\section{Preliminaries on the van Erp-Yuncken calculus}\label{vEY_section}
    As mentioned in the introduction, we will adopt the following notation for sections: given a vector bundle $E\to M,$ $C^\infty(M,E)$ denotes the space of smooth sections of $E,$ while $C^0(M,E)$ denotes the space of continuous sections. Both $C^\infty(M,E)$ and $C^0(M,E)$ are given their standard Fr\'echet topologies, and Banach space topology in the case of $C^0(M,E)$ when $M$ is compact.
\subsection{Densities and fibred distributions}
    For a vector bundle $E\to M,$ denote $\dens E\to M$ the corresponding density bundle i.e., the line bundle over $M$ whose transition mappings are the absolute values of the inverses of the determinants of the transition mappings
    of $E.$ We abbreviate $\dens M := \dens TM.$ For a submersion of manifolds
    \[
        \pi:M\to N
    \]
    the bundle $\dens_\pi\to M$ of densities transversal to the $r$-fibres is defined as
    \[
        \dens_\pi := \dens \ker(d\pi).
    \]
    For details, see \cite{LescureManchonVassout2017}.
    We will work in the same framework as \cite{vanErpYuncken2019}, and we will follow their convention about distributions: define $\Ec'(M),$ the space of compactly supported distributions on $M,$ as the continuous linear dual of $C^\infty(M).$ A compactly supported continuous density in $C^0(M,\dens M)$ can be identified with a distribution by integration. Distributions in this convention should be thought of as generalised densities, not generalised functions. We will denote the pairing of distributions with smooth functions by angle brackets $\langle \cdot,\cdot\rangle.$ We will distinguish notationally between distributions and functions by using blackboard bold letters ($\Pb,\kbb$ etc.) for distributions and unbolded letters for functions.

    Given a smooth map $f:M\to N,$ the pullback $f^*:C^\infty(N)\to C^\infty(M)$ are defined as usual on functions by
    \[
        f^*u = u\circ f,\quad u\in C^\infty_c(M).
    \]
    The pushforward $f_*$ is defined on distributions by duality, namely
    \[
        \langle f_*\Tb,u\rangle  = \langle T,f^*u\rangle,\quad \Tb \in \Ec'(M),\; u\in C^\infty(N).
    \]
    \begin{definition}
        If $\pi:M\to N$ is a submersion, the space of $\pi$-fibred distributions $\Ec'_\pi(M)$ is the space of continuous $C^\infty(N)$-linear functionals
        \[
            \Tb:C^\infty(M)\to C^\infty(N)
        \]
        where $C^\infty(M)$ is a $C^\infty(N)$-module by $(f\cdot g)(m) = f(\pi(m))g(m)$ where $g \in C^\infty(M)$ and $f \in C^\infty(N).$ The space $\Ec'_\pi(M)$ is equipped with the topology of bounded convergence \cite[p. 337]{Treves1967}. This is defined by the neighbourhood basis of zero given by
        \[
            U_{B,V} = \{\Tb\in \Ec'_r(M)\;:\; \Tb(B)\subset V\}
        \]
        where $B$ ranges over bounded subsets of $C^\infty(M)$ and $V$ ranges over neighbourhoods of zero in $C^\infty(N).$

        We denote the $C^\infty(N)$-valued pairing of $\pi$-fibred distributions with smooth functions by round brackets $(\cdot,\cdot).$
    \end{definition}
    Every $\pi$-properly supported $\pi$-fibred density $\Tb \in C^\infty(M,\dens_\pi)$ defines an $\pi$-fibred distribution $\Tb \in \Ec'_\pi(M)$ by integration. This essentially coincides with the notion of $\pi$-transversal distributions in \cite[Definition 2.3]{LescureManchonVassout2017}, \cite[Section 1.2.1]{AndroulidakisSkandalis2011} with two caveats: first, that in those papers distributions were defined as linear functionals on densities rather than functions, and second that we do not make any restrictions on the support in $N.$
    Related to this we mention two separate reasons why the na\"ive inclusion $\Ec'_\pi(M)\subseteq \Ec'(M)$ is false. The first reason is that to identify an $\pi$-fibred distribution with a distribution on $M,$ we must select a density on the base space $N.$ The second reason is that we do not assume any compact support condition on $\pi$-fibred distributions in the base space $N.$

    Since every submersion is locally a coordinate projection, the notion of $\pi$-fibred distribution is essentially contained within the notion of semiregular distribution in the sense of Tr\`eves \cite[p.532]{Treves1967}.

    The most relevant feature of the topology on $\Ec'_\pi(M)$ is that with this topology $\Ec'_\pi(M)$ is a quasi-complete locally convex space. To see this, we can give a more explicit description of the topology as follows. Let $\{p^{M}_n\}_{n\geq 0}$ and $\{p^N_n\}_{n\geq 0}$ be sequences of seminorms for $C^\infty(M)$ and $C^\infty(N)$ respectively. We can assume without loss of generality that
    \[
        p_0^M \leq p_1^M \leq p_2^M\leq \cdots,\; \quad p_0^N \leq p_1^N\leq p_2^N\leq \cdots.
    \]
    Every bounded subset of $C^\infty(M)$ is contained inside a set of the form
    \[
        B_x = \{u\;:\; p_n^M(u)\leq x_n,\quad n\geq 0\}
    \]
    where $x_0\leq x_1\leq \cdots$ is an increasing sequence. By the Arzela-Ascoli theorem, $B_x$ is compact. Since every closed and bounded subset is compact, $C^\infty(M)$ is a so-called Fr\'echet-Montel space \cite[p.112]{Schwartz1973}. The space $\Lc_b(C^\infty(M),C^\infty(N))$ is a locally convex space when equipped with the seminorms $\{p_{n,x}\}$ parametrised by $n\geq 0$ and increasing sequences $x$ given by
    \[
        p_{n,x}(T) = \sup\{p_n^N(Tu)\;:\; u\in B_x\}.
    \]
    We have defined $\Ec'_\pi(M)$ as a subset
    \[
        \Ec'_\pi(M)\subset \Lc_b(C^\infty(M),C^\infty(N))
    \]
    and equipped it with the subspace topology.
    It is straightforward to check that $\Ec'_\pi(M)$ is closed with the bounded convergence topology. Since $C^\infty(N)$ is a Fr\'echet-Montel space and $\Ec'_\pi(M)$ is a closed subspace of $\Lc_b(C^\infty(M),C^\infty(N)),$ it follows that $\Ec'_{\pi}(M)$ is a locally convex Suslin space in the sense of Thomas \cite{ThomasTAMS75}. The main results of \cite{ThomasTAMS75} imply that there is a good integration theory for functions valued in $\Ec'_{\pi}(M).$ We will only use the fact that if $f:[a,b]\to \Ec'_{\pi}(M)$ is continuous and bounded, then the integral $\int_{a}^b f(t)\,dt$ exists and is differentiable as an $\Ec'_{\pi}(M)$-valued function of $b$ \cite[Theorem 11]{ThomasTAMS75}, with
    \[
        \frac{d}{db}\int_{a}^b f(t)\,dt = f(b).
    \]

    Given a linear map $T:C^\infty(N)\to C^\infty(M),$ its distributional kernel in the sense of Schwartz is denoted by the same letter $T.$ We write
    \[
        Tu(x) = \langle T(x,\cdot),u\rangle,\quad x  \in M,\; u\in C^\infty(N).
    \]
    We will freely identify operators with their Schwartz kernels. Observe that the kernel of a linear map $T:C^\infty(N)\to C^\infty(M)$ is an element of $\Ec'_\pi(M\times N),$ where $\pi:M\times N\to M$ is the coordinate projection.
    We say that an operator $T$ is smoothing if $T$ (identified with its kernel) belongs to $C^\infty(M\times N,\dens_\pi).$

\subsection{The $H$-tangent groupoid}\label{subsec_groupoid_def}
    Recall that a Lie groupoid consists of the following data: a pair of smooth manifolds $(G,G^{(0)}),$ a pair of submersions
    \[
        r,s:G\to G^{(0)}
    \]
    a smooth map $^{-1}:G\to G$ called the inverse, and a smooth map
    \[
        \circ:G^{(2)}\to G
    \]
    called the composition,
    where $G^{(2)}\subseteq G^2$ is the submanifold of composable pairs
    \[
        G^{(2)} = \{(g,h)\in G^2\;:\; r(h)=s(g)\}.
    \]
    The composition and inverse mapping satisfy the condition that $G$ is a small category with inverses and object space $G^{(0)}.$

    We will consider only three Lie groupoids in this paper: the pair groupoid $G=M\times M$ of a smooth manifold $M,$ the osculating groupoid $G = T_HM$ and the $H$-tangent groupoid $G=\Tb_HM$ of a Carnot manifold $(M,H).$ The object space $G^{(0)}=M$ is the same for $G=M\times M$ and $G=T_HM.$ The object space of $\Tb_HM$ is $M\times \Rl.$ We will use the same symbols $r,s$ to denote the range and source submersions in all three groupoids, the precise groupoid in any given case should be clear from context.

    The pair groupoid has range and source maps
    \[
        r(x,y) = x,\; s(x,y) = y
    \]
    composition and inverse
    \[
        (x,y)\circ (y,z) = (x,z),\; (x,y)^{-1} = (y,x).
    \]
    Let $(M,H)$ be a Carnot manifold, as described in Subsection \ref{carnot_intro_section}. We will give a brief description of the $H$-tangent groupoid as defined by van Erp--Yuncken and also by Choi--Ponge. For more details see \cite{vanErpYuncken2017,vanErpYuncken2019,ChoiPonge2018}.
    The osculating groupoid $T_HM$ has already been defined in Section \ref{carnot_intro_section}. The range and source maps are the bundle maps
    \[
        r,s:T_HM\to M.
    \]
    The composition and inverse maps are defined as the fibrewise Lie group multiplication and inversion.

    The $H$-tangent groupoid $\Tb_HM$ takes more effort to define.
    Recall that we have fixed a splitting $\psi:\tf_HM\to TM,$ i.e. an isomorphism with the property that $\psi|_{\tf_HM^k}$ is right-inverse to the quotient $H_k\to H_k/H_{k-1}$ for $k=1,\ldots,N.$ We choose a connection $\nabla^{\psi}$ on $\tf_HM$ with the property that
    \[
        \nabla^{\psi}_X(\delta_t Z) = \delta_t \nabla^{\psi}_XZ,\quad t > 0,\; X \in C^\infty(M, TM),\; Z \in C^\infty(M, \tf_HM).
    \]
    Let $\exp$ denote the exponential with respect to this connection. That is, for each $x \in M,$ there is a local diffeomorphism
    \[
        \exp_x:\tf_HM_x\to M
    \]
    such that for all sufficiently small $v \in \tf_HM_x$ and $\varepsilon>0,$ the path
    \[
        \{\exp_x^{\psi}(tv)\}_{-\varepsilon< t < \varepsilon}
    \]
    is parallel with respect to $\nabla^{\psi}.$

    The bundle $\tf_HM$ is locally trivial as a bundle of graded vector spaces. This means that near any point of $M,$ we can find an open subset $U$ and a local trivialisation of $\tf_HM$
    \[
        \tf_HU \approx U\times V
    \]
    where $V=\Rl^d$ is a fixed graded vector space equipped with dilations $\{\delta_t\}_{t>0}$ and homogeneous dimension $\dhom.$

    The $H$-tangent groupoid is defined as the disjoint union
    \[
        \Tb_HM := T_HM\times \{0\}\sqcup M\times M\times \Rl^{\times}
    \]
    which is a manifold when equipped with the smooth structure on $M\times M\times \Rl^{\times}$ and by declaring the functions
    \[
        \mathrm{Exp}^{\psi}:U\times V\times (-\varepsilon,\varepsilon)\to M\times M\times \Rl,\quad \mathrm{Exp}(x,z,h) = (x,\exp_x^{\psi}(\delta_hz),h)
    \]
    to be smooth for any local trivialisation $U\times V \approx \tf_HU,$ and sufficiently small $\varepsilon.$
    The set $U\times V\times (-\varepsilon,\varepsilon)$ is called a local exponential coordinate chart for $\Tb_HM.$
    A \emph{global} exponential coordinate chart $\Gexp^{\psi}$ is an extension of $\mathrm{Exp}^{\psi}$ to $\tf_HM\times \Rl,$ written as
    \[
        \Gexp^{\psi}:\tf_HM\times \Rl\to \Tb_HM
    \]
    (although $\Gexp$ is defined only on a neighbourhood of $\tf_HM\times \{0\}$ in $\tf_HM\times \Rl$) and given by
    \[
        \Gexp^{\psi}((x,z),h) = \begin{cases}
                                    ((x,z),0),\quad h=0,\\
                                    (x,\exp_x^{\psi}(\delta_hz),h),\quad h\neq 0.
                                \end{cases}
    \]
    for $(x,z)\in \tf_HM$ and sufficiently small $h.$

    The range and source maps on for the groupoid $\Tb_HM,$
    \[
        s,r:\Tb_HM\to M\times \Rl
    \]
    and are defined as
    \begin{align*}
        r(x,y,h) &= x,\quad h\neq 0,\; r((x,z),0) = x,\; z \in \tf_HM_x,\\
        s(x,y,h) &= y,\quad h\neq 0,\; s((x,z),0) = x,\; z\in \tf_HM_x.
    \end{align*}
    These define submersions of $\Tb_HM.$
    The groupoid operation is given by
    \begin{align*}
        (x,y,h)\circ (y,w,h) &= (x,w,h),\quad x,y,w\in M,\; h>0,\\\
        (x,z_1)\circ (x,z_2) &= (x,z_1z_2),\; x\in M,\; z_1,z_2\in T_HM_x.
    \end{align*}
    where $z_1z_2$ is the group operation in $T_HM_x.$
    These choices give $\Tb_HM$ the structure of a Lie groupoid \cite{vanErpYuncken2017}.
    Note that distributions $\Pb\in \Ec'_r(\Tb_HM)$ are $r$-proper in the sense that $r:\mathrm{supp}(\Pb)\to M\times \Rl$ is proper.
    Following \cite{vanErpYuncken2019}, we define the space $C^\infty_p(\Tb_HM,\dens_r)$ of $r$-fibred densities $\fbb$ whose support is $r$ and $s$ proper in the sense that the maps $r,s:\mathrm{supp}(\fbb)\to M\times \Rl$ are proper.

    We will use throughout this paper the notation $\Pb_h$ for the value of an $r$-fibred distribution at $h \in \Rl.$ Since this notation is important we emphasise it here.
    \begin{definition}\label{restriction}
        Let $\Pb \in \Ec'_r(\Tb_HM).$ For $h\in \Rl,$ let $\Pb_h$ be the restriction of $\Pb$ to $r^{-1}(M\times \{h\}).$ To be precise, for $h\neq 0$ we have
        \[
            \Pb_h\in \Ec'_r(M\times M)
        \]
        and
        \[
            \Pb_0 \in \Ec'_r(T_HM).
        \]
    \end{definition}

\subsection{The zoom action and the zoom vector field}
    The zoom action $\alpha$ is the group of automorphisms of $\Tb_HM$ defined by
    \[
        \alpha_\lambda(x,y,h) = (x,y,\lambda^{-1}h),\quad \alpha_\lambda((x,z),0) = ((x,\delta_\lambda z),0),\quad \lambda \in \Rl_+.
    \]
    Observe that for any $f \in C^\infty(\Tb_HM),$ the pullback map
    \[
        \lambda\mapsto \alpha_{\lambda}^*f = f\circ\alpha_{\lambda}
    \]
    is smooth as a map from $\Rl_+$ to the Fr\'echet space $C^\infty(\Tb_HM).$
    The pushforward by $\alpha_{\lambda}$ is defined on distributions $\Pb \in \Ec'(\Tb_HM)$ by
    \[
        \langle (\alpha_{\lambda})_*\Pb,f\rangle = \langle \Pb,\alpha_{\lambda}^*f\rangle,\quad f \in C^\infty(\Tb_HM)
    \]
    where recall from above that $\langle \cdot,\cdot\rangle$ is the $\Cplx$-valued pairing of distributions on $\Tb_HM$ with smooth functions on $\Tb_HM.$ For an $r$-fibred distribution $\Pb \in \Ec'_r(\Tb_HM)$, we have
    \[
        ((\alpha_{\lambda})_*\Pb,f)(x,h) = (\Pb,\alpha_{\lambda}^*f)(x,\lambda h),\quad f\in C^\infty(\Tb_HM),\, x\in M\; h\in \Rl, \lambda>0.
    \]
    Here, again recall from above that $(\cdot,\cdot)$ is the $C^\infty(M\times\Rl)$-valued pairing of $r$-fibred distributions on $\Tb_HM$ with smooth functions on $\Tb_HM.$
    Note also that
    \[
        ((\alpha_{\lambda})_*\Pb)_h = \Pb_{\lambda h},\quad \lambda>0,h\in \Rl
    \]
    where the restriction to $h\in \Rl$ is as in Definition \ref{restriction}.

\begin{lemma}\label{smoothness_and_boundedness}
    Let $\Pb \in \Ec'_r(\Tb_HM).$
    \begin{enumerate}[{\rm (i)}]
        \item{}\label{smoothness} The pushforward
                \[
                    \lambda\mapsto (\alpha_{\lambda})_*\Pb
                \]
                is smooth as a map from $\Rl_+$ to $\Ec'_r(\Tb_HM)$ with the bounded convergence topology.
        \item{}\label{boundedness} The family
                \[
                    \{(\alpha_{\lambda})_*\Pb\}_{0<\lambda\leq 1}
                \]
                is bounded in $\Ec'_r(\Tb_HM).$
        \item{}\label{boundedness_2}
            Let $\fbb \in C^\infty_p(\Tb_HM,\dens_r).$ The family
            \[
                \{\lambda^{\dhom}(\alpha_{\lambda})_*\fbb\}_{0<\lambda\leq 1}
            \]
            is bounded in $C^0(\Tb_HM,\dens_r).$
    \end{enumerate}
\end{lemma}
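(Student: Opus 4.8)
\emph{Approach.} The plan is to reduce everything to the explicit form of the zoom action. In any local exponential coordinate chart $U\times V\times(-\varepsilon,\varepsilon)$ one checks from the definitions of $\alpha$ and of $\Gexp^{\psi}$ that $\alpha_\lambda(x,z,h)=(x,\delta_\lambda z,\lambda^{-1}h)$, while on $M\times M\times\Rl^{\times}$ one simply has $\alpha_\lambda(x,y,h)=(x,y,\lambda^{-1}h)$; together with the identity $((\alpha_\lambda)_*\Pb,f)(x,h)=(\Pb,\alpha_\lambda^*f)(x,\lambda h)$ recorded before the lemma, this underlies all three parts. The decisive feature is a cancellation: the only factor of $\alpha_\lambda$ that is singular as $\lambda\to0$ is $\lambda^{-1}h$, but in $(\Pb,\alpha_\lambda^*f)(x,\lambda h)$ it is evaluated after the base reparametrisation $h\mapsto\lambda h$ and hence equals $\lambda^{-1}(\lambda h)=h$; the only remaining $\lambda$-dependence is the fibre dilation $z\mapsto\delta_\lambda z$, which for $0<\lambda\leq1$ merely keeps fibre arguments inside a fixed compact set. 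Together with the $r$-properness of $\Pb$, which confines $\supp\Pb$ over compact subsets of $M\times\Rl$, this will yield all of the required estimates.

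\emph{Part (i).} For a fixed $f\in C^\infty(\Tb_HM)$, the curve $\lambda\mapsto((\alpha_\lambda)_*\Pb,f)\in C^\infty(M\times\Rl)$ is smooth, being the composite of the smooth curve $\lambda\mapsto\alpha_\lambda^*f$ (noted before the lemma), the continuous linear map $\Pb\colon C^\infty(\Tb_HM)\to C^\infty(M\times\Rl)$, and the smooth reparametrisation $u\mapsto u\circ R_\lambda$ with $R_\lambda(x,h)=(x,\lambda h)$. To promote this pointwise smoothness to smoothness of $\lambda\mapsto(\alpha_\lambda)_*\Pb$ as a curve in $\Ec'_r(\Tb_HM)$ with the bounded convergence topology, I would combine two facts: that $(\lambda,f)\mapsto\alpha_\lambda^*f$, and each of its $\lambda$-derivatives, is jointly continuous (the zoom action being smooth), and that $C^\infty(\Tb_HM)$ is Fr\'echet--Montel, so bounded subsets are relatively compact. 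The second fact turns the joint continuity into uniformity over bounded sets of $f$, so that the difference quotients of $\lambda\mapsto(\alpha_\lambda)_*\Pb$, and all their $\lambda$-derivatives, converge in $\Lc_b(C^\infty(\Tb_HM),C^\infty(M\times\Rl))$; and since $\Ec'_r(\Tb_HM)$ is a closed linear subspace of that space, all these limits again lie in $\Ec'_r(\Tb_HM)$, whence the curve is smooth into $\Ec'_r(\Tb_HM)$.

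\emph{Parts (ii) and (iii).} For (ii), fix a bounded set $B\subset C^\infty(\Tb_HM)$, a compact $C\subset M\times\Rl$, and $k\geq0$; the task is to bound $\|((\alpha_\lambda)_*\Pb,f)\|_{C^{k}(C)}$ uniformly over $0<\lambda\leq1$ and $f\in B$. Unwinding the $r$-fibred pairing, $((\alpha_\lambda)_*\Pb,f)(x,h)$ is the pairing of the restriction of $\Pb$ to the fibre over $(x,\lambda h)$ against the restriction of $\alpha_\lambda^*f$ to that fibre, and by the explicit form of $\alpha_\lambda$ the latter is $z\mapsto f(x,\delta_\lambda z,h)$ in exponential coordinates (respectively $y\mapsto f(x,y,h)$ away from $h=0$) — the $h$-argument of $f$ being $\lambda^{-1}(\lambda h)=h$, so that no singular reparametrisation survives. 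By $r$-properness the fibre restrictions of $\Pb$ over $C'=\{(x,\lambda h):(x,h)\in C,\ 0\leq\lambda\leq1\}$ have uniformly compact support and uniformly bounded order, and since $0<\lambda\leq1$ the dilations $\delta_\lambda$ keep the fibre arguments of $f$ within a fixed compact subset of $\Tb_HM$, so these functions stay bounded in $C^\infty$ as $f$ ranges over $B$; pairing, and noting that $(x,h)$-differentiation only introduces extra factors $\lambda\leq1$ through the chain rule, gives the bound. (Alternatively, one checks that $\lambda\mapsto(\alpha_\lambda)_*\Pb$ extends continuously to $[0,1]$ and invokes compactness of its image.) For (iii), write $\fbb$ in an exponential chart as a density in the fibre variables $z$ with continuous coefficient $\phi$. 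There $\alpha_\lambda$ dilates the fibre variables by $\delta_\lambda$, and $\det\delta_\lambda=\lambda^{\dhom}$, so the pushforward of an $r$-fibred density acquires the Jacobian $\lambda^{-\dhom}$, which the prefactor $\lambda^{\dhom}$ exactly cancels; hence the coefficient of $\lambda^{\dhom}(\alpha_\lambda)_*\fbb$ at a point $p$ is $\phi(\alpha_{\lambda^{-1}}(p))$. Given a compact $K\subset\Tb_HM$, the set $r(\alpha_{\lambda^{-1}}(K))$ lies, for all $0<\lambda\leq1$, in a fixed compact $C''\subset M\times\Rl$ (since $\alpha_{\lambda^{-1}}$ only rescales the $\Rl$-factor of the range, by $h\mapsto\lambda h$); so whenever $\phi(\alpha_{\lambda^{-1}}(p))\neq0$ for $p\in K$ the point $\alpha_{\lambda^{-1}}(p)$ lies in the compact set $\supp\fbb\cap r^{-1}(C'')$, and therefore $|\lambda^{\dhom}(\alpha_\lambda)_*\fbb(p)|\leq\sup_{\supp\fbb\cap r^{-1}(C'')}|\phi|$ uniformly in $p\in K$ and $0<\lambda\leq1$, which is the asserted boundedness in $C^0(\Tb_HM,\dens_r)$.

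\emph{Main obstacle.} The one genuinely non-formal step is the topological upgrade in (i): passing from pointwise smoothness in the parameter to smoothness in the bounded convergence topology really uses the Fr\'echet--Montel property of $C^\infty(\Tb_HM)$ and the joint continuity of the zoom action together with its parameter derivatives. Parts (ii) and (iii) are elementary; the only point requiring care there is organising the support estimates across the two coordinate regimes ($h$ near $0$ versus $|h|$ bounded away from $0$), but once the cancellation $\lambda^{-1}(\lambda h)=h$ has been isolated no real singularity in $\lambda$ remains.
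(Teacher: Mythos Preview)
Your approach is essentially the same as the paper's: both reduce to local exponential coordinates, use the explicit form $\alpha_\lambda(x,z,h)=(x,\delta_\lambda z,\lambda^{-1}h)$, and exploit the cancellation $\lambda^{-1}(\lambda h)=h$ together with $r$-properness to see that the only remaining $\lambda$-dependence is the harmless fibre dilation. Your treatment of part~(i) is in fact more careful than the paper's, which simply asserts that smoothness and boundedness ``in these coordinates are immediate'' without spelling out the passage from pointwise smoothness to smoothness in the bounded convergence topology; your use of the Fr\'echet--Montel property to make that step explicit is a useful addition rather than a departure.
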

\begin{proof}
    First we note that if $f \in C^\infty(\Tb_HM),$ then the function
    \[
        \lambda\to \alpha_{\lambda}^*f
    \]
    is smooth as a map from $\Rl_+$ to $C^\infty(\Tb_HM).$ This is immediate from the smoothness of $f$ and of $\alpha.$

    In local exponential coordinates $U\times V\times \Rl$ near a point $(x,h)\in M\times \Rl,$ we have
    \[
        ((\alpha_{\lambda})_*\Pb,f)(x,h) = \int_{\tf_HM_x} P(x,z,\lambda h)f(x,\delta_{\lambda}z,h)\,dz.
    \]
    where $P$ is a smooth function in $(x,h)$ and a compactly supported distribution in $z.$ Since $P(x,z,h)$ is supported in a compact set of $z \in V$ and $\lambda<1,$ it follows that only the values of $z\mapsto f(x,z,h)$ in a compact set of $z\in V$ are relevant. The smoothness \eqref{smoothness} and boundedness \eqref{boundedness} in these coordinates are immediate.

    Finally, we check \eqref{boundedness_2}. In local exponential coordinates, $\lambda^{-Q}(\alpha_{\lambda})_*\fbb$ is given by
    \[
        \lambda^{Q}(\alpha_{\lambda})_*\fbb(x,z,h) = f(x,\delta_{\lambda}^{-1}z,\lambda h)|dz|
    \]
    for some smooth function $f,$ compactly supported in the second variable. For any compact set $K\subset U\times V\times \Rl,$ it is clear that $(x,z,h)\mapsto f(x,\delta_{\lambda}^{-1}z,\lambda h)$ is bounded in $C^0(\Tb_HM)$ as $\lambda\to 0,$ and hence that $\lambda^Q(\alpha_{\lambda})_*\fbb$ is bounded in $C^0(\Tb_HM,\dens_r).$

\end{proof}

The vector field generating the zoom action will be important for defining the calculus.
\begin{definition}
    Denote by $Z$ the (negative) generator of the semigroup $\{\alpha_\lambda\}_{\lambda>0}$, which acts on functions by
    \[
        Zf := -\frac{d}{d\lambda}|_{\lambda=1}f\circ \alpha_\lambda,\quad f \in C^\infty(\Tb_HM).
    \]
\end{definition}
The Lie derivative by $Z$ acts on distributions by
\[
    \Lc_Z\Pb := \frac{d}{d\lambda}|_{\lambda=1}(\alpha_\lambda)_*\Pb,\quad \Pb\in \Ec'(\Tb_HM).
\]
Equivalently,
\[
    \langle \Lc_Z\Pb,f\rangle = \langle \Pb,-Zf\rangle,\quad f \in C^\infty(\Tb_HM).
\]
    We use the same notation for the derivative by $Z$ on $r$-fibred distributions, that is
    \[
        \Lc_Z:\Ec'_r(\Tb_HM)\to \Ec'_r(\Tb_HM)
    \]
    is defined by
    \[
        \Lc_Z\Pb := \frac{d}{d\lambda}|_{\lambda=1}(\alpha_\lambda)_*\Pb,\quad \Pb\in \Ec_r'(\Tb_HM).
    \]
    By Lemma \ref{smoothness_and_boundedness}, this derivative makes sense in the bounded convergence topology of $\Ec'_r(\Tb_HM).$
    Since $\alpha$ is a semigroup of automorphisms of the groupoid $\Tb_HM,$ $\Lc_Z$ is a derivation on the corresponding convolution algebra.

    We will denote by the same letter the vector field $Z$ on $T_HM$ that generates the canonical dilations, that is
    \[
        Zf = -\frac{d}{d\lambda}|_{\lambda=1}f\circ \delta_{\lambda},\quad f \in C^\infty(T_HM).
    \]
    Similarly, the Lie derivative by $Z$ acts on distributions $\fbb\in \Ec'(T_HM)$ by
    \[
        \langle \Lc_Z\fbb,f\rangle  = \langle \fbb,-Zf\rangle,\quad \fbb \in \Ec'(T_HM),\; f \in C^\infty(T_HM)
    \]
    and on $r$-fibred distributions $\Ec'_r(T_HM)$ by the same formula
    \[
        (\Lc_Z\fbb,f) = (\fbb,-Zf),\quad \fbb\in \Ec'_r(T_HM),\; f \in C^\infty(T_HM).
    \]
    This overlap in notation should not lead to confusion, because
    \begin{equation}\label{overlap_of_notation}
        (\Lc_Z\fbb)_0 = \Lc_Z(\fbb_0),\quad \fbb\in \Ec'_r(\Tb_HM).
    \end{equation}
    Here, the symbol $\Lc_Z$ on the left is the derivation on $\Ec'_r(\Tb_HM)$ while the $\Lc_Z$ on the right is the derivation on $\Ec'_r(T_HM),$ and $\fbb_0\in \Ec'_r(T_HM)$ is the restriction to $h=0$ from Definition \ref{restriction}.

    It will be important for us that any $r$-fibred distribution $\Pb \in \Ec'_r(\Tb_HM)$ can be reconstructed from $(\Lc_Z-m)^{k+1}\Pb,$ for any complex number $m$ with negative real part and any $k\geq 0.$ This reconstruction uses the fact that elements of $\Ec'_r(\Tb_HM)$ have $r$-proper support in an essential way.
    \begin{lemma}\label{integral_representation_general}
        Let $m\in \Cplx.$ For all $\Pb \in \Ec'_r(\Tb_HM)$ and $t>0,$ we have
        \begin{equation}\label{cocycle_integral_representation}
            \Pb-t^{-m}(\alpha_{t})_*\Pb = \int_t^1 (\alpha_{\lambda})_*((\Lc_Z-m)\Pb)\frac{d\lambda}{\lambda^{m+1}}.
        \end{equation}
        Here, the integral is taken in the bounded convergence topology. If $\Re(m)<0,$ then
        \begin{equation}\label{negative_order_integral_representation}
            \Pb = \int_0^1 (\alpha_{\lambda})_*((\Lc_Z-m)\Pb)\,\frac{d\lambda}{\lambda^{m+1}}.
        \end{equation}
        More generally, for all $k\geq 0$ we have
        \begin{align}
            \Pb &= \sum_{j=0}^k \frac{1}{j!}(-\log t)^jt^{-m}(\alpha_{t})_*((\Lc_Z-m)^j\Pb)\nonumber\\
                &\quad+\frac{1}{k!}\int_t^1 (-\log\lambda)^{k}(\alpha_{\lambda})_*((\Lc_Z-m)^{k+1}\Pb)\frac{d\lambda}{\lambda^{m+1}}\label{full_kth_order_integral_representation}
        \end{align}
        and for $\Re(m)<0,$
        \begin{equation}\label{kth_order_integral_representation}
            \Pb = \frac{1}{k!}\int_0^1 (-\log\lambda)^{k}(\alpha_{\lambda})_*((\Lc_Z-m)^{k+1}\Pb)\frac{d\lambda}{\lambda^{m+1}}.
        \end{equation}
    \end{lemma}
    \begin{proof}
        Denote $\kbb = (\Lc_Z-m)\Pb.$ By Lemma \ref{smoothness_and_boundedness}.\eqref{smoothness}, the function
        \[
            \lambda\mapsto (\alpha_{\lambda})_*\kbb
        \]
        is smooth as a map from $\Rl_+$ into $\Ec'_r(\Tb_HM)$ with the bounded convergence topology.
        Hence, for $0<\lambda_0<\lambda_1<\infty,$ the integral
        \[
            \int_{\lambda_0}^{\lambda_1} (\alpha_{\lambda})_*\kbb \frac{d\lambda}{\lambda^{m+1}}
        \]
        is meaningful as an $\Ec'_r(\Tb_HM,\dens_r)$-valued integral, see \cite{ThomasTAMS75} and also \cite[Appendix B and Appendix C]{HLP2018}.

        It follows from Thomas' vector valued fundamental theorem of calculus for all $0<\lambda_0<\lambda_1 <\infty$ that
        \[
            \lambda_1^{-m}(\alpha_{\lambda_1})_*\Pb-\lambda_0^{-m}(\alpha_{\lambda_0})_*\Pb = \int_{\lambda_0}^{\lambda_1} (\alpha_{\lambda})_*((\Lc_Z-m)\Pb) \frac{d\lambda}{\lambda^{m+1}}.
        \]
        Taking $\lambda_1=1$ and $\lambda_0=t$ delivers \eqref{cocycle_integral_representation}.

        By Lemma \ref{smoothness_and_boundedness}.\eqref{boundedness}, when $\Re(m)<0$ the integral
        \[
            \int_{0}^1 (\alpha_{\lambda})_*\kbb \frac{d\lambda}{\lambda^{m+1}}
        \]
        converges in the topology of bounded convergence of $\Ec'_r(\Tb_HM),$ see \cite[Theorem 3]{ThomasTAMS75}. Thus, we have
        \[
            \Pb-\lim_{t\to 0}t^{-m}(\alpha_t)_*\Pb = \int_0^1 (\alpha_{\lambda})_*\kbb \frac{d\lambda}{\lambda^{m+1}}
        \]
        where the limit is in the bounded convergence topology. Since $\lim_{t\to 0}t^{-m}(\alpha_t)_*\Pb$ must be homogeneous of order $m$ with $\Re(m)<0$ and is properly supported, we have $\lim_{t\to 0}t^{-m}(\alpha_t)^*\Pb=0.$ This proves \eqref{negative_order_integral_representation}.

        Finally, \eqref{kth_order_integral_representation} follows from \eqref{negative_order_integral_representation} iterated $k$ times and a change of variable, using the
        integral formula
        \[
            \int_{\mu}^1 \lambda^{-1}(-\log(\lambda))^{k-1}\,d\lambda = \frac{1}{k}(-\log(\mu))^k.
        \]
    \end{proof}
    The identities \eqref{negative_order_integral_representation} and \eqref{kth_order_integral_representation} are essentially the integral formula
    \[
        (\Lc_Z-m)^{-k-1} = \frac{1}{k!}\int_{0}^1 (-\log\lambda)^k\lambda^{\Lc_Z-m}\frac{d\lambda}{\lambda}
    \]
    with the understanding that $\lambda^{\Lc_Z} = (\alpha_{\lambda})_*.$ The important point in the proof is to show that the integral converges.

\subsection{The van Erp-Yuncken calculus}
    We will adopt a slightly different perspective to \cite{vanErpYuncken2019} and define pseudodifferential families using the zoom vector field $Z.$
    \begin{definition}\label{differential_psido_definition}
        Let $m\in \Cplx.$ An $r$-fibred distribution $\Pb\in \Ec'_r(\Tb_HM)$ is called a \emph{pseudodifferential family of order} $m\in \Cplx$ if
        \[
            (\Lc_Z-m)\Pb \in C^\infty_p(\Tb_HM,\dens_r).
        \]
        Denote the space of pseudodifferential families of order $m$ as $\Psib^m_H(M).$
    \end{definition}
    Note that it is an immediate consequence of the Leibniz rule that
    \[
        \Psib^n_H(M)\ast \Psib^m_H(M)\subseteq \Psib^{m+n}_H(M)
    \]
    where $\ast$ is the groupoid convolution on $\Tb_HM.$

    Definition \ref{differential_psido_definition} is consistent with that of \cite{vanErpYuncken2019}, due to the following lemma.
    \begin{lemma}\label{consistency}
        We have $\Pb \in \Psib^m_H(M)$ if and only if for all $\lambda>0,$ we have
        \[
            \fbb(\lambda) := \lambda^{-m}(\alpha_{\lambda})_*\Pb-\Pb \in C^\infty_p(\Tb_HM,\dens_r).
        \]
    \end{lemma}
    \begin{proof}
        If $\Pb \in \Psib^m_H(M),$ then \eqref{cocycle_integral_representation} delivers
        \[
            \fbb(\lambda) = \int_{1}^\lambda (\alpha_{t})_*\kbb \frac{dt}{t^{m+1}}
        \]
        where $\kbb \in C^\infty_p(\Tb_HM,\dens_r).$ This integral converges in the LF space $C^\infty_p(\Tb_HM,\dens_r),$ since the function $t\mapsto (\alpha_t)_*\kbb$ is smooth as a $C^\infty_p(\Tb_HM,\dens_r)$-valued function. It follows that $\fbb(\lambda)\in C^\infty_p(\Tb_HM,\dens_r).$

        Conversely, if $\fbb(\lambda)\in C^\infty_p(\Tb_HM,\dens_r),$ then $\lambda\to \fbb(\lambda)$ is smooth as a function from $\Rl_+$ into $C^\infty_p(\Tb_HM,\dens_r)$ \cite[Lemma 21]{vanErpYuncken2019}. Differentiating with respect to $\lambda$ at $\lambda=1$ delivers $(\Lc_Z-m)\Pb \in C^\infty_p(\Tb_HM,\dens_r).$
    \end{proof}

    The function $\fbb$ corresponding to a pseudodifferential kernel of order $m$ from Lemma \ref{consistency} is called the corresponding cocycle, and satisfies the identity
    \begin{equation}\label{cocycle_identity}
        \fbb(\lambda\mu) = \fbb(\lambda)+\lambda^{-m}(\alpha_{\lambda})_*\fbb(\mu),\quad \lambda,\mu>0.
    \end{equation}

    With this preparation we recall van Erp--Yuncken's definition of their pseudodifferential calculus.
    \begin{definition}\label{psido_operator_definition}
        Let $(M,H)$ be a Carnot manifold. Let $r:M\times M\to M$ be the first component projection. An $r$-fibred distribution $P\in \Ec'_{r}(M\times M)$ is called a pseudodifferential operator of order $m\in \Cplx$
        if there exists $\Pb\in \Psib_H^m(M)$ such that $P=\Pb_1.$ Denote the space of pseudodifferential operators of order $m$ as $\Psi^m_H(M).$

        We denote $\Psi^{-\infty}_H(M):=\bigcap_{m\in \Cplx} \Psi^m_{H}(X).$ The set of all pseudodifferential operators is denoted $\Psi_H(M) := \bigcup_{m\in \Cplx} \Psi_H^m(M).$
    \end{definition}
    Van Erp and Yuncken have proved that $\Psi_H(M)$ has the desirable properties of a pseudodifferential calculus. In particular, $\Psi_H(M)$ is closed under operator composition, taking adjoints and consists of distributions whose wave-front set is conormal to the diagonal submanifold in $M\times M.$ In the trivially filtered case this construction reproduces the algebra of properly supported classical pseudodifferential operators on $M.$

    Note that $\Psi^{-\infty}_H(M)=C^\infty_p(M\times M,\dens_{r})$ is exactly the space of properly supported smoothing operators \cite[Corollary 53]{vanErpYuncken2019}.

\subsection{The log-polyhomogeneous calculus}\label{log_poly_subsection}
    As a straightforward extension of Definition \ref{differential_psido_definition}, we define order $k$ log-polyhomogeneous pseudodifferential familes as follows:
    \begin{definition}
        Let $k\geq 0$ be an integer, and let $m\in \Cplx.$ An $r$-fibred distribution $\Pb \in \Ec'_r(\Tb_HM)$ is called a $k$-log-polyhomogeneous pseudodifferential familiy if
        \[
            (\Lc_Z-m)^{k+1}\Pb \in C^\infty_p(\Tb_HM,\dens_r).
        \]
        The space of $k$-log-polyhomogeneous pseudodifferential families is denoted $\Psib^{m,k}_H(M).$
        An $r$-fibred distribution $P \in \Ec_r'(M\times M)$ is called a $k$-log-polyhomogeneous pseudodifferential operator of order $m$ if there exists $\Pb \in \Psib^{m,k}_H(M)$ for which $P = \Pb_1.$

        We will denote
        \[
            \Psib^{m,-1}_H(M) := C^\infty_p(\Tb_HM,\dens_r)
        \]
        and $\Psi^{m,-1}_H(M) = C^\infty(M\times M,\dens_r).$
    \end{definition}
    The fact that this definition reproduces Lesch's $k$-log-polyhomogeneous pseudodifferential operators when the filtration $H$ is trivial is essentially Theorem \ref{logpolyhomogeneous_characterisation} in the appendix.

    It is immediate that
    \[
        \Psib^{m,k-1}_H(M) \subseteq \Psib^{m,k}_H(M),\quad \Psi^{m,k-1}_H(M)\subseteq \Psi^{m,k}_H(M),\quad k\geq 0,\; m\in \Cplx.
    \]
    Observe that we could have defined $\Psib^{m,k}_{H}(M)$ inductively by
    \[
        \Psib^{m,k}_H(M) = \{\Pb \in \Ec_r'(\Tb_HM)\;:\; (\Lc_Z-m)\Pb \in \Psib^{m,k-1}(\Tb_HM)\},\quad k\geq 0.
    \]

    \begin{remark}
        We have $\Pb \in \Psib^{m,k}_H(M)$ if and only if for any $(k+1)$-tuple $\lambda_0,\ldots,\lambda_k,$ we have
        \[
            \sum_{n=0}^k \binom{k}{n}(-1)^{n-k}\lambda_n^{-m}(\alpha_{\lambda_n})_*\Pb \in C^{\infty}_p(\Tb_HM,\dens_r).
        \]
    \end{remark}


    The calculus $\Psi^{m,k}_{H}(M)$ is bigraded by $m$ and $k,$ as the following lemma shows.
    \begin{lemma}
        For all $m,n\in \Cplx$ and $k,\ell\geq -1,$ we have
        \[
            \Psib^{m,k}_H(M)\ast \Psib^{n,\ell}_H(M) \subseteq \Psib^{m+n,k+\ell}_H(M),\quad n,m\in \Cplx,\; k,\ell \geq 0
        \]
        and
        \[
            \Psi^{m,k}_H(M)\cdot \Psi^{n,\ell}_H(M) \subseteq \Psi^{m+n,k+\ell}_H(M)
        \]
        where $\cdot$ is the composition of operators.
    \end{lemma}
    \begin{proof}
        Let $\Pb\in \Psib^{m,k}_H(M)$ and $\Qb \in \Psib^{n,k}_H(M).$ By the Leibniz rule
        \begin{align*}
            (\Lc_Z-m-n)^{k+\ell+1}(\Pb\ast \Qb) &= \sum_{j=0}^{k+\ell+1} \binom{k+\ell+1}{j} ((\Lc_Z-m)^{k+\ell+1-j}\Pb)\ast ((\Lc_Z-n)^{j}\Qb)\\
                                                &= \sum_{j=0}^{\ell} \binom{k+\ell+1}{\ell-j}((\Lc_Z-m)^{k+1+j}\Pb)\ast ((\Lc_Z-n)^{\ell-j}\Qb)\\
                                                &\quad + \sum_{j=0}^{k} \binom{k+\ell+1}{\ell+1+j}((\Lc_Z-m)^{k-j}\Pb)\ast ((\Lc_Z-n)^{\ell+1+j}\Qb).
        \end{align*}
        In the first sum, the left factors are in the ideal $C^{\infty}_p(\Tb_HM,\dens_r),$ while in the second sum the right factors are in the ideal $C^\infty_p(\Tb_HM,\dens_r).$ So we conclude
        \[
            (\Lc_Z-m-n)^{k+\ell+1}(\Pb\ast \Qb) \in C^\infty_p(\Tb_HM,\dens_r).
        \]
        Since
        \[
            (\Pb \ast \Qb)_1 = \Pb_1\cdot \Qb_1
        \]
        the second statement follows at once.
    \end{proof}

\subsection{The principal symbol mapping and $H$-elliptic operators}
    By definition, an operator $P$ belongs to $\Psi^{m,k}_H(M)$ if there exists $\Pb\in \Psib^{m,k}_H(M)$ for which $P = \Pb_1.$ The choice $\Pb$ is not unique -- we can easily find nonzero $\Qb\in C^{\infty}_p(\Tb_HM,\dens_r)$ with $\Qb_1=0,$ and hence $(\Pb+\Qb)_1=P.$

%

    \begin{definition}
        Let
        \[
            \Sigmab^{m,k}_H(M) := \{T \in \Ec'_r(T_HM)\;:\;(\Lc_Z-m)^{k+1}T \in C^\infty_p(T_HM,\dens_r)\},\quad k\geq -1
        \]
        and
        \[
            \Sigma^{m,k}_H(M) := \Sigmab^{m,k}_H(M)/\Sigmab^{m,k-1}_H(M),\quad k\geq 0.
        \]
    \end{definition}

    The following lemma is the log-polyhomogeneous version of \cite[Lemma 36]{vanErpYuncken2019}, and can be proved identically using exponential cut-offs.
    \begin{lemma}\label{surjectivity_of_principal_symbol}
        For any $T \in \Sigmab^{m,k}_H(M),$ there exists $\Pb \in \Psib^{m,k}_H(M)$ such that $\Pb_0=T.$
    \end{lemma}
    Lemma \ref{surjectivity_of_principal_symbol} implies that if $\iota_m$ is the ``multiplication by $h$" map
    \[
        \iota_h:\Psib^{m-1,k}_H(M)\to \Psib^{m,k}_H(M)
    \]
    then $\iota_h$ induces an exact sequence
    \[
        0\rightarrow \Psib^{m-1,k}_H(M)\rightarrow \Psib^{m,k}_H(M)\rightarrow \Sigmab^{m,k}_H(M)\rightarrow 0.
    \]
    Standard homological algebra (the so-called ``nine lemma") implies that there is an induced exact sequence
    \[
        0\rightarrow \Psib^{m-1,k}_H(M)/\Psib^{m-1,k-1}_H(M)\rightarrow \Psib^{m,k}_H(M)/\Psib^{m,k-1}_H(M)\rightarrow \Sigma^{m,k}_H(M)\rightarrow 0.
    \]

%

    The following is the log-polyhomogeneous extension of \cite[Lemma 32]{vanErpYuncken2019}.
    \begin{lemma}\label{uniqueness_of_extension}
        Let $\Pb\in \Psib^{m,k}_H(M),$ for $m\in \Cplx$ and $k\geq 0.$ If $\Pb_1 \in \Psi^{m,k-1}_H(M),$ then ${\Pb_0 \in \Sigmab_H^{m,k-1}(M).}$
    \end{lemma}
    \begin{proof}
        By definition, $\Pb_1\in \Psi^{m,k-1}_H(M)$ if and only if there exists $\Qb \in \Psib^{m,k-1}_H(M)$ with $\Pb_1=\Qb_1.$ Replacing $\Pb$ by $\Pb-\Qb,$ it suffices to consider the case $\Pb_1=0.$ In local exponential coordinates, we have
        \[
            \Pb(x,z,h) = P(x,z,h)|dz|
        \]
        where $|dz|$ is the Lebesgue density. Denote $f_1,\ldots,f_{k+1}$ the distributions such that
        \[
            ((\Lc_Z-m)^{j}\Pb)(x,z,h) = f_j(x,z,h)dz.
        \]
        We use the integral representation \eqref{full_kth_order_integral_representation}, which in this notation gives
        \[
            P(x,z,h)-\sum_{j=0}^k \frac{1}{j!}(-\log t)^jt^{-m-Q}f_j(x,\delta_{t}^{-1}z,th) = \frac{1}{k!}\int_t^1 (-\log \lambda)^{k}\lambda^{-m-Q-1}f_{k+1}(x,\delta_{\lambda}^{-1}z,\lambda h)\,d\lambda.
        \]
        Taking $h=1$ (and using the assumption that $P(x,z,1)=0$) it follows that
        \[
            \sum_{j=0}^k \frac{1}{j!}(-\log t)^jf_j(x,z,t) = -\frac{1}{k!}\int_1^{1/t} (-\log(t\lambda))^k\lambda^{-m-Q-1}f_{k+1}(x,\delta_{\lambda}^{-1}z,t\lambda)\,d\lambda.
        \]
        This delivers
        \[
            f_k(x,z,t) = -\int_1^{1/t} (1+\frac{\log t}{\log \lambda})^k\lambda^{-m-Q-1}f_{k+1}(x,\delta_{\lambda}^{-1}z,t\lambda)\,d\lambda+o(1)
        \]
        where the $o(1)$ term vanishes in the topology of $\Ec'_r(T_HM)$ as $t\to 0.$ The integral term forms a bounded family in $C^{\infty}(K)$ for any $r$-proper subset $K$ of $T_HM,$ which implies by the Arzela-Ascoli theorem that there exists a limit point as $t\to 0.$ Since $f_k(x,z,t)$ converges in the sense of distribution as $t\to 0,$ we conclude that $f_k(\cdot,\cdot,0)|_K\in C^{\infty}(K).$
        That is,
        \[
            \Pb_0 \in \Sigmab_H^{m,k-1}(M).
        \]
%
    \end{proof}

    Lemma \ref{uniqueness_of_extension} implies the following theorem, which asserts that $\Sigma^{m,k}_H(M)$ is the correct principal symbol space for $\Psi^{m,k}_H(M).$
    \begin{theorem}
        There is a short exact sequence
        \[
            0\rightarrow \Psi^{m-1,k}_H(M)\rightarrow \Psi^{m,k}_H(M)\rightarrow \Sigma^{m,k}_H(M)\rightarrow 0
        \]
        where the left arrow is the inclusion and the right arrow is the map
        \[
            \sigma_m(P) := \Pb_0+\Sigmab^{m,k-1}_H(M)
        \]
        where $\Pb \in \Psib^{m,k}_H(M)$ is such that $\Pb_1=P.$
    \end{theorem}

%
%
%

\subsection{The local trace}
    We will make significant use of the ``local trace" or ``restriction to the unit space" map that restricts a distribution on the $H$-tangent groupoid to a distribution on the unit space $M\times \Rl.$ The unit space embedding
    \[
        \iota:M\times \Rl\to \Tb_HM
    \]
    is given by
    \[
        \iota(x,h) = \begin{cases} (x,x,h)\text{ if }h\neq 0\\
                                    ((x,0),0)\text{ if }h=0.
                      \end{cases}
    \]
    The unit space embedding pulls back to a restriction map on smooth functions
    \[
        \iota^*:C^\infty(\Tb_HM)\to C^\infty(M\times \Rl).
    \]
    What we need is not $\iota^*$ but instead the corresponding map on the bundle of densities tangent to the $r$-fibres.
    \begin{definition}
        Let $\evz$ denote the restriction to the image of $\iota.$ That is,
        \[
            \evz:C^\infty(\Tb_HM,\dens_r)\to C^\infty(M\times \Rl,\iota^*\dens_r)
        \]
        where $\iota^*\dens_r$ is the pullback of the bundle $\dens_r$ to $M\times \Rl$ along the unit space embedding $\iota.$
    \end{definition}
    To make the definition of $\evz$ more useful, we need to be more explicit about the bundle $\iota^*\dens_r\to M\times \Rl.$
    The splitting $\psi$ (as in Subsection \ref{subsec_groupoid_def}) induces an isomorphism of $TM$ with the vertical sub-bundle $V\tf_HM\subset T(\tf_HM),$
    Let $\ell:M\times \Rl\to M$ be the projection onto the first component. The isomorphism $TM\approx V(\tf_HM)$ induces an isomorphism
    \[
        j_{\psi}:\ell^*TM\to T(\tf_HM\times \Rl) \approx T(\tf_HM)\oplus T\Rl
    \]
    with the property that $j_{\psi}$ is an isomorphism onto the subspace
    \[
        V(\tf_HM)\oplus \{0\}\subset T(\tf_HM\times \Rl).
    \]

    \begin{lemma}\label{bundle_identification}
        Let
        \[
            j_{\psi}:\ell^*(TM)\to T(\tf_HM\times \Rl)
        \]
        be the embedding described above. Let
        \[
            \Gexp^{\psi}:\tf_HM\times \Rl \to \Tb_HM
        \]
        be a global exponential coordinate map. Then $j_{\psi}$ is an isomorphism
        \[
            j_{\psi}:\ell^*(TM)\to (\Gexp^{\psi})^*\ker(dr)
        \]
        on a neighbourhood of $M\times \{0\}$ in $M\times \Rl.$
    \end{lemma}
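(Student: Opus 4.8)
The plan is to verify the claimed isomorphism purely locally, working in a local exponential coordinate chart, where everything becomes an explicit computation about the differential of $\Gexp^\psi$ along the $h=0$ slice. First I would fix a local trivialisation $\tf_H U \approx U\times V$ with $V=\Rl^d$ a graded vector space, so that the global exponential map restricts to $\mathrm{Exp}^\psi(x,z,h) = (x,\exp_x^\psi(\delta_h z),h)$ on $U\times V\times(-\varepsilon,\varepsilon)$. The bundle $\ker(dr)$ over $\Tb_HM$ is, in these coordinates, spanned by the $\partial_{z}$-directions (the $\partial_x$ and $\partial_h$ directions are killed by $dr$ for $h\neq 0$, since $r(x,y,h)=x$, and by continuity at $h=0$ as well — at $h=0$ the range map is $r((x,z),0)=x$, so $\ker(dr)|_{h=0}$ is $T(\tf_HM_x)$, the fibre directions). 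So the target bundle $(\Gexp^\psi)^*\ker(dr)$ is, over $(x,0)\in M\times\Rl$, the pullback of $T(\tf_HM_x)$, which is $V$ as a vector space, i.e. exactly $V(\tf_HM)\oplus\{0\}$ as asserted, and over nearby $(x,h)$ with $h\neq 0$ it is the span of $\partial_y$-directions at $\exp_x^\psi(\delta_h z)$ pulled back via $\mathrm{Exp}^\psi$.

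The key computation is then: $j_\psi$ was defined as the composite $\ell^*TM \xrightarrow{\sim} V(\tf_HM)\oplus\{0\}\hookrightarrow T(\tf_HM\times\Rl)$, the first arrow coming from the splitting-induced identification $TM\approx V(\tf_HM)$. I must show that this map, post-composed with $d\Gexp^\psi$, lands in and surjects onto $\ker(dr)$ near $M\times\{0\}$. At $h=0$ this is immediate: $d\Gexp^\psi|_{h=0}$ is the identity on fibre directions (since $\Gexp^\psi((x,z),0)=((x,z),0)$), so $j_\psi$ at $h=0$ is literally the inclusion $V(\tf_HM)\hookrightarrow\ker(dr)|_{h=0}$, an isomorphism. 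For $h\neq 0$ near $0$ I would argue by an open-condition / continuity argument: the map $d\Gexp^\psi\circ j_\psi$ restricted to $\ell^*TM$ is a continuous bundle map from $\ell^*TM$ (rank $d$) to $T(\tf_HM\times\Rl)$, whose image is contained in $\ker(dr)$ (rank $d$) because $r\circ\Gexp^\psi(x,z,h)=x$ for all $h$ so $dr\circ d\Gexp^\psi$ annihilates the $z$-directions, hence annihilates the image of $j_\psi$ which by construction involves only the $z$- and fibre-directions, not $\partial_x$; and being an isomorphism onto $\ker(dr)$ at $h=0$, it remains an isomorphism on a neighbourhood since "a square matrix is invertible" is an open condition and $\tf_HM\times\Rl$ is locally compact, so the neighbourhood can be chosen uniformly in $x$ over compact subsets, then globally by a partition of unity / paracompactness argument on $M$.

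The main obstacle, and the one place requiring care rather than bookkeeping, is checking that the image of $d\Gexp^\psi\circ j_\psi$ actually lies inside $\ker(dr)$ for $h\neq 0$ and not merely at $h=0$ — i.e. that the $\partial_h$-component and $\partial_x$-component of $d\Gexp^\psi(j_\psi(v))$ vanish identically in $h$. The $\partial_h$-component vanishes because $j_\psi$ was constructed to map into $V(\tf_HM)\oplus\{0\}$, which has no $T\Rl$-component, and $d\Gexp^\psi$ preserves the $h$-coordinate (so it cannot create a $\partial_h$-component out of a vector with none — more precisely, $\Gexp^\psi$ is fibred over the $h$-axis). The $\partial_x$-component requires using that $\exp^\psi_x(\delta_h z)$ depends on $x$ but that $r=$ projection to the first factor $x$ is independent of this: indeed $r(\Gexp^\psi(x,z,h))=x$ exactly, so $dr\circ d\Gexp^\psi = d(\text{pr}_x)$, and $j_\psi(v)$ has zero $\partial_x$-component by construction, giving $dr\circ d\Gexp^\psi\circ j_\psi = 0$. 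Once this containment is established, the rank count plus the $h=0$ base case plus openness of invertibility closes the argument. I would present the $h=0$ identification and the containment as the two explicit lemmatic steps, and the extension to a neighbourhood as a short continuity remark, since it is standard.
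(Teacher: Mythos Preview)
Your argument is correct and contains the decisive observation, namely that $r\circ\Gexp^{\psi}(x,z,h)=(x,h)$ (you write $=x$, but $r$ is valued in $M\times\Rl$; this slip is harmless since $j_\psi$ has no $\partial_h$-component anyway). The paper's proof is essentially the same but shorter: once you know $r\circ\Gexp^{\psi}$ is the projection $(x,z,h)\mapsto(x,h)$, the pullback $(\Gexp^{\psi})^*\ker(dr)$ equals $\ker\big(d(r\circ\Gexp^{\psi})\big)=V(\tf_HM)\oplus\{0\}$ \emph{identically} on the whole domain of $\Gexp^{\psi}$, not just at $h=0$, and $j_\psi$ is an isomorphism onto this by construction. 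Your separate treatment of $h=0$ versus $h\neq 0$, the containment check, and the openness-of-invertibility argument are therefore all redundant: you already have the identification for every $h$ in the domain the moment you write down $r\circ\Gexp^{\psi}$. The detour comes from describing $(\Gexp^{\psi})^*\ker(dr)$ at $h\neq 0$ as ``$\partial_y$-directions pulled back,'' which is correct but obscures that in the exponential coordinates these are exactly the $\partial_z$-directions again.
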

    \begin{proof}
        In the global exponential coordinates, the range map is the projection
        \[
            ((x,z),h)\to (x,h),\quad x\in M,\, z\in \tf_HM.
        \]
        and so the kernel of $dr$ is the subbundle of $T(\tf_HM\times \Rl) = T(\tf_HM)\oplus T\Rl$ given by
        \[
            V(\tf_HM)\oplus \{0\}.
        \]
        So, in these coordinates $j_{\psi}$ is an isomorphism of $\ell^*(TM)$ with $\ker(dr)$ by construction.
    \end{proof}

    Recall that a splitting is a bundle isomorphism
    \[
        \psi:\tf_HM\to TM
    \]
    with the property that $\psi|_{\tf_HM^k}$ is right-inverse to the quotient map $H_k\to H_{k-1}$ for all $k=1,...,N.$ Observe that if $\psi_1,\psi_2$ are two splittings,
    then $\psi_2^{-1}\psi_1$ is upper block-diagonal in the sense that
    \[
        \psi_2^{-1}\psi_1(\tf_HM^k) \subset \tf_HM^k\oplus \tf_HM^{k-1}\oplus\cdots \tf_HM^1,\quad k=1,\ldots,N.
    \]
    Moreover since $\psi_1$ is right-inverse to the quotient $H_k\to H_{k-1},$ the compression of $\psi_2^{-1}\psi_1$ to $\tf_HM^k$ is the identity map. It follows from these facts that the determinant of $\psi_2^{-1}\psi_1$ is identically $1.$ Hence the isomorphism in Lemma \ref{bundle_identification} induces an isomorphism
    \[
        \ell^*\dens M\approx \dens \ker(dr)
    \]
    which is independent of the splitting $\psi.$

    Henceforth we will identify $\evz$ with the associated map
    \[
        \evz:C^\infty(\Tb_HM,\dens_r)\to C^\infty(M\times \Rl,\ell^*\dens M).
    \]
    We call $\evz$ the \emph{local trace}. For $h\in \Rl,$ we denote by $\evz_h$ the evaluation of $\evz$ at $h\in \Rl,$ so that
    \[
        \evz_h:C^\infty(\Tb_HM,\dens_r)\to C^\infty(M,\dens M).
    \]
    Moreover $\evz_h$ is supported on the $h$-fibre of $\Tb_HM,$ so that the family $\{\evz_h\}_{h\in \Rl}$ interpolates between the local trace of an integral operator for $h\neq 0,$
    and the Liouville trace on the convolution algebra of $T_HM$ at $h=0.$

    The following relation is fundamental.
    \begin{lemma}\label{trace_scaling_lemma}
        Let $\fbb \in C^\infty_p(\Tb_HM,\dens_r).$ For all $\lambda>0$ and $h\in \Rl,$ we have
        \[
            \evz_h((\alpha_{\lambda})_*\fbb) = \lambda^{-\dhom}\evz_{\lambda h}(\fbb).
        \]
    \end{lemma}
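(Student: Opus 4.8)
The plan is to reduce the whole statement to an explicit computation in a local exponential coordinate chart, in which $\evz$, the zoom action $\alpha_\lambda$ and the restriction to the unit section are all completely transparent; the only real content is the density bookkeeping.

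First I would observe that $\evz_h(\fbb)$ depends only on the germ of $\fbb$ along the unit section $\iota(M\times\Rl)$, and that $\alpha_\lambda$ is a diffeomorphism of $\Tb_HM$ which carries the image of $\iota$ onto itself while covering the map $(x,h)\mapsto(x,\lambda^{-1}h)$ on the base $M\times\Rl$. It therefore suffices to prove the identity for $\fbb$ supported near the diagonal inside the image of a single local exponential coordinate chart $U\times V\times(-H,H)$, with $H$ as large as we wish; here $\Gexp^\psi((x,z),h)=(x,\exp^\psi_x(\delta_h z),h)$ makes sense for $z$ small and $h$ bounded, and includes $h=0$. In such a chart an $r$-fibred density is written $f(x,z,h)\,|dz|$ with $f$ smooth and compactly supported in $z$. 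By the construction of the local trace through the splitting $\psi$ and Lemma~\ref{bundle_identification} — in exponential coordinates $\ker(dr)$ is spanned by the $z$-directions, and $j_\psi$ identifies $\dens\ker(dr)$ with $\ell^*\dens M$ in a way not depending on $h$ — the local trace $\evz_h(\fbb)$ is represented in the chart by $f(x,0,h)$, times a density on $U$ that is independent of both $h$ and $\lambda$.

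Next I would recall from the proof of Lemma~\ref{smoothness_and_boundedness} that in exponential coordinates $\alpha_\lambda$ is the map $(x,z,h)\mapsto(x,\delta_\lambda z,\lambda^{-1}h)$, so that pushing the density $\fbb=f(x,z,h)|dz|$ forward gives
\[
    (\alpha_\lambda)_*\fbb \;=\; \lambda^{-\dhom}\, f\big(x,\delta_\lambda^{-1}z,\lambda h\big)\,|dz|,
\]
the scalar $\lambda^{-\dhom}$ being exactly the Jacobian factor produced when a density is pushed forward under the fibrewise dilation $\delta_\lambda$ (recall $\det\delta_\lambda=\lambda^{\dhom}$). Restricting this representative to the unit section $z=0$ and comparing with the description of $\evz$ from the previous paragraph, $\evz_h((\alpha_\lambda)_*\fbb)$ is represented by $\lambda^{-\dhom}f(x,0,\lambda h)$, which is precisely $\lambda^{-\dhom}$ times the representative of $\evz_{\lambda h}(\fbb)$; the density-on-$M$ factor is the same on both sides because $\alpha_\lambda$ is trivial in the $M$-directions. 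This establishes the identity in the chart, and patching over a cover of a neighbourhood of the diagonal (for $h$ ranging over any compact set, hence over all of $\Rl$) yields the global statement.

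The genuine point requiring care — and essentially the only one — is the direction of the exponent in the Jacobian. One must use the identification $\iota^*\dens_r\cong\ell^*\dens M$ that actually defines $\evz$, namely the $h$-independent one coming from $j_\psi$ in exponential coordinates, so that all of the $\lambda$-scaling is carried by the fibre variable $z$, on which $\alpha_\lambda$ acts by $\delta_\lambda$ with determinant $\lambda^{\dhom}$. Using instead the ``naive'' identification obtained by regarding the $r$-fibre over $(x,h)$ with $h\neq0$ literally as a copy of $M$ would be inconsistent: that identification differs from the correct one by a power of $|h|$, and only the $j_\psi$-identification produces the clean factor $\lambda^{-\dhom}$ valid uniformly down to $h=0$.
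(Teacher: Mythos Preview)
Your proof is correct and follows essentially the same approach as the paper: work in a local exponential coordinate chart, write $\fbb=f(x,z,h)\,|dz|$, compute $(\alpha_\lambda)_*\fbb=\lambda^{-\dhom}f(x,\delta_\lambda^{-1}z,\lambda h)\,|dz|$, and restrict to $z=0$. The paper's proof is somewhat terser, but your additional remarks on why the $j_\psi$-identification of $\iota^*\dens_r$ with $\ell^*\dens M$ is the correct one (and $h$-independent) are accurate and make explicit a point the paper leaves implicit.
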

    \begin{proof}
        Let $U\times V\times \Rl$ be a local exponential coordinate chart, where $U\subset M.$ In these coordinates, the unit space inclusion is
        \[
            \iota:U\times \Rl\to U\times V\times \Rl,\quad \iota(x,h) = (x,0,h)
        \]
        and in this chart we can write
        \[
            \fbb(x,z,h) = f(x,z,h)|dz|
        \]
        for some smooth function $f$ on $U\times V\times\Rl,$ where $dz$ is a density representing Lebesgue measure on $V.$ We have
        \[
            (\alpha_{\lambda})_*\fbb(x,z,h) = \lambda^{-\dhom}f(x,\delta_{\lambda}^{-1}z,\lambda h)|dz|
        \]
        hence
        \[
            (\iota^*\alpha_{\lambda})_*\fbb)(x,h) = \lambda^{-\dhom}f(x,0,\lambda h)\iota^*|dz|.
        \]
        On the other hand,
        \[
            \iota^*\fbb(x,\lambda h) = f(x,0,\lambda h)\iota^*|dz|.
        \]
        so we have
        \[
            \evz_h((\alpha_{\lambda})_*\fbb) = \lambda^{-\dhom}f(x,0,\lambda h)\iota^*|dz| = \lambda^{-\dhom}\evz_{\lambda h}(\fbb).
        \]
    \end{proof}

    Note that Lemma \ref{trace_scaling_lemma} includes the fact that
    \[
        \evz_0((\alpha_\lambda)_*\fbb) = \lambda^{-\dhom}\evz_0(\fbb),\quad \fbb\in C^\infty_p(\Tb_HM,\dens_r).
    \]
    Differentiating this relation with respect to $\lambda$ delivers the identity
    \[
        \evz_0(\Lc_Z\fbb) = -\dhom \evz_0(\fbb),\quad \fbb\in C^\infty_p(\Tb_HM,\dens_r).
    \]
    More generally, taking $\lambda=1+\varepsilon$ and identifying coefficients of $\varepsilon$ in the Taylor expansion of both sies of \eqref{trace_scaling_lemma}, for all $k\geq 1$ we have
    \begin{equation}\label{higher_derivative_relation}
        h^k\evz^{(k)}_h(\fbb) = \evz_h((\Lc_Z+\dhom)(\Lc_Z+\dhom-1)\cdots(\Lc_Z+\dhom-k-1)\fbb).
    \end{equation}

    The local trace map extends to pseudodifferential kernels with sufficiently negative order.
    \begin{lemma}\label{trace_on_trace_class}
        If $\Re(m)<-\dhom$ and $k\geq 0$ there is a unique continuous map
        \[
            \evz:\Psib^{m,k}_H(M)\to C^0(M\times \Rl,\dens M)
        \]
        extending the definition of $\evz$ on $C^\infty_p(\Tb_HM,\dens_r).$ Similarly, for $h\in \Rl,$ we have a continuous linear map
        \[
            \evz_h:\Psib^{m,k}_H(M)\to C^0(M,\dens M).
        \]
    \end{lemma}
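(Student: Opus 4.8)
The plan is to use the integral representation \eqref{negative_order_integral_representation} from Lemma \ref{integral_representation} to reduce the problem to the known behaviour of $\evz$ on $C^\infty_p(\Tb_HM,\dens_r)$ together with the scaling identity in Lemma \ref{trace_scaling_lemma}. Given $\Pb \in \Psib^m_H(M)$ with $\Re(m)<-\dhom$, write $\kbb = (\Lc_Z-m)\Pb \in C^\infty_p(\Tb_HM,\dens_r)$, so that $\Pb = \int_0^1 (\alpha_\lambda)_*\kbb \,\frac{d\lambda}{\lambda^{m+1}}$ with convergence in the bounded convergence topology of $\Ec'_r(\Tb_HM)$. The idea is to apply $\evz_h$ under the integral sign. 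By Lemma \ref{trace_scaling_lemma} we have $\evz_h((\alpha_\lambda)_*\kbb) = \lambda^{-\dhom}\evz_{\lambda h}(\kbb)$, so formally
\[
    \evz_h(\Pb) = \int_0^1 \lambda^{-\dhom}\evz_{\lambda h}(\kbb)\,\frac{d\lambda}{\lambda^{m+1}} = \int_0^1 \lambda^{-\dhom-m-1}\evz_{\lambda h}(\kbb)\,d\lambda.
\]
Since $\kbb \in C^\infty_p(\Tb_HM,\dens_r)$, the map $\lambda \mapsto \evz_{\lambda h}(\kbb)$ is continuous and bounded (in fact smooth) as a $C^0(M,\dens M)$-valued, indeed $C^\infty(M,\dens M)$-valued, function of $\lambda$ on $[0,1]$, and the exponent $-\dhom - \Re(m) - 1 > -1$ makes $\lambda^{-\dhom-m-1}$ integrable near $\lambda = 0$. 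Hence the right-hand side converges absolutely in $C^0(M,\dens M)$ and defines a continuous linear functional of $\Pb$; this is the desired extension of $\evz_h$, and assembling over $h$ in a fixed compact interval gives the extension of $\evz$ to a $C^0(M\times\Rl,\dens M)$-valued map.

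To make this rigorous rather than formal, I would proceed in the following steps. First, establish that for $\fbb \in C^\infty_p(\Tb_HM,\dens_r)$ the assignment $\fbb \mapsto \evz_h(\fbb)$ is continuous from the LF-space topology into $C^0(M,\dens M)$ — this is immediate from the local coordinate formula, where $\evz_h$ is just restriction of a smooth density to $z=0$. Second, observe that by Lemma \ref{smoothness_and_boundedness}\eqref{boundedness_2} the rescaled family $\{\lambda^{\dhom}(\alpha_\lambda)_*\kbb\}_{0<\lambda\le 1}$ is bounded in $C^0(\Tb_HM,\dens_r)$, which, combined with the smoothness of $\lambda \mapsto (\alpha_\lambda)_*\kbb$ as a $C^\infty_p$-valued map, guarantees that $\lambda \mapsto \lambda^{-\dhom-m-1}(\alpha_\lambda)_*\kbb$ is integrable on $(0,1]$ as a $C^0(\Tb_HM,\dens_r)$-valued function once $\Re(m)<-\dhom$. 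Third, since $\evz_h$ is continuous and linear on $C^0(\Tb_HM,\dens_r)$, it commutes with the Bochner-type integral, yielding $\evz_h(\Pb) = \int_0^1 \lambda^{-\dhom-m-1}\evz_{\lambda h}(\kbb)\,d\lambda$. Fourth, uniqueness of the extension follows because $C^\infty_p(\Tb_HM,\dens_r)$ is dense in $\Psib^m_H(M)$ in an appropriate sense — more directly, any continuous extension must satisfy the same integral formula, since every $\Pb\in\Psib^m_H(M)$ is recovered from its image under $\Lc_Z - m$ via \eqref{negative_order_integral_representation}, which pins down $\evz(\Pb)$ uniquely.

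The main obstacle is the interchange of $\evz_h$ with the integral near $\lambda = 0$: a priori $\evz_h$ is only defined on smooth fibred densities, and while each $(\alpha_\lambda)_*\kbb$ for $\lambda>0$ lies in $C^\infty_p(\Tb_HM,\dens_r)$, one must control the $C^0$-norm of $\evz_h((\alpha_\lambda)_*\kbb) = \lambda^{-\dhom}\evz_{\lambda h}(\kbb)$ uniformly enough to justify that the vector-valued integral lands in $C^0(M,\dens M)$ and that $\evz_h$ — which is not a priori bounded on all of $\Psib^m_H(M)$ — may be applied termwise. This is exactly where the hypothesis $\Re(m)<-\dhom$ enters: it is the precise condition making $\int_0^1 \lambda^{-\dhom-\Re(m)-1}\,d\lambda < \infty$, so the $C^0(M,\dens M)$-valued integrand $\lambda \mapsto \lambda^{-\dhom-m-1}\evz_{\lambda h}(\kbb)$ is absolutely integrable using the boundedness of $\lambda\mapsto\evz_{\lambda h}(\kbb)$ on $[0,1]$. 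Once this integrability is in place, continuity of $\Pb \mapsto \evz_h(\Pb)$ with respect to a suitable seminorm on $\Psib^m_H(M)$ (controlling $\kbb$ in $C^0$, say) follows, and continuity in $h$ on compacta is inherited from the joint continuity of $(\lambda,h)\mapsto\evz_{\lambda h}(\kbb)$.
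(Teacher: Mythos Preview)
Your proposal is correct and follows essentially the same route as the paper: both arguments write $\kbb=(\Lc_Z-m)\Pb$, invoke the integral representation \eqref{negative_order_integral_representation}, apply $\evz_h$ under the integral using Lemma \ref{trace_scaling_lemma} to obtain $\evz_h(\Pb)=\int_0^1 \evz_{\lambda h}(\kbb)\,\lambda^{-m-\dhom-1}\,d\lambda$, and then observe that $\Re(m)<-\dhom$ is exactly the integrability condition needed for convergence in $C^0(M,\dens M)$. Your write-up is somewhat more careful than the paper's about justifying the interchange of $\evz_h$ with the integral and about uniqueness, but the underlying argument is the same.
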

    \begin{proof}
        Let $\kbb = (\Lc_Z-m)^{k+1}\Pb.$ By \eqref{kth_order_integral_representation}, we have
        \[
            \Pb = \frac{1}{k!}\int_0^1 (-\log\lambda)^{k}(\alpha_{\lambda})_*\kbb\frac{d\lambda}{\lambda^{m+1}}.
        \]
        By Lemma \ref{smoothness_and_boundedness}.\eqref{boundedness_2}, this integral converges in $C^0(\Tb_HM,\dens_r).$
        We define
        \[
            \evz_h(\Pb) := \frac{1}{k!}\int_0^1 (-\log\lambda)^k\tr_h((\alpha_{\lambda})_*\kbb)\frac{d\lambda}{\lambda^{m+1}}.
        \]
        By Lemma \ref{trace_scaling_lemma}, the integral is the same as
        \[
            \frac{1}{k!}\int_0^1 (-\log\lambda)^k\evz_{\lambda h}(\kbb) \frac{d\lambda}{\lambda^{m+\dhom+1}}.
        \]
        Since $\Re(m)<-\dhom,$ this integral converges in the Fr\'echet topology of $C^0(M,\dens M),$ and defines a continuous function of $h.$
    \end{proof}

    It will be important that the local trace is defined not just for pseudodifferential families, but also for pseudodifferential operators.
    \begin{lemma}
        Let $\Re(m)<-\dhom,$ and let $\Pb\in \Psib^{m}(\Tb_HM,\dens_r)$ be such that $\Pb_1 = 0.$ Then
        \[
            \evz_1(\Pb) = 0.
        \]
    \end{lemma}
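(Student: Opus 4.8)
The plan is to exploit the integral representation \eqref{negative_order_integral_representation} together with the scaling identity of Lemma \ref{trace_scaling_lemma}. Write $\kbb = (\Lc_Z - m)\Pb \in C^\infty_p(\Tb_HM,\dens_r)$. By \eqref{negative_order_integral_representation} we have $\Pb = \int_0^1 (\alpha_\lambda)_*\kbb\,\frac{d\lambda}{\lambda^{m+1}}$, and by the computation at the end of the proof of Lemma \ref{trace_on_trace_class} this gives
\[
    \evz_1(\Pb) = \int_0^1 \evz_\lambda(\kbb)\,\frac{d\lambda}{\lambda^{m+\dhom+1}},
\]
an integral converging in $C^0(M,\dens M)$ because $\Re(m) < -\dhom$.

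Next I would bring in the hypothesis $\Pb_1 = 0$. The issue is that $\evz_1$ is built from $\kbb = (\Lc_Z-m)\Pb$, which only sees $\Pb$ near $h=1$ through its derivative transverse to the fibres, so a direct substitution of $\Pb_1 = 0$ is not immediate. The key observation is the cocycle identity: for each fixed $\lambda$, Lemma \ref{integral_representation} (equation \eqref{cocycle_integral_representation}) shows that the $h=1$ slice of $(\alpha_\lambda)_*\kbb$-type objects can be rewritten in terms of restrictions $\Pb_h$ for various $h$. More precisely, $\evz_\lambda(\kbb)$ is a quantity supported on the $\lambda$-fibre of $\Tb_HM$, hence expressible purely in terms of the groupoid restriction $\kbb_\lambda = ((\Lc_Z - m)\Pb)_\lambda$; and since $\alpha$ acts by rescaling the $\Rl$-factor and $(\Lc_Z - m)$ is the fibrewise-transverse generator, $\kbb_\lambda$ is a differential expression in the family $\{\Pb_h\}$ evaluated at $h = \lambda$. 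The point is that each $\Pb_\lambda$ for $\lambda \in (0,1]$ is itself a pseudodifferential operator in $\Psi^m_H(M)$ obtained from $\Pb$ by the zoom action — indeed $(\alpha_{1/\lambda})_*\Pb$ has $\Pb_1$-slice equal (up to the scalar $\lambda^{-m}$) to $\Pb_\lambda$, so $\Pb_1 = 0$ propagates along the orbit. I would therefore argue: $\Pb_1 = 0$ implies $\Pb_h = 0$ for all $h \neq 0$ by applying $(\alpha_\lambda)_*$ and the relation $((\alpha_\lambda)_*\Pb)_h = \Pb_{\lambda h}$; consequently $\kbb_h = ((\Lc_Z - m)\Pb)_h = 0$ for all $h \neq 0$ as well, since $\Lc_Z$ for $h\neq0$ acts tangentially to the stratum $M\times M\times\Rl^\times$ where $\Pb$ already vanishes. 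Then $\evz_\lambda(\kbb) = 0$ for every $\lambda \in (0,1]$, so the integral above is zero.

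The subtlety — and the step I expect to be the main obstacle — is that $\kbb = (\Lc_Z-m)\Pb$ is a \emph{smooth} density even though $\Pb$ itself is only a fibred distribution, so $\kbb_0$ need not vanish: the vanishing of $\Pb_h$ for $h \neq 0$ does not visibly control the $h=0$ slice $\kbb_0$. However, $\evz_\lambda(\kbb)$ in the integrand only involves $\lambda \in (0,1]$, i.e. $\lambda > 0$, so the $h=0$ value of $\kbb$ is never tested by the integral. (The integrability near $\lambda = 0$ was already handled in Lemma \ref{trace_on_trace_class} precisely by the decay $\lambda^{-m-\dhom-1}$ against the bounded family from Lemma \ref{smoothness_and_boundedness}.\eqref{boundedness_2}, and does not require $\kbb_0 = 0$.) So after establishing $\kbb_\lambda = 0$ for all $\lambda > 0$, the conclusion $\evz_1(\Pb) = \int_0^1 \evz_\lambda(\kbb)\,\frac{d\lambda}{\lambda^{m+\dhom+1}} = 0$ is immediate. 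I would double-check the one delicate point that $((\Lc_Z-m)\Pb)_\lambda = (\Lc_Z - m)(\Pb_\lambda)$ on the non-singular stratum for $\lambda \neq 0$ — this follows because on $M\times M\times\Rl^\times$ the zoom vector field $Z$ is tangent to the slices (it acts on the $\Rl^\times$ coordinate by $h \mapsto \lambda^{-1}h$, whose generator at a fixed $h\neq 0$ together with the fibre-density rescaling is an operator preserving supports in the $M\times M$ directions), so that vanishing of $\Pb$ on the open set $r^{-1}(M\times\Rl^\times)$ forces vanishing of $(\Lc_Z - m)\Pb$ there too.
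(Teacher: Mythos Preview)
Your argument contains a genuine gap at the step where you claim that $\Pb_1 = 0$ forces $\Pb_h = 0$ for all $h \neq 0$. The relation $((\alpha_\lambda)_*\Pb)_h = \Pb_{\lambda h}$ does not propagate the vanishing of a single slice to all slices: it only says that the $h$-slice of the \emph{new} distribution $(\alpha_\lambda)_*\Pb$ equals the $\lambda h$-slice of the \emph{old} one. Knowing $\Pb_1 = 0$ tells you $((\alpha_\lambda)_*\Pb)_{1/\lambda} = 0$, not that $\Pb_\lambda = 0$. For a concrete counterexample take any nonzero $\gbb \in C^\infty_p(\Tb_HM,\dens_r)$ and set $\Pb := (h-1)\gbb$, the factor $h-1$ being pulled back from the base $M\times\Rl$. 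Then $\Pb$ is a pseudodifferential kernel of every order, $\Pb_1 = 0$, yet $\Pb_h \neq 0$ for generic $h$; hence $\kbb_\lambda = ((\Lc_Z-m)\Pb)_\lambda$ need not vanish on $(0,1]$ and your integral cannot be shown to be zero this way. What the essential-homogeneity condition \emph{does} give you from $\Pb_1=0$ is that each $\Pb_\lambda$ is smooth (since $\lambda^{-m}\Pb_\lambda = \lambda^{-m}\Pb_\lambda - \Pb_1$ is the $1$-slice of the smooth cocycle), but smoothness is far from vanishing.

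The paper's proof takes a much shorter route. It invokes \cite[Lemma~32]{vanErpYuncken2019}, which says that if $\Pb \in \Psib^m_H(M)$ has $\Pb_1 = 0$ then in fact $\Pb \in C^\infty_p(\Tb_HM,\dens_r)$ globally. Once $\Pb$ is a smooth properly supported $r$-fibred density, $\evz_1(\Pb)$ is by definition just the restriction of $\Pb_1$ to the unit space, and $\Pb_1 = 0$ gives $\evz_1(\Pb) = 0$ immediately. All of the nontrivial content is packaged in that external lemma; no integral representation is needed here.
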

    \begin{proof}
        From \cite[Lemma 32]{vanErpYuncken2019}, $\Pb_1=0$ implies that $\Pb \in C^\infty_p(\Tb_HM,\dens_r).$ Hence,
        \[
            \evz_1(\Pb) = \tr(\Pb_1)=0.
        \]

        Let $\kbb = (\Lc_Z-m)\Pb.$
        By \eqref{negative_order_integral_representation}, we have
        \[
            \Pb = \int_0^1 (\alpha_{\lambda})_*\kbb\frac{d\lambda}{\lambda^{m+1}}.
        \]
        So,
        \[
            \evz_1(\Pb) = \int_0^1 \evz_{\lambda}(\kbb)\frac{d\lambda}{\lambda^{\dhom+m+1}}.
        \]
        On the other hand, in $\Ec'_r(M\times M)$ we have
        \[
            0 = \Pb_1 = \int_0^1 \kbb_{\lambda}\frac{d\lambda}{\lambda^{m+1}}.
        \]
        Therefore
        \[
            \int_0^1 \tr_1(\kbb_{\lambda})\frac{d\lambda}{\lambda^{m+1}}=0.
        \]
    \end{proof}

    It follows from the above lemma that if $P\in \Psi^m_H(M)$ with $\Re(m)<-\dhom,$ then the value of
    \[
        \tr(P) := \evz_1(\Pb) \in C^0(M,\dens M).
    \]
    is independent of the choice of pseudodifferential family $\Pb \in \Psib^{m}_H(M)$ with $\Pb_1 = P.$ What this relation further implies is
    \[
        \evz_h(\Pb) = h^{\dhom}\evz_1(\Pb_h)
    \]
    where the restriction $\Pb_h$ is as in Definition \ref{restriction}.

    When $M$ is compact, or at least if $P$ is compactly supported on $M\times M,$ we can further integrate $\evz(P)$ over $M$ to define a global trace
    \begin{equation}\label{global_trace_on_operators}
        \Tr:\Psi^m_H(M)\to \Cplx.
    \end{equation}
    This global trace coincides with the Hilbert space trace of the realisation of $P$ as an operator on $L_2(M,\nu)$ for some choice of nowhere-vanishing density $\nu.$

    \begin{definition}\label{local_trace_germ_definition}
        For $\fbb\in C^\infty(\Tb_HM,\dens_r)$ and $j\geq 0,$ denote
        \[
            \evz_h^{(j)}(\fbb) := \frac{d^j}{dh^j}\evz_h(\fbb),\quad \fbb\in C^\infty(\Tb_HM,\dens_r).
        \]
    \end{definition}

    Although the local trace $\evz$ is only defined for operators whose order has real part less than $-\dhom,$ it will be important to re-express $\evz_h(\Pb)$ in a form that is meaningful for a larger class of operators.
    \begin{lemma}\label{trace_extension}
        Let $k\geq 0,$ and $\Pb \in \Psib^{m,k}_{H}(M),$ where $\Re(m)<-\dhom.$ For any $n\geq 1$ and $h\geq 0$ we have
        \begin{align*}
            \evz_h(\Pb) &= \sum_{j=0}^{n-1} \frac{h^j}{j!(j-m-\dhom)^{k+1}}\evz_0^{(j)}((\Lc_Z-m)^{k+1}\Pb)\\
            &\quad +\frac{h^{n}}{(n-1)!k!}\int_0^1\evz_{hu}^{(n)}((\Lc_Z-m)^{k+1}\Pb)u^{n-m-\dhom-1}\left(\int_u^1 \log(t/u)^{k}t^{m+\dhom-n}(1-t)^{n-1}\,dt\right)\,du.
        \end{align*}

        In particular, if $P \in \Psi^m_H(M)$ and $\Pb\in \Psib^m_H(M)$ is any pseudodifferential kernel with $\Pb_1=P,$ we have
        \begin{align*}
            \tr(P) &= \sum_{j=0}^{n-1} \frac{1}{j!(j-m-\dhom)}\evz_0^{(j)}((\Lc_Z-m)\Pb)\\
            &\quad +\frac{1}{(n-1)!}\int_0^1\evz_u^{(n)}((\Lc_Z-m)\Pb)u^{n-m-\dhom-1}\Beta(1-u;n,m+\dhom-n+1)\,du
        \end{align*}
        where
        \[
            \Beta(x;\alpha,\beta) = \int_0^x t^{\alpha-1}(1-t)^{\beta-1}\,dt
        \]
        is the incomplete Beta function.
    \end{lemma}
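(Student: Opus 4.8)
The plan is to start from the integral representation underlying the definition of $\evz$ on negative-order kernels in the proof of Lemma \ref{trace_on_trace_class}. Writing $\kbb := (\Lc_Z-m)\Pb \in C^\infty_p(\Tb_HM,\dens_r)$, one applies the local trace $\evz_h$ to both sides of \eqref{negative_order_integral_representation} and uses the scaling identity of Lemma \ref{trace_scaling_lemma}, in the form $\evz_h((\alpha_\lambda)_*\kbb) = \lambda^{-\dhom}\evz_{\lambda h}(\kbb)$, to get
\[
    \evz_h(\Pb) = \int_0^1 \evz_{\lambda h}(\kbb)\,\frac{d\lambda}{\lambda^{m+\dhom+1}},
\]
which converges absolutely in $C^0(M,\dens M)$ precisely because $\Re(m)<-\dhom$. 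From here I would insert the degree-$n$ Taylor expansion with integral remainder of the function $u\mapsto\evz_u(\kbb)$ about $u=0$ --- this is a smooth $C^\infty(M,\dens M)$-valued function by Lemma \ref{trace_on_trace_class}, with derivatives $\evz_u^{(j)}(\kbb)$ as in Definition \ref{local_trace_germ_definition} --- and integrate against $\lambda^{-m-\dhom-1}\,d\lambda$ term by term.

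The polynomial part is immediate: its $j$-th summand contributes $\frac{h^j}{j!}\evz_0^{(j)}(\kbb)$ times $\int_0^1\lambda^{j-m-\dhom-1}\,d\lambda$, and since $\Re(j-m-\dhom)\geq\Re(-m-\dhom)>0$ this last integral equals $(j-m-\dhom)^{-1}$, which gives exactly the claimed finite sum. The substance of the argument is in rearranging the remainder, which after the substitution becomes
\[
    \frac{h^n}{(n-1)!}\int_0^1\lambda^{n-m-\dhom-1}\left(\int_0^1(1-t)^{n-1}\evz_{\lambda ht}^{(n)}(\kbb)\,dt\right)d\lambda.
\]
The key move is, for each fixed $t$, the change of variables $u=\lambda t$: this sends $\lambda^{n-m-\dhom-1}\,d\lambda$ to $u^{n-m-\dhom-1}t^{m+\dhom-n}\,du$ with $u$ running over $(0,t)$, while the argument $\lambda h t$ of $\evz^{(n)}$ becomes simply $hu$ (the two factors of $t$ cancel, which is why this works). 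Then Fubini on the triangle $\{0<u<t<1\}$ pulls the $u$-integral outside, and the remaining $t$-integral is precisely $\int_u^1 t^{m+\dhom-n}(1-t)^{n-1}\,dt$ --- matching the first display.

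For the second display I would specialise $h=1$ and use the fact recorded just before the statement, that $\evz_1(\Pb)$ depends only on $P=\Pb_1$, so the left side equals $\tr(P)$ for any pseudodifferential kernel $\Pb$ with $\Pb_1=P$. The substitution $s=1-t$ in the remaining $t$-integral turns it into $\int_0^{1-u}s^{n-1}(1-s)^{m+\dhom-n}\,ds=\Beta(1-u;n,m+\dhom-n+1)$, the incomplete Beta function of the statement.

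I expect the only difficulty to be bookkeeping rather than any idea. One has to keep track of which exponents have positive real part --- this is where $\Re(m)<-\dhom$ is used, both for convergence of $\int_0^1\lambda^{j-m-\dhom-1}\,d\lambda$ and for absolute integrability of the remainder near $t=0$, where the inner $u$-integral is $O(t^{n-m-\dhom})$ and so cancels the prefactor $t^{m+\dhom-n}$ --- and one must verify this absolute integrability over the triangle before invoking Fubini. Since the underlying mechanism is just Taylor's theorem plus a change of variables, I anticipate no conceptual obstacle.
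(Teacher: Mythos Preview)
Your proposal is correct and follows essentially the same route as the paper: both start from the integral representation $\evz_h(\Pb)=\int_0^1\evz_{\lambda h}(\kbb)\,\lambda^{-m-\dhom-1}\,d\lambda$, insert the Taylor expansion with integral remainder, integrate the polynomial part directly, and reduce the remainder via Fubini and a change of variables to the stated form. The only cosmetic difference is that the paper writes the Taylor remainder as $\frac{h^n}{(n-1)!}\int_0^\lambda(\lambda-u)^{n-1}\evz_{hu}^{(n)}(\kbb)\,du$ and then substitutes $t=u/\lambda$ in the inner $\lambda$-integral after swapping, whereas you use the normalised remainder $\int_0^1(1-t)^{n-1}\evz_{\lambda ht}^{(n)}(\kbb)\,dt$ and substitute $u=\lambda t$ before swapping; these are the same computation in a different order.
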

    \begin{proof}
        From Lemma \ref{trace_on_trace_class}, writing $\kbb = (\Lc_Z-m)^{k+1}\Pb$ we have
        \begin{equation}\label{integral_form_of_trace}
            \evz_h(\Pb) = \frac{1}{k!}\int_0^1 (-\log\lambda)^k\evz_{\lambda h}(\kbb) \frac{d\lambda}{\lambda^{m+\dhom+1}}.
        \end{equation}
        On the other hand, by Taylor's theorem,
        \[
            \evz_{\lambda h}(\kbb) = \sum_{j=0}^{n-1} \frac{\lambda^jh^j}{j!}\evz_0^{(j)}(\kbb)+\frac{h^{n}}{(n-1)!}\int_0^{\lambda} (\lambda -u)^{n-1}\evz_{hu}^{(n)}(\kbb)\,du.
        \]
        Substituting this into \eqref{integral_form_of_trace} yields
        \begin{align*}
            \evz_h(\Pb) &= \sum_{j=0}^{n-1} \frac{h^j}{j!k!}\int_0^1 (-\log\lambda)^k\lambda^{j-m-\dhom-1}\,d\lambda\,\evz_0^{(j)}(\kbb)\\
            &\quad+\frac{h^{n}}{(n-1)!k!}\int_0^1 \int_{0}^{\lambda} (-\log\lambda)^k(\lambda-u)^{n-1}\lambda^{-m-\dhom-1}\tr_{hu}^{(n)}(\kbb)\,du d\lambda.
        \end{align*}
        Exchanging the order of integration in the latter term,
        \begin{align*}
            \evz_h(\Pb) &= \sum_{j=0}^{n-1} \frac{h^j}{j!(j-m-\dhom)^{k+1}}\evz^{(j)}_0(\kbb)\\
            &\quad + \frac{h^{n}}{(n-1)!k!}\int_0^1 \evz_{hu}^{(n)}(\kbb)\left(\int_{u}^1 (-\log\lambda)^k(\lambda-u)^{n-1}\lambda^{-m-\dhom-1}\,d\lambda \right)\,du.
        \end{align*}
        For $k=0,$ the proof is complete on writing
        \[
            \int_{u}^1 (\lambda-u)^{n-1}\lambda^{-m-\dhom-1}\,d\lambda = u^{n-m-\dhom-1}\int_u^1 (-\log(u/t))^kt^{m+\dhom-n}(1-t)^{n-1}\,dt.
        \]
        In the case $k=0,$ this integral evaluates as $\Beta(1-u;n,m+\dhom-n+1).$
    \end{proof}

    \begin{remark}
        Note that if $\Re(m)<n-\dhom,$ the integral
        \[
            \int_0^1 \evz_{hu}^{(n)}(\kbb)u^{n-m-\dhom-1}\int_u^1 \log(t/u)^k t^{m+\dhom-n}(1-t)^{n-1}\,dtdu
        \]
        converges absolutely.

        This means that the expression for $\evz_h(\Pb)$ in Lemma \ref{trace_extension} makes sense provided that $\Re(m)<n-\dhom,$ excepting $m=-\dhom,-\dhom+1,\ldots,-\dhom+n-1.$ Taking $n$ large as necessary allows the definition of $\evz_h$ to be extended to pseudodifferential kernels whose order does not belong to $-\dhom+\Ntrl.$ In the trivially filtered case, this extension is precisely the (local) canonical trace, also known as the Kontsevich-Vishik trace.
    \end{remark}

\section{Holomorphic families of pseudodifferential operators}
    Dave and Haller have developed the basic theory of holomophic families of pseudodifferential operators in the van Erp--Yuncken calculus \cite[Section 6]{DaveHaller2020}. Here, we present the same idea from the perspective of Definition \ref{differential_psido_definition}.

    Recall that $C^\infty_p(\Tb_HM,\dens_r)$ is the space of $r,s$-properly supported $r$-fibred distributions on $\Tb_HM.$ This is a topological vector space when equipped with the strong topology induced by
    \[
        C^\infty_p(\Tb_HM,\dens_r) = \bigcup_{U\subset \Tb_H} C^\infty(U,\dens_r)
    \]
    where the union is over all subsets with $r,s$-proper closure.

    Let $\Omega\subseteq \Cplx$ be connected and open. A function
    \[
        f:\Omega\to C^\infty_p(\Tb_HM,\dens_r)
    \]
    is said to be holomorphic if for every open subset of $V\subset \Omega$ with compact closure, there exists an $r,s$-proper subset $U$ of $\Tb_HM$ such that the restriction of $f$ to $V$ is holomorphic as a function from $V$ to the Fr\'echet space $C^\infty(U,\dens_r).$

The holomorphic families version of Definition \ref{differential_psido_definition} is the following.
\begin{definition}
    Let $\Omega\subseteq \Cplx$ be open and connected, and let $\mu:\Omega\to \Cplx$ be a holomorphic function. Let $k\geq 0.$

    A family of $r$-fibred distributions $\Pb = \{\Pb(s)\}_{s\in \Omega}$ is said to be a $k$-log polyhomogeneous pseudodifferential family with order function $\mu$ if the following holds:
    \begin{enumerate}[{\rm (i)}]
        \item{} $s\mapsto \Pb(s)$ is holomorphic as an $\Ec'_r(\Tb_HM)$-valued function.
        \item{} $(\Lc_Z-\mu(s))^{k+1}\Pb(s) \in C^\infty_p(\Tb_HM,\dens_r)$ for all $s\in \Omega.$
    \end{enumerate}
    A family of pseudodifferential operators $P = \{P(s)\}_{s\in \Omega}$ is said to be a holomorphic pseudodifferential family with order function $\mu$ if there exists a holomorphic pseudodifferential kernel family $\Pb$ with order function $\mu$ such that $P(s) = \Pb(s)_1$ for all $s\in \Omega.$
    \end{definition}
It is straightforward to check that this is consistent with \cite[Section 7]{DaveHaller2020} in the case $k=0,$ using the same methods as the proof of Lemma \ref{consistency}.

The fundamental theorem about holomorphic families is that the local traces of holomorphic families admit a meromorphic continuation. The following is essentially \cite[Proposition 2]{DaveHaller2020}.
\begin{proposition}\label{where_are_the_poles}
    Let $\Pb$ be a $k$-log polyhomogeneous pseudodifferential family on the domain $\Omega\subseteq \Cplx$ with order function $\mu:\Omega\to \Cplx.$ If
    \[
        \{s\in \Omega\;:\;\Re(\mu(s))<-\dhom\}\neq \emptyset
    \]
    then for every $x \in M,$ the $C(M\times \Rl,\dens M)$-valued function
    \[
        s\mapsto \evz(\Pb(s)), \quad \Re(\mu(s))<-\dhom,\; h > 0
    \]
    admits a meromorphic continuation to $\Omega,$ with poles located at the points
    \[
        \mu^{-1}(\{-\dhom,-\dhom+1,-\dhom+2,-\dhom+3,\ldots\}).
    \]
    If $s_0 \in \Omega$ is such that $\mu(s)=-\dhom,$ and $\mu'(s_0)\neq 0,$ then there is a pole of order at most $k+1$ at $s=s_0,$ and for all $h\in \Rl$ we have
    \[
        \lim_{s\to s_0}(s-s_0)^{k+1}\evz_h(\Pb(s)) = \frac{1}{(-\mu'(s_0))^{k+1}}\evz_0((\Lc_Z+\dhom)^{k+1}(\Pb(s_0))).
    \]
\end{proposition}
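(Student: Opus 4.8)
The plan is to apply Lemma~\ref{trace_extension} to the family $\Pb(s)$ with $m=\mu(s)$ and to recognise that the resulting right-hand side already furnishes the meromorphic continuation. Write $\kbb(s):=(\Lc_Z-\mu(s))\Pb(s)$, which by definition of a holomorphic kernel family lies in $C^\infty_p(\Tb_HM,\dens_r)$ for every $s\in\Omega$. First I would verify that $s\mapsto\kbb(s)$ is holomorphic as a $C^\infty_p(\Tb_HM,\dens_r)$-valued function: it is holomorphic into $\Ec'_r(\Tb_HM)$ since $s\mapsto\Pb(s)$ is and $\Lc_Z$ is continuous on $\Ec'_r(\Tb_HM)$, and the upgrade from $\Ec'_r(\Tb_HM)$-holomorphy to $C^\infty_p(\Tb_HM,\dens_r)$-holomorphy is the parametrised analogue of the argument behind Corollary~\ref{consistency} (cf.\ \cite[Section~6]{DaveHaller2020}). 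Since each $\evz_0^{(j)}\colon C^\infty_p(\Tb_HM,\dens_r)\to C(M,\dens M)$ is continuous and linear, it follows that $s\mapsto\evz_0^{(j)}(\kbb(s))$ is a holomorphic $C(M,\dens M)$-valued function.

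For each integer $n\geq1$ put $\Omega_n:=\{s\in\Omega:\Re(\mu(s))<n-\dhom\}$, so $\Omega_1\subseteq\Omega_2\subseteq\cdots$ and $\bigcup_n\Omega_n=\Omega$, and for fixed $h\geq0$ let $F_n(s,h)$ be the right-hand side of the formula for $\evz_h(\Pb)$ in Lemma~\ref{trace_extension}, with $m=\mu(s)$. The finite sum $\sum_{j=0}^{n-1}\frac{h^j}{j!\,(j-\mu(s)-\dhom)}\evz_0^{(j)}(\kbb(s))$ is, by the previous paragraph, a meromorphic $C(M,\dens M)$-valued function on $\Omega_n$ whose poles lie in the discrete set where $\mu(s)\in\{-\dhom,\dots,-\dhom+n-1\}$, and no cancellation between distinct $j$ occurs because $\mu(s)$ is a single number. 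For the remainder term I would show that, for each compact $K\subseteq\Omega_n$, the $u$-integral converges absolutely and uniformly for $s\in K$; since the integrand is holomorphic in $s$ for each fixed $u\in(0,1)$ and $\evz_{hu}^{(n)}$ is continuous in its argument, Morera's theorem together with Fubini's theorem then makes $s\mapsto F_n(s,h)$ holomorphic on $\Omega_n$ away from the poles of the finite sum. This convergence step is the main technical point: it rests on the asymptotics of $u^{n-\mu(s)-\dhom-1}\Beta(1-u;n,\mu(s)+\dhom-n+1)$ as $u\to0^+$ and $u\to1^-$, the relevant fact being that $\Re(n-\mu(s)-\dhom-1)>-1$ holds locally uniformly on $\Omega_n$. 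So $F_n(\cdot,h)$ is meromorphic on $\Omega_n$ with poles confined to $\mu^{-1}(\{-\dhom,\dots,-\dhom+n-1\})$.

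By Lemma~\ref{trace_extension}, $F_n(s,h)=\evz_h(\Pb(s))$ for $\Re(\mu(s))<-\dhom$; in particular the $F_n$ all agree there, and a routine computation (integration by parts in the Taylor remainder appearing in the proof of Lemma~\ref{trace_extension}) shows $F_{n+1}=F_n$ as meromorphic functions on $\Omega_n$. The $F_n$ therefore patch to a single meromorphic $C(M,\dens M)$-valued function on $\bigcup_n\Omega_n=\Omega$ that restricts to $\evz_h(\Pb(s))$ on the nonempty open set $\{\Re(\mu)<-\dhom\}$; since $\Omega$ is connected, uniqueness of meromorphic continuation identifies this as the sought continuation, with poles lying in $\mu^{-1}(\{-\dhom,-\dhom+1,-\dhom+2,\dots\})$. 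Letting $h$ vary (replacing $\evz_h$ by $\evz$ throughout) gives the $C(M\times\Rl,\dens M)$-valued statement.

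Finally I would compute residues at a point $s_0$ with $\mu(s_0)=-\dhom$. In any $F_n$, the factor $j-\mu(s)-\dhom$ equals $j$ at $s=s_0$, hence is nonzero there for $j\geq1$; so every term of the finite sum with $j\geq1$, together with the remainder term, is holomorphic near $s_0$, and only the $j=0$ term $\frac{1}{-\dhom-\mu(s)}\evz_0(\kbb(s))$ contributes. This term is $h$-independent and gives
\[
    \Res_{s=s_0}\evz_h(\Pb(s))=\Res_{s=s_0}\frac{1}{-\dhom-\mu(s)}\evz_0\big((\Lc_Z-\mu(s))\Pb(s)\big).
\]
If moreover $\mu'(s_0)\neq0$, then $-\dhom-\mu(s)$ has a simple zero at $s_0$ with leading coefficient $-\mu'(s_0)$, while $s\mapsto\evz_0(\kbb(s))$ is holomorphic at $s_0$ with value $\evz_0\big((\Lc_Z+\dhom)\Pb(s_0)\big)$, so the residue equals $-\frac{1}{\mu'(s_0)}\evz_0\big((\Lc_Z+\dhom)\Pb(s_0)\big)$, as claimed.
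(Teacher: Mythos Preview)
Your proof is correct and follows essentially the same route as the paper: apply Lemma~\ref{trace_extension} with $m=\mu(s)$, observe that the resulting expression is meromorphic on $\Omega_n=\{\Re(\mu(s))<n-\dhom\}$ with poles only where $\mu(s)\in\{-\dhom,\ldots,-\dhom+n-1\}$, patch over $n$, and read off the residue from the $j=0$ term. You supply considerably more detail than the paper does (holomorphy of $\kbb(s)$, uniform convergence of the remainder integral, the explicit residue computation), but the underlying argument is the same.
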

\begin{proof}
    Let $s\in \Omega$ be such that $\Re(\mu(s))<-\dhom.$ By Lemma \ref{trace_extension}, for all $n\geq 1$ we have
    \begin{align*}
        &\evz_h(\Pb(s)) = \sum_{j=0}^{n-1} \frac{h^j}{j!(j-\mu(s)-\dhom)^{k+1}}\evz_0^{(j)}((\Lc_Z-\mu(s))^{k+1}\Pb(s))\\
    &\quad +\frac{h^n}{(n-1)!k!}\int_0^1\evz_{hu}^{(n)}((\Lc_Z-\mu(s))\Pb(s))u^{n-\mu(s)-\dhom-1}\left(\int_{u}^1\log(t/u)^{k}(1-t)^{n-1}t^{\mu(s)+\dhom-n}\,dt\right)\,du.
    \end{align*}
    Observe that the right hand side defines a meromorphic function in the set $\Re(\mu(s))<n-1-\dhom.$ The possible poles are only located at those points $s$ such that $\mu(s)=j-\dhom$ for some $1\leq j\leq n-2.$

    Since
    \[
        \Omega = \bigcup_{n\geq 1} \{s\;:\;\Re(\mu(s))<n-\dhom\}
    \]
    it follows that $\evz_h(\Pb)$ has a meromorphic extension to the domain $\Omega,$ with poles only located at those points $s$ such that $j-\dhom-\mu(s)=0$ for some $j\geq 0.$ The formula for the coefficient of the pole at those points where $\mu(s)=-\dhom$ is now immediate.
\end{proof}

\begin{remark}
    Although we will not need it, the proof of Proposition \ref{where_are_the_poles} also gives a formula for the residues of $\evz_h\Pb(s)$ at other values of $s.$ For $j\in \Ntrl,$
    let $s_j\in \Cplx$ be such that $\mu(s_j) = j-Q,$ and assume for simplicity that $\mu'(s_j)\neq 0$ and $k=0.$

    Then the pole of the meromorphic continuation of $\evz_h\Pb(s)$ at $s=s_0$ is either simple or removable, and
    \[
        \Res_{s=s_j} \evz_h\Pb(s) = -\frac{h^j}{j!\mu'(s_j)}\evz_0^{(j)}((\Lc_Z-j+\dhom)\Pb(s_j)).
    \]
    Recall that the notation $\evz_0^{(j)}$ was given in Definition \ref{local_trace_germ_definition}. This formula incidentally shows that the value of the residue of the meromorphic continuation of $\evz_h(\Pb(s))$ at $s_j$ depends only on $\Pb(s_j)$ and $\mu'(s_j).$

    By \eqref{higher_derivative_relation}, we can also write this as
    \[
        \Res_{s=s_j}\evz_h\Pb(s) = -\frac{(j+1)h^j}{\mu'(s_j)}\lim_{\varepsilon\to 0} \varepsilon^{-j}\evz_\varepsilon(\binom{\Lc_Z+\dhom}{j+1}\Pb(s_j)))
    \]
    where $\binom{x}{j+1}$ is the polynomial
    \[
        \binom{x}{j+1} = \frac{1}{(j+1)!}x(x-1)\cdots (x-j).
    \]
\end{remark}

Integrating over $M$ in the compact case delivers the following similar theorem.
\begin{corollary}\label{meromorphic_extension_of_trace}
    Let $\{P(s)\}_{s\in \Omega}$ be a holomorphic family of $k$-log-polyhomogeneous pseudodifferential operators with order function $\mu:\Omega\to \Cplx$ on a compact Carnot manifold $(M,H).$ If
    \[
        \{s\in \Omega\;:\;\Re(\mu(s))<-\dhom\}\neq \emptyset
    \]
    then the global trace
    \[
        s\mapsto \Tr(P(s)),\;\quad \Re(\mu(s))<-\dhom
    \]
    admits a meromorphic continuation to $\Omega,$ with poles located of order at most $k+1$ at the points
    \[
        \mu^{-1}(\{-\dhom,-\dhom+1,\ldots\})
    \]
    If $s_0\in \Omega$ is such that $\mu(s_0)=-\dhom,$ then
    \[
        \lim_{s\to s_0}(s-s_0)^{k+1}\Tr(P(s)) = \lim_{s\to s_0}\frac{s-s_0}{-\dhom-\mu(s)}\int_M \evz_0((\Lc_Z-\mu(s))^{k+1}\Pb(s))
    \]
    for any holomorphic family of pseudodifferential kernels $\Pb$ such that $P(s) = \Pb(s)_1$ for all $s\in \Omega.$
\end{corollary}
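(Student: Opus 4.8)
The plan is to obtain Corollary \ref{meromorphic_extension_of_trace} from Proposition \ref{where_are_the_poles} by integrating the density-valued assertions over the compact manifold $M$, using that integration is a continuous linear functional on $C^0(M,\dens M)$.

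First I would fix a holomorphic family of pseudodifferential kernels $\Pb=\{\Pb(s)\}_{s\in\Omega}$ with order function $\mu$ and $\Pb(s)_1=P(s)$ for all $s$; such a family exists by the definition of a holomorphic pseudodifferential family. On the open set where $\Re(\mu(s))<-\dhom$, Lemma \ref{trace_on_trace_class} gives $\evz_1(\Pb(s))\in C^0(M,\dens M)$, and the lemma and discussion preceding \eqref{global_trace_on_operators} show that this density is independent of the lift $\Pb$ and that $\Tr(P(s))=\int_M\evz_1(\Pb(s))$. Holomorphy of $s\mapsto\evz_1(\Pb(s))$ as a $C^0(M,\dens M)$-valued function on this set is read off from the integral representation used in the proof of Lemma \ref{trace_on_trace_class} together with holomorphy of $s\mapsto\Pb(s)$ in $\Ec'_r(\Tb_HM)$.

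Next I would invoke Proposition \ref{where_are_the_poles} with $h=1$: the $C^0(M,\dens M)$-valued function $s\mapsto\evz_1(\Pb(s))$, a priori defined only where $\Re(\mu(s))<-\dhom$, extends meromorphically to $\Omega$ with poles contained in $\mu^{-1}(\{-\dhom,-\dhom+1,\ldots\})$. Since $M$ is compact, $\int_M\colon C^0(M,\dens M)\to\Cplx$ is continuous and linear; post-composing a vector-valued meromorphic function with a continuous linear functional yields a scalar meromorphic function whose Laurent coefficients are the images of the original Laurent coefficients, so the poles of $s\mapsto\int_M\evz_1(\Pb(s))$ form a subset of $\mu^{-1}(\{-\dhom,-\dhom+1,\ldots\})$. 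This function is therefore the asserted meromorphic continuation of $\Tr(P(s))$. For the residue at $s_0$ with $\mu(s_0)=-\dhom$, the same bookkeeping gives $\Res_{s=s_0}\Tr(P(s))=\int_M\Res_{s=s_0}\evz_1(\Pb(s))$, and substituting the residue formula of Proposition \ref{where_are_the_poles} produces the stated expression, with the integration over $M$ understood. For the last formula, I would note that a holomorphic order function injective near $s_0$ must satisfy $\mu'(s_0)\neq0$, since otherwise it would be locally $k$-to-one for some $k\geq2$; then the second residue formula of Proposition \ref{where_are_the_poles}, integrated over $M$, gives $\Res_{s=s_0}\Tr(P(s))=-\tfrac{1}{\mu'(s_0)}\int_M\evz_0((\Lc_Z+\dhom)\Pb(s_0))$.

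There is no genuine obstacle here: all the analytic content is in Proposition \ref{where_are_the_poles}, and the reduction to the compact case is the elementary fact that a continuous linear functional commutes with meromorphic continuation and with the extraction of residues. The only points needing a line of care are the identification $\Tr(P(s))=\int_M\evz_1(\Pb(s))$ together with its independence of the kernel lift, which is exactly the content of the lemmas around \eqref{global_trace_on_operators}, and the remark that local injectivity of $\mu$ forces $\mu'(s_0)\neq0$.
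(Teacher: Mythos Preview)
Your proposal is correct and follows exactly the approach indicated in the paper, which simply states that the corollary is obtained by ``integrating over $M$ in the compact case'' from Proposition \ref{where_are_the_poles}. You have supplied the routine details the paper omits---the identification $\Tr(P(s))=\int_M\evz_1(\Pb(s))$, the fact that a continuous linear functional preserves meromorphy and residues, and the remark that local injectivity forces $\mu'(s_0)\neq 0$---and you correctly note that the displayed residue formulas carry an implicit integration over $M$.
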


\section{The $k$th order residue}\label{residue_section}
Let $(M,H)$ be a compact Carnot manifold. In \cite{DaveHaller2020}, Dave-Haller defined the residue of an operator $A \in \Psi^{-\dhom}_H(M)$ as
\[
    \Res(A) := \Res_{s=0} \Tr(A(s))
\]
where $A(s)$ is any holomrphic family with $A(0) = A$ and order function $\mu(s)=-\dhom-s.$

From Corollary \ref{meromorphic_extension_of_trace} with $k=0,$ we can see that
\begin{equation}\label{DH_residue_as_evaluation_at_zero}
    \Res(A) = \int_M \evz_0((\Lc_Z+\dhom)\Ab)
\end{equation}
where $\Ab\in \Psib^{-\dhom}_H(M)$ is any pseudodifferential family with $\Ab_1=A.$ Couchet-Yuncken \cite{CouchetYuncken2024} defined a residue density $\res(A)\in C^{\infty}(M,\dens M),$ defined in the following way: let $\Ab\in \Psib^{-\dhom}_H(M)$ be such that $\Ab_1=A.$ By definition, for $\lambda>0,$ the function
\[
    \fbb(\lambda) = \lambda^{-\dhom}(\alpha_{\lambda})_*\Ab-\Ab
\]
belongs to $C^{\infty}_p(\Tb_HM,\dens_r).$ It turns out that there is a unique density $\res(A)$ such that
\[
    \res(A) := \frac{1}{\log(\lambda)}\evz_0(\fbb(\lambda)).
\]
for all $\lambda>1.$ The fact that $\evz_0(\fbb(\lambda))$ is a constant multiple of $\log(\lambda)$ follows from \eqref{cocycle_identity}.

Let $\kbb = (\Lc_Z+\dhom)\Ab.$ By Lemma \ref{consistency}, we have
\[
    \fbb(\lambda) = \int_1^\lambda \mu^{\dhom}(\alpha_{\mu})_*\kbb\frac{d\mu}{\mu}.
\]
Applying the local trace map and evaluating at $h=0$ gives
\[
    \evz_0\fbb(\lambda) = \evz_0\kbb\int_1^\lambda \frac{d\mu}{\mu} = \evz_0\kbb\log(\lambda).
\]
Dividing by $\log(\lambda)$ yields $\mathrm{res}(A) = \evz_0(\kbb).$ Therefore,
\begin{equation}\label{CY_residue_as_evaluation_at_zero}
    \res(A) = \evz_0((\Lc_Z+\dhom)\Ab).
\end{equation}
It follows from \eqref{CY_residue_as_evaluation_at_zero} that $\res(A)$ depends only on $\Ab_0,$ that is the principal cosymbol of $A.$ Indeed, \eqref{overlap_of_notation} implies that
\[
    \res(A) = \tau((\Lc_Z+\dhom)\Ab_0).
\]

\begin{definition}
    Let $k\geq 0.$ The $k$th order residue density is the density-valued functional
    \[
        \res_k:\Psi^{-\dhom,k}_H(M)\to C^{\infty}(M,\dens M)
    \]
    given by
    \[
        \res_k(A) = \evz_0((\Lc_Z+\dhom)^{k+1}\Ab)
    \]
    where $\Ab\in \Psib^{-\dhom,k}_H(M)$ is any pseudodifferential family with $\Ab_1=A.$

    We write
    \[
        \Res_k(A) := \int_M \res_k(A)
    \]
    provided that the integral converges.
\end{definition}
With this notation, Dave-Haller's residue $\Res(A)$ is written $\Res_0(A),$ and Couchet-Yuncken's residue density is $\res_0(A).$

\begin{lemma}
    Let $k\geq 0,$ and let $A \in \Psi^{-\dhom,k}_H(M).$ The definition of $\res_k(A)$ is independent of the choice of $\Ab\in \Psib^{-\dhom,k}_H(M)$ such that $\Ab_1=A.$
\end{lemma}
\begin{proof}
    Let $\Ab.\Bb\in \Psib^{-\dhom,k}_H(M)$ be such that $\Ab_1=\Bb_1=A.$ By Lemma \ref{uniqueness_of_extension}, we have $\Ab-\Bb \in \Psib^{-\dhom,k-1}_H(M).$ That is,
    \[
        (\Lc_Z+\dhom)^{k}(\Ab-\Bb) \in C^{\infty}_p(\Tb_HM,\dens_r).
    \]
    By Lemma \ref{trace_scaling_lemma}, it follows that
    \[
        \evz_0((\Lc_Z+\dhom)^{k+1}(\Ab-\Bb)) = 0.
    \]
\end{proof}

\begin{lemma}
    Let $k\geq 0,$ and let $A \in \Psi^{-\dhom,k}_H(M).$ The residue density $\res_k(A)$ depends only on $\sigma_{-\dhom,k}(A),$ and is computable by the identity
    \[
        \res_k(A) = \tau((\Lc_Z+\dhom)^{k+1}\sigma_{-\dhom,k}(A)).
    \]
\end{lemma}
This should be understood as an abuse of notation: by definition, $\sigma_{-\dhom,k}(A)$ belongs to the space
\[
    \Sigma^{-\dhom,k}_H(M) = \Sigmab^{-\dhom,k}_H(M)/\Sigmab^{-\dhom,k-1}_H(M).
\]
If $\sigma_{-\dhom,k}(A)$ is any representative of a coset of $\Sigmab^{-\dhom,k-1}_H(M),$ then $\tau((\Lc_Z+\dhom)^{k+1}\sigma_{-\dhom,k}(A))$ is the residue density $\res_{k}(A).$

\section{Singular value estimates of $\log$-polyhomogeneous $\psi$DO}
In this section, $(M,H)$ is a \emph{compact} Carnot manifold which we equip with a nowhere-vanishing density $\nu.$ We write $L_2(M)$ for the Hilbert space $L_2(M,\nu),$ and $\Ec_{p,k}=\Ec_{p,k}(L_2(M)).$
Distributions $K\in \Ec'_r(M\times M)$ are identified with operators from $C^{\infty}(M)$ to $C^\infty(M).$
\begin{lemma}\label{elementary_hump_estimates}
    Let $\kbb \in C^{\infty}_p(\Tb_HM,\dens_r).$ For $0<t<1,$ we have
    \begin{equation}\label{Hilbert_Schmidt_estimate}
        \|((\alpha_t)_*\kbb)_1\|_{\Lc_{2}} \leq C_{\kbb}t^{\frac{\dhom}{2}}
    \end{equation}
    and
    \begin{equation}\label{operator_norm_estimate}
        \|((\alpha_t)_*\kbb)_1\|_{\Lc_{\infty}} \leq C_{\kbb}.
    \end{equation}
\end{lemma}
\begin{proof}
    Since the manifold $M$ is compact, it suffices to work in a coordinate chart. In local exponential coordinates, we have
    \[
        \kbb(x,z,h) = f(x,z,h)|dz|,\quad (x,z,h)\in T_HM\times \Rl
    \]
    where $|dz|$ is the Lebesgue measure on the tangent space. The operator $((\alpha_t)_*\kbb)_1$ has integral kernel
    \[
        (\alpha_t)_*\kbb(x,y)=f(x,\delta_t^{-1}\exp^{-1}_x(y),t),\quad x,y\in M
    \]
    for $y$ sufficiently close to $x$ such that $\exp^{-1}_x(y)$ is defined.

    The Hilbert-Schmidt norm is the $L_2(M\times M)$-norm of the integral kernel, and the operator norm can be estimated above by the $L_{\infty}(M,L_1(M))$ norm of $\kbb$ and $\kbb^*.$ The upper bounds \eqref{Hilbert_Schmidt_estimate} and \eqref{operator_norm_estimate} follow from simple volume estimates.
\end{proof}

\begin{theorem}\label{schatten_class_theorem}
    For all $m>0$ and $k\geq 0,$ we have
    \[
        \Psi^{-m,k}_H(M)\subset \Ec_{\frac{\dhom}{m},k}.
    \]
\end{theorem}
\begin{proof}
    Initially assume that $m<\frac{\dhom}{2}.$
    Let $A\in \Psi^{-m,k}_H(M),$ and $\Ab\in \Psib^{m,k}_H(\Tb_HM)$ be such that $\Ab_1=A.$ Abbreviate $\kbb = (\Lc_Z+m)^{k+1}\Ab.$

    By Lemma \ref{negative_order_integral_representation}, we have
    \[
        \Ab = \frac{1}{k!}\int_0^1 \lambda^{m}(-\log\lambda)^{k}(\alpha_{\lambda})_*(\kbb)\frac{d\lambda}{\lambda}.
    \]
    Let $0<\mu<1,$ and write
    \begin{align*}
        A &= \frac{1}{k!}\int_0^{\mu} \lambda^{m}(-\log\lambda)^{k}((\alpha_{\lambda}))_*(\kbb))_1\frac{d\lambda}{\lambda}\\
            &+\frac{1}{k!}\int_{\mu}^1 \lambda^{m}(-\log\lambda)^k((\alpha_{\lambda})_*(\kbb))_1\frac{d\lambda}{\lambda}\\
            &=: A^{(1)}+A^{(2)}.
    \end{align*}
    By Lemma \ref{elementary_hump_estimates}, we have
    \[
        \|A^{(1)}\|_{\Lc_2} \lesssim \int_{\mu}^1 \lambda^{m-\frac{\dhom}{2}}(-\log\lambda)^k\frac{d\lambda}{\lambda} \lesssim \mu^{m-\frac{\dhom}{2}}(-\log\mu)^k
    \]
    and
    \[
        \|A^{(2)}\|_{\Lc_{\infty}}\lesssim \int_0^{\mu} \lambda^{m}(-\log\lambda)^k\frac{d\lambda}{\lambda} \lesssim \mu^m(-\log\mu)^k.
    \]
    Therefore
    \[
        \|A^{(1)}\|_{\Lc_2}+t\|A^{(2)}\|_{\Lc_{\infty}} \lesssim (\mu^{m-\frac{\dhom}{2}}+t\mu^m)(-\log\mu)^k.
    \]
    Choosing $\mu = t^{-\frac{2}{\dhom}}$ gives, via Lemma \ref{K_function_identity},
    \[
        \left(\int_0^{t^2}\mu(s,A)^2\,ds\right)^{1/2} \lesssim t^{1-\frac{2m}{\dhom}}(\log t)^k,\quad t>1.
    \]
    Since $\frac{\dhom}{m}>2,$ it follows from Lemma \ref{Hardy_type_inequality} that $A \in \Ec_{\frac{\dhom}{m},k}.$

    In the general case, let $P\geq 1$ be an $H$-elliptic operator of order $n.$ By Dave-Haller's Weyl asymptotics, we have $P^{-\frac{t}{np}}\in \Lc_{\frac{\dhom}{t},\infty}=\Ec_{p,0}.$ Therefore, for $A\in \Psi^{-m,k}_H(M),$ we have
    \[
        A = AP^{\frac{t}{n}}\cdot P^{-\frac{t}{n}} \in \Psi^{-m+t,k}\cdot \Ec_{\frac{Q}{t},0}.
    \]
    If we choose $t$ so that $0>-m+t> -\frac{\dhom}{2},$ then we get $\Psi^{-m+t,k}_H(M)\subset \Ec_{\frac{Q}{m-t},k},$ and hence $\Psi^{-m,k}_H(M)\subset \Ec_{\frac{Q}{m},k}$ by \eqref{holder_type}.
\end{proof}

\section{Universal measurability of van Erp-Yuncken pseudodifferential operators}\label{dave_haller_residue}
Throughout this section, we work in the same setting as Dave-Haller \cite{DaveHaller2020}: $(M,H)$ is a compact Carnot manifold, and $\Psi^m_H(M)$ denotes the order $m$ pseudodifferential operators in the van Erp-Yuncken sense.

Fix an order $m$ Rockland differential operator $P,$ and assume that $P\geq 1.$ Such operators always exist although we might have $m>2.$ Since Rockland operators are hypoelliptic, it is easy to see that $\mathrm{Ker}(1+P^*)=\{0\}$ and therefore $P$ has self-adjoint extension. Dave and Haller proved that the complex powers $P^{-s},$ defined via spectral theory, are Van Erp-Yuncken pseudodifferential operators and more specifically
\[
    P^{-s} \in \Psi^{-ms}_H(M).
\]
The main result of \cite{DaveHaller2020} is a Weyl formula for the eigenvalues of $P.$ In particular, $P^{-s}$ belongs to the trace class when $\Re(s)$ is sufficiently large. What is more, they proved the following:
\begin{theorem}\label{DH_analyticity_theorem}
    Let $(M,H)$ be a compact Carnot manifold, and let $\nu$ be a density on $M.$ Let $P$ be an order $m$ $H$-elliptic differential operator, and assume that $P\geq 1$ as an operator on $L_2(M,\nu).$ The family of complex powers
    \[
        \{P^{-s}\}_{s\in \Cplx}
    \]
    defined by spectral theory for operators on $L_2(M,\nu)$ is a pseudodifferential family with order function
    \[
        \mu(s) = -ms.
    \]
\end{theorem}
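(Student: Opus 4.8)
The claim is essentially the complex powers theorem of Dave and Haller, and indeed the membership $P^{-s}\in\Psi^{-ms}_H(M)$ for each fixed $s$ has already been recalled above; what remains is the holomorphic dependence on $s$ in the precise sense of a holomorphic pseudodifferential kernel family with order function $\mu(s)=-ms$. The plan is therefore to produce an explicit family $\{\Pb(s)\}_{s\in\Cplx}$ of $r$-fibred distributions on $\Tb_HM$ with $\Pb(s)_1=P^{-s}$, and to verify the two clauses of the definition: that $s\mapsto\Pb(s)$ is holomorphic as an $\Ec'_r(\Tb_HM)$-valued map, and that $(\Lc_Z+ms)\Pb(s)\in C^\infty_p(\Tb_HM,\dens_r)$ for all $s$. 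The translation of the second clause from Dave and Haller's cocycle formulation to this infinitesimal one is exactly the computation in the proof of Corollary \ref{consistency}, now carried along holomorphically in $s$: if $\fbb_s(\lambda):=\lambda^{ms}(\alpha_\lambda)_*\Pb(s)-\Pb(s)$ lies in $C^\infty_p(\Tb_HM,\dens_r)$ and depends holomorphically on $(s,\lambda)$, then differentiating at $\lambda=1$ yields $(\Lc_Z+ms)\Pb(s)\in C^\infty_p(\Tb_HM,\dens_r)$, holomorphically in $s$.

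To construct $\Pb(s)$ I would use the Cauchy integral representation of complex powers. Since $P$ is $H$-elliptic and $P\geq 1$, the resolvent $(P-\lambda)^{-1}$ exists for $\lambda$ in a region $\Lambda$ avoiding $\Sp(P)\subset[1,\infty)$; the substantive step, which is the heart of \cite[Sections 6--7]{DaveHaller2020}, is to construct a parametrix $Q_\lambda$ of $P-\lambda$ inside a parameter-dependent refinement of the van Erp--Yuncken calculus with uniform control as $\lambda\to\infty$ in $\Lambda$, together with its lift $\mathbb{Q}_\lambda\in\Ec'_r(\Tb_HM)$ to the $H$-tangent groupoid. Here $H$-ellipticity of $P$ guarantees that each model operator $P_x$ on the osculating group $T_HM_x$ satisfies the Rockland condition, so that $(\mathbb{Q}_\lambda)_0$ can be taken to be the resolvent parametrix of $P_x$, built from the homogeneous functional calculus on graded nilpotent Lie groups. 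One then sets
\[
    \Pb(s):=\frac{1}{2\pi i}\int_{\Gamma}\lambda^{-s}\,\mathbb{Q}_\lambda\,d\lambda
\]
for a suitable contour $\Gamma$ encircling $\Sp(P)$, and corrects by the (holomorphic, $r,s$-properly supported) groupoid lift of the smoothing remainder $\Pb(s)_1-P^{-s}$ so that $\Pb(s)_1=P^{-s}$ exactly. The uniform-in-$\lambda$ estimates let one differentiate under the integral sign, giving holomorphy of $s\mapsto\Pb(s)$; and since $\mathbb{Q}_\lambda$ has order $-m$ uniformly in $\lambda$, the Cauchy integral produces order $-ms$ — at $h=0$ this reflects the exact $\delta_\lambda$-homogeneity of degree $-ms$ of the convolution kernel of $P_x^{-s}$ on $T_HM_x$, and for $h\neq0$ it is pseudolocality of $P^{-s}$ — which can equivalently be read off the cocycle $\fbb_s(\lambda)$ as in the previous paragraph.

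The main obstacle is entirely the parameter-dependent parametrix and its uniform estimates, i.e.\ the analysis in \cite[Sections 6--7]{DaveHaller2020} (and, in the Heisenberg case, \cite{Ponge2007}), which I would invoke as a black box rather than reprove. Everything else is routine: convergence of the contour integral (for $\Re(s)$ large, followed by analytic continuation), differentiation under the integral sign from the uniform estimates, the harmless smoothing correction ensuring $\Pb(s)_1=P^{-s}$, and the passage from the cocycle formulation to the condition $(\Lc_Z+ms)\Pb(s)\in C^\infty_p(\Tb_HM,\dens_r)$ of Definition \ref{differential_psido_definition}, which is precisely the parameter-dependent version of the computation already done in Corollary \ref{consistency}.
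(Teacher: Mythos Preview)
The paper does not prove this theorem at all: it is stated as a result of Dave and Haller and simply cited, with the surrounding text attributing both the membership $P^{-s}\in\Psi^{-ms}_H(M)$ and the holomorphic-family structure to \cite{DaveHaller2020}. Your proposal is therefore already more than the paper offers, and it correctly identifies that the substance lies in \cite{DaveHaller2020} and must be invoked as a black box.

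One remark on the route you sketch. You propose the Cauchy integral $\Pb(s)=\frac{1}{2\pi i}\int_\Gamma \lambda^{-s}\mathbb{Q}_\lambda\,d\lambda$ built from a parameter-dependent resolvent parametrix. This is a standard and workable approach to complex powers, but it is not quite what Dave and Haller do: their paper is organised around the heat operator $e^{-tP}$ and its short-time asymptotics, and the complex powers are obtained via the Mellin transform $P^{-s}=\Gamma(s)^{-1}\int_0^\infty t^{s-1}e^{-tP}\,dt$, with the holomorphic-family structure read off the heat-kernel expansion on the $H$-tangent groupoid. Both approaches yield the same conclusion and both require the same kind of uniform parametrix construction on $\Tb_HM$; the heat-kernel route has the advantage that positivity of $P$ gives exponential decay for free, while the resolvent route you describe needs sector estimates on $\mathbb{Q}_\lambda$ as $|\lambda|\to\infty$. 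Since you explicitly defer those estimates to \cite{DaveHaller2020} anyway, the distinction is largely cosmetic here.
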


The following theorem is a combination of Theorem \ref{DH_analyticity_theorem} and Proposition \ref{where_are_the_poles}. This is also contained within \cite[Proposition 2]{DaveHaller2020}.
\begin{corollary}\label{zeta_function_analyticity}
    Let $\nu$ be a density on $M,$ and let $A \in \Psi^0_H(M).$ Let $P$ be an order $m$ $H$-elliptic differential operator on $M$ such that $P\geq 1$ with respect to $\nu.$ Identifying $A$ with an operator on $L_2(M,\nu),$ The zeta-function
    \[
        \zeta_{A,P}(s) := \Tr(AP^{-s}),\quad \re(s)>\frac{\dhom}{m}
    \]
    admits a meromorphic continuation to $\Cplx,$ with at most simple poles located at the points $\{\frac{\dhom-j}{m}\;:\; j\geq 0\}.$
\end{corollary}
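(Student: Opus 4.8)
The plan is to deduce the corollary directly from Theorem~\ref{DH_analyticity_theorem} and Corollary~\ref{meromorphic_extension_of_trace}. The only substantive point is that left multiplication by the fixed order-zero operator $A$ turns the holomorphic family $\{P^{-s}\}$ into a holomorphic pseudodifferential family with the \emph{same} order function $\mu(s)=-ms$; once this is known, the meromorphic continuation and the location of the poles are immediate.

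First I would fix, via Theorem~\ref{DH_analyticity_theorem}, a holomorphic pseudodifferential kernel family $\mathbb{P}=\{\mathbb{P}(s)\}_{s\in\Cplx}$ with order function $\mu(s)=-ms$ and $\mathbb{P}(s)_1=P^{-s}$, and choose any $\mathbb{A}\in\Psib^0_H(M)$ with $\mathbb{A}_1=A$. Using the convolution product on $r$-fibred distributions on $\Tb_HM$, set $\mathbb{Q}(s):=\mathbb{A}*\mathbb{P}(s)$. Since restriction to the fibre $h$ is multiplicative, $\mathbb{Q}(s)_1=\mathbb{A}_1*\mathbb{P}(s)_1=AP^{-s}$, where $*$ at $h=1$ is operator composition. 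Because $\Lc_Z$ is a derivation of the convolution algebra,
\[
    (\Lc_Z-\mu(s))\mathbb{Q}(s) = (\Lc_Z\mathbb{A})*\mathbb{P}(s) + \mathbb{A}*\bigl((\Lc_Z-\mu(s))\mathbb{P}(s)\bigr),
\]
and both summands lie in $C^\infty_p(\Tb_HM,\dens_r)$: the first because $\Lc_Z\mathbb{A}$ is smoothing (by Definition~\ref{differential_psido_definition} with $m=0$) and convolving a smoothing kernel with the $r$-fibred distribution $\mathbb{P}(s)$ is again smoothing, the second because $(\Lc_Z-\mu(s))\mathbb{P}(s)$ is smoothing by hypothesis and is absorbed by $\mathbb{A}$. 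Hence $\mathbb{Q}(s)\in\Psib^{\mu(s)}_H(M)$ for every $s$, and holomorphicity of $s\mapsto\mathbb{Q}(s)$ in $\Ec'_r(\Tb_HM)$ follows from holomorphicity of $s\mapsto\mathbb{P}(s)$ together with the continuity of $\mathbb{R}\mapsto\mathbb{A}*\mathbb{R}$. So $\{AP^{-s}\}_{s\in\Cplx}$ is a holomorphic pseudodifferential family with order function $\mu(s)=-ms$; alternatively one can invoke the composition theory for holomorphic families of \cite[Section~7]{DaveHaller2020}.

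Next I would apply Corollary~\ref{meromorphic_extension_of_trace} to this family. Its hypothesis holds since $\Re(\mu(s))=-m\Re(s)<-\dhom$ precisely when $\Re(s)>\dhom/m$, and on that half-plane $AP^{-s}$ has order with real part $<-\dhom$, so it is trace class and its Hilbert space trace coincides with the global trace of \eqref{global_trace_on_operators}; thus $\Tr(AP^{-s})=\zeta_{A,P}(s)$ there. Corollary~\ref{meromorphic_extension_of_trace} then produces a meromorphic continuation of $\zeta_{A,P}$ to $\Cplx$ with poles contained in $\mu^{-1}(\{-\dhom,-\dhom+1,\dots\})=\{(\dhom-j)/m:j\geq 0\}$. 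Finally, since $\mu'(s)\equiv -m\neq 0$, the local description of the poles in Proposition~\ref{where_are_the_poles} (as carried into Corollary~\ref{meromorphic_extension_of_trace}) shows each such point is a pole of order at most one.

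The step I expect to be the main obstacle is the kernel-level bookkeeping in the second paragraph: namely that $C^\infty_p(\Tb_HM,\dens_r)$ is a two-sided ideal for the convolution on $\Tb_HM$ with $r,s$-properness preserved, and that this convolution is continuous enough to transport holomorphicity. Both facts are part of the van Erp--Yuncken calculus, but they are exactly what makes the composition argument legitimate; everything else is a direct citation of the results established above.
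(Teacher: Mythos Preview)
Your proposal is correct and follows essentially the same approach the paper indicates: the text states that the corollary ``is a combination of Theorem~\ref{DH_analyticity_theorem} and Proposition~\ref{where_are_the_poles}'' (equivalently Corollary~\ref{meromorphic_extension_of_trace} after integrating over $M$), and you carry out exactly this combination, supplying the expected detail that left multiplication by the fixed $\mathbb{A}\in\Psib^0_H(M)$ preserves the holomorphic kernel family via the derivation identity for $\Lc_Z$. Your simplicity argument from $\mu'(s)=-m\neq 0$ is also the intended one.
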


\begin{remark}
    While it is important to note that the notation $\Tr$ in Corollary \ref{zeta_function_analyticity} refers specifically to the Hilbert space trace on the operator ideal $\Lc_1(L_2(M,\nu)),$ we also have that
    \[
        AP^{-s} \in \Psi^{-ms}_H(M)
    \]
    and hence the global trace defined in \eqref{global_trace_on_operators} can also be evaluated on $AP^{-s}.$ In fact these two computations give precisely the same answer, and the global trace $\Tr$ on $\Psi_H(M)$ in \eqref{global_trace_on_operators} coincides with the Hilbert space trace on $\Lc_1(L_2(M,\nu)).$

    The Hilbert space trace depends \emph{a priori} on the choice of density $\nu$ while the global trace does not. However, since the Hilbert space trace $\Tr$ is unitarily invariant the choice of density has no effect on the value of the trace.
\end{remark}

We will be only be concerned with the residue at the largest pole $\frac{\dhom}{m}.$ For brevity we introduce the following notation.
\begin{definition}
    Denote
    \[
        C(A,P) := \frac{m}{\dhom}\mathrm{Res}_{s=\frac{\dhom}{m}} \zeta_{A,P}(s).
    \]
\end{definition}

The critereon for universal measurability in terms of zeta functions immediately implies the following:
\begin{corollary}\label{microlocal_form_of_trace_theorem}
    Let $\varphi$ be a normalised trace on the ideal $\Lc_{1,\infty}.$ For any $A \in \Psi^{0,0}_H(M),$ we have
    \[
        \varphi(AP^{-\frac{\dhom}{m}}) = C(A,P)
    \]
\end{corollary}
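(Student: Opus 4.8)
The plan is to recall the general machinery relating traces on $\Lc_{1,\infty}$ to zeta-function residues, and then to check that $AP^{-\dhom/m}$ lies in $\Lc_{1,\infty}$ and is \emph{universally measurable} in the sense of that machinery, with the common value of all normalised traces being exactly $C(A,P)$. Concretely, I would invoke the following standard facts (see \cite{LordSukochevZanin2021}): if $T\in\Lc_{1,\infty}$ is positive and the zeta function $s\mapsto \Tr(T^s)$ extends meromorphically past $s=1$ with at most a simple pole there, then $T$ is universally measurable, i.e. $\varphi(T)$ takes the same value for every normalised trace $\varphi$, and that common value equals $\operatorname{Res}_{s=1}\Tr(T^{s})$. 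More generally, for not-necessarily-positive $T$, one uses that $\varphi(T)$ is determined by the residue of $s\mapsto\Tr(T B^{s-1})$ at $s=1$, where $B$ is a positive operator with $\mu(n,B)\sim(n+1)^{-1}$, provided this zeta function extends meromorphically with a simple pole; the relevant statement is \cite[Theorem 8.3.6 / Chapter 9]{LordSukochevZanin2021} or the criterion for universal measurability recalled in that reference.

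The key steps, in order, are as follows. First, observe that $P^{-\dhom/m}\in\Psi_H^{-\dhom}(M)$ by Theorem \ref{DH_analyticity_theorem} with $\mu(s)=-ms$ evaluated at $s=\dhom/m$, so $AP^{-\dhom/m}\in\Psi_H^{-\dhom}(M)$; by Theorem \ref{main_theorem_e_values_version} (or directly from the Weyl law of Dave--Haller) this operator lies in $\Lc_{1,\infty}(L_2(M,\nu))$. Second, the operator $P^{-\dhom/m}$ itself is a positive operator with $\mu(n,P^{-\dhom/m})\sim c(n+1)^{-1}$ for a suitable constant $c$, hence after rescaling it serves as the ``weight'' $B$ in the zeta-function criterion. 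Third, apply Corollary \ref{zeta_function_analyticity}: the map $s\mapsto \Tr(A P^{-s}) = \zeta_{A,P}(s)$ is holomorphic for $\Re(s)>\dhom/m$ and extends meromorphically to $\Cplx$ with at most a simple pole at $s=\dhom/m$. Re-parametrising $s = \tfrac{\dhom}{m}z$ turns this into the statement that $z\mapsto \Tr\big((AP^{-\dhom/m})\,P^{-\frac{\dhom}{m}(z-1)}\big)$ has a simple pole at $z=1$ with residue $\tfrac{m}{\dhom}\operatorname{Res}_{s=\dhom/m}\zeta_{A,P}(s) = C(A,P)$. Fourth, feed this into the universal-measurability criterion: since the zeta function associated to $T=AP^{-\dhom/m}$ relative to the weight $P^{-\dhom/m}$ has a simple pole at the critical point, $T$ is universally measurable and $\varphi(T) = C(A,P)$ for every normalised trace $\varphi$, which is exactly the claim.

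The main obstacle I anticipate is not analytic but bookkeeping: matching conventions so that the ``zeta function'' appearing in the measurability theorem of \cite{LordSukochevZanin2021} (typically phrased as $\zeta(z)=\Tr(T V^{z})$ for a fixed positive weight $V$, or as $\operatorname{Res}_{z=1}\Tr(T^{z})$ in the positive case) is literally the same meromorphic function, up to the affine change of variable $s=\tfrac{\dhom}{m}z$, as the $\zeta_{A,P}$ of Corollary \ref{zeta_function_analyticity}. One must be careful that: (a) the weight $P^{-\dhom/m}$ has the correct spectral asymptotics $\mu(n,\cdot)\asymp (n+1)^{-1}$ — this is guaranteed by the Dave--Haller Weyl law since $P^{-\dhom/m}$ has homogeneous order exactly $-\dhom$; (b) the pole at the critical exponent is genuinely simple, which is exactly what Corollary \ref{zeta_function_analyticity} asserts; and (c) the residue formula produces $C(A,P)$ with the normalisation $\tfrac{m}{\dhom}$ absorbed correctly. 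None of these require new estimates — they follow from results already established in the excerpt — so the proof is essentially a citation of the zeta-function criterion for universal measurability combined with Corollary \ref{zeta_function_analyticity}, and I would present it in a few lines.
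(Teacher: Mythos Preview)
Your proposal is correct and follows essentially the same route as the paper: set $V=P^{-\dhom/m}$, reparametrise $\zeta_{A,P}$ so that the pole sits at $z=1$ with residue $C(A,P)$, and invoke the universal-measurability criterion from \cite{LordSukochevZanin2021} (the paper cites Theorem~9.1.5(a) specifically). One caution: do not justify $AP^{-\dhom/m}\in\Lc_{1,\infty}$ via Theorem~\ref{main_theorem_e_values_version}, since that theorem is downstream of this corollary; use only the Dave--Haller Weyl law you mention parenthetically.
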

\begin{proof}
    Let $V = P^{-\frac{\dhom}{m}},$ and
    \[
        \zeta_{A,V}(s) = \Tr(AV^s),\quad \Re(s)>1.
    \]
    By Corollary \ref{zeta_function_analyticity}, $\zeta_{A,V}$ admits a meromorphic continuation to the half-plane $\Re(s)>0,$ with a simple pole at $s=1$ and residue
    \[
        c = \lim_{s\to 1}(s-1)\zeta_{A,P}(\frac{sm}{\dhom}) = \frac{m}{\dhom}\lim_{s\to \frac{\dhom}{m}} (s-\frac{\dhom}{m})\zeta_{A,P}(s) = C(A,P).
    \]
    The content of \cite[Theorem 9.1.5(a)]{LordSukochevZanin2021} is that this implies $\varphi(AV) = c$ for all normalised traces $\varphi$ on $\Lc_{1,\infty}.$
\end{proof}

\begin{corollary}
    For all $A \in \Psi^{-\dhom,0}_H(M)$ for any any normalised trace $\varphi$ on $\Lc_{1,\infty},$ we have
    \[
        \varphi(A) = \frac{1}{\dhom}\Res_0(A).
    \]
\end{corollary}
\begin{proof}
Let $B = AP^{\frac{\dhom}{m}} \in \Psi^{0,0}_H(M).$
Observe that
\[
    C(B,P) = \frac{1}{\dhom}\Res_0(BP^{-\frac{\dhom}{m}}).
\]
Hence Corollary \ref{microlocal_form_of_trace_theorem} yields
\[
    \varphi(A) = \varphi(BP^{-\frac{\dhom}{m}}) = \frac{1}{\dhom}\Res_0(BP^{-\frac{\dhom}{m}}) = \frac{1}{\dhom}\Res_0(A).
\]
\end{proof}

\section{Dixmier measurability and the trace theorem for log-polyhomogeneous operators}\label{log_poly_proof_section}
In this section we prove Theorem \ref{main_Dixmier}, which is a trace theorem for $k$-log-polyhomogeneous pseudodifferential operators. The main technical tool, proved in Appendix \ref{modulation_appendix}, is the following:
\begin{theorem}\label{diagonal_dixmier}
    Let $\Hc$ be a Hilbert space, let $0\leq V\in \Lc_{1,\infty}(\Hc)$ satisfy
    \[
        \lim_{t\to\infty} t\mu(t,V) = 1.
    \]
    Let $k\geq 0,$ and let $A\in \Bc(\Hc)$ be such that for some $p>1,$ we have
    \[
        AV^{-1/p} \in \Ec_{p,k}(\Hc).
    \]
    If
    \[
        \lim_{s\to 0} s^k\Tr(AV^s) = c
    \]
    then for every Dixmier trace $\Tr_{\omega,k}$ on $\Ec_{1,k},$ we have
    \[
        \Tr_{\omega,k}(A) = \frac{1}{(k+1)!}c.
    \]
\end{theorem}

\begin{theorem}\label{main_Dixmier}
    Let $(M,H)$ be a compact Carnot manifold with homogeneous dimension $\dhom.$ Let $k\geq 1,$ and $A \in \Psi^{-\dhom,k}_H(M).$ For all extended limits $\omega,$ we have
    \[
        \Tr_{\omega,k}(A) = \frac{1}{\dhom(k+1)!}\Res_k(A).
    \]
\end{theorem}
\begin{proof}
    By Theorem \ref{schatten_class_theorem}, $A\in \Ec_{1,k}.$

    Let $P\geq 1$ be an elliptic operator on $M$ of order $m.$ By Dave-Haller's Weyl asymptotics, we have $V:= P^{-\frac{\dhom}{m}}\in \Lc_{1,\infty}$ and $\lim_{n\to\infty} n\mu(n,P^{-1/m}) \neq 0.$ Modifying $P$ by a constant, we may assume that
    \[
        \lim_{n\to\infty} n\mu(n,V) = 1.
    \]
    By Dave-Haller's theorem (Theorem \ref{DH_analyticity_theorem}), the family
    \[
        \{AP^{-\dhom s/m}\}_{s\in \Cplx}
    \]
    is a holomorphic family of log-polyhomogeneous pseudodifferential operators, with order function $\mu(s)=-\dhom-\dhom s,$ and
    \[
        \lim_{s\to 0}s^{k+1}\Tr(AV^s) = \lim_{s\to 0}s^{k+1}\Tr(AP^{-\dhom s/m}) = \frac{1}{\dhom}\Res_k(A).
    \]
    Theorem \ref{diagonal_dixmier} implies
    \[
        \Tr_{\omega,k}(A) = \frac{1}{\dhom(k+1)!}\Res_k(A).
    \]
\end{proof}

Similar to Theorem \ref{main_theorem_e_values_version}, we can restate Theorem \ref{main_Dixmier} in a form that makes no reference to traces.
The following is an immediate consequence of Theorem \ref{main_Dixmier} and Theorem \ref{spectrality}.
\begin{corollary}
    Let $(M,H)$ be a compact Carnot manifold with homogeneous dimension $\dhom.$ Let $k\geq 1,$ and $A \in \Psi^{-\dhom,k}_H(M).$
    For any ordered eigenvalue sequence $\{\lambda(n,A)\}_{n=0}^\infty$ of $T$ we have
    \[
        \sum_{n=0}^N \lambda(n,A) = \frac{1}{\dhom(k+1)!}\Res_k(A)\log(N+2)^{k+1}+o(\log(N+2)^{k+1}),\quad N\to\infty.
    \]
\end{corollary}

\section{Non-compact case for the trace theorem}
Theorem \ref{k_log_main_thm} was proved above for compact manifolds $M.$ This was essential in the proof since we needed the existence of an $H$ elliptic operator $P$ with $P^{-1}$ compact. However, having proved the theorem for compact manifolds the general case can be deduced as follows.
\begin{theorem}
    Let $(M,H)$ be a Carnot manifold, and let $\nu$ be a density on $M.$ Let $k\geq 0$ and $A \in \Psi^{-\dhom,0}_H(M).$ Assume that $A$ is compactly supported on the left in the sense that there exists $f\in C^\infty_c(M)$ such that
    \[
        f(x)A(x,y) = A(x,y)
    \]
    (as an equality of distributions). Then $\mathrm{res}_0(A)$ is compactly supported, and $A$ can be realised as an operator belonging to $\Lc_{1,\infty}(L_2(M,\nu)).$ For any normalised trace $\varphi$ on $\Lc_{1,\infty}$ we have
    \[
        \varphi(A) = \frac{1}{\dhom}\mathrm{Res}_0(A)
    \]
    where $\mathrm{Res}_0(A) = \int_M \mathrm{res}_0(A).$

    Similarly, if $k\geq 1$ and $A \in \Psi^{-\dhom,k}$ is compactly supported on the left in the same sense, then $\mathrm{res}_k(A)$ is compactly supported, $A$ can be realised as an operator beloning to $\Ec_{1,k}(L_2(M,\nu)),$ and for all extended limits we have
    \[
        \Tr_{\omega,k}(A) = \frac{1}{\dhom(k+1)!}\Res_k(A)
    \]
    where $\Res_k(A) = \int_M \mathrm{res}_k(A).$
\end{theorem}
\begin{proof}
    Since the support of $A$ is $r$-proper, the distribution $A(x,y)$ is compactly supported in both variables. Assume without loss of generality that $A$ is supported in $U\times U,$ where $U$ is the domain of a coordinate chart which trivialises each of the bundles $H_1,\ldots,H_N$ in the filtration of $TM.$
    Let $d=\mathrm{dim}(M),$ and let
    \[
        \psi:U\to \mathbb{T}^d
    \]
    be a coordinate chart valued in the $d$-torus $\mathbb{T}^d.$

    Under $\psi,$ $A$ is transplanted to an $r$-fibred distribution $A_{\psi}$ on $\Circ^d\times \Circ^d,$ where $r:\Circ^d\times \Circ^d\to \Circ^d$ is the map $r(x,y) = y.$
    By assumption, $A_{\psi}$ is supported in some $K\times K\subset U\times U,$ where $K\subset U$ is compact.

    Let $V$ be the pullback by $\psi$ on functions, i.e.
    \[
        Vf = f\circ \psi^{-1},\quad f \in C^{\infty}(\psi(U)).
    \]
    Note that $V$ induces a partial isometry
    \[
        V:L_2(\psi(U),\psi_*\nu)\to L_2(M,\nu).
    \]
    with $V^*V = \mathrm{id}$ and $VV^*$ is the projection onto functions supported in $U.$
    Since $A$ is supported in $U\times U$ we have
    \[
        A_{\psi} = V^*AV.
    \]
    In particular, $A_{\psi} \in \Ec_{1,k}(L_2(\psi(U),\psi_*\nu)).$
    For any trace $\varphi$ on $\Ec_{1,k},$
    \[
        \varphi(AVV^*) = \varphi(A_{\psi}).
    \]
    Since $A$ is supported in $U\times U,$ we have $AVV^*=A,$ and hence
    \[
        \varphi(A)=\varphi(A_{\psi})
    \]
    where the trace on the left is in $L_2(M,\nu),$ and on the right is in $L_2(\psi(U),\psi_*\nu).$ The pushforward of $H_1,\ldots,H_N$ to $\psi(U)$ are vector bundles denoted $\psi_* H^1,\cdots,\psi_{*}H^N,$ giving $\psi(U)$ the structure of a Carnot manifold. Since each $\psi_*H^1,\ldots,\psi_*H^N$ are trivial, they can be trivially extended to vector bundles on the whole $\mathbb{T}^d.$ Denote these extensions by $\widetilde{H}^1,\ldots,\widetilde{H}^N.$ We claim that
    \[
        A_{\psi} \in \Psi^{-\dhom,k}_{\widetilde{H}}(\Circ^d).
    \]
    To see this, note that $\psi:U\to \psi(U)$ maps the filtreation $H$ to the filtration $\psi^*H,$ and therefore induces a map
    \[
        \Tb\psi:\Tb_HU\to \Tb_{\widetilde{H}}\psi(U) \subseteq \Tb_{\widetilde{H}}(\Circ^d).
    \]
    Let $\Ab \in \Psib^{-\dhom,k}_H(M)$ be a pseudodifferential kernel with $A=\Ab_1.$ The image of $\Ab$ under $\Tb\psi$ is denoted $\Ab_{\psi},$ and we have
    \[
        (\Ab_{\psi})_1 = A_{\psi}.
    \]
    Because $\psi$ preserves the Carnot structure, the zoom vector field on $\Tb_HM$ pushes forward under $\Tb\psi$ to the zoom vector field on $(\psi(U),\psi_*H).$ The family $\Ab_{\psi}$ therefore belongs to $\Psib^{-\dhom,k}_{\widetilde{H}}(\Circ^d).$
    Hence, $A_{\psi} \in \Psi^{-\dhom,k}_{\widetilde{H}}(\Circ^d).$

    By Theorem \ref{k_log_main_thm}, we have
    \[
        \varphi(A) = \varphi(A_{\psi}) = \frac{1}{\dhom(k+1)!}\Res_k(A_{\psi})
    \]
    where $\varphi$ is an arbitrary normalised trace for $k=0$ and a Dixmier trace for $k>0.$
    The residue is diffeomorphism invariant by definition, so we have $\Res_k(A)=\Res_k(A_{\psi}),$ and this completes the proof.
\end{proof}

\appendix
\section{Characterisation of log-polyhomogeneous functions}\label{characterisation_appendix}
In this section, $\delta_t$ denotes an arbitrary group of dilations on $\Rl^N.$ That is, $\delta_t(e_j) = t^{w_j}e_j$ for all $1\leq j\leq N,$ where $w_j\geq 0.$ Let $Z$ be the differential operator generating $\delta,$ that is
\[
    Z = \sum_{j=1}^N w_jx_j\partial_{x_j}.
\]
This is sometimes called the Euler vector field.
Let $|\cdot|$ be a norm on $\Rl^N$ which is homogeneous with respect to $\delta,$ i.e. $|\delta_t\xi|=t|\xi|.$


\begin{definition}\label{k_log_homog_def}
    Say that $f \in C^\infty(\Rl^N)$ is approximately $k$-log-homogeneous of order $m\in \Cplx$ if
    \[
        (Z-m)^{k+1}f \in \Sc(V).
    \]
\end{definition}
Recall that the approximately homogeneous functions $f$ are the ``approximate eigenfunctions" of the Euler vector field $Z,$ i.e. $(Z-m)f \in \Sc(\mathbb{R}^d).$ The meaning of Definition \ref{k_log_homog_def} is that approximately $k$-log-homogeneous functions are ``approximate generalised eigenfunctions of order $k+1$" of $Z.$

The following lemma is not essentially different from \cite[Proposition 2.2]{Taylor1984}.
\begin{lemma}\label{taylor_lemma}
    Let $f\in C^\infty(\Rl^N)$ be approximately homogeneous\footnote{i.e., approximately $0$-log-homogeneous} of order $m\in \Cplx.$ There exists a unique function $H\in C^\infty(\Rl^N\setminus \{0\}),$ homogeneous of order $m$, and $R \in C^{\infty}(\Rl^N\setminus \{0\})$ which coincides with a Schwartz class function outside a neighbourhood of zero such that $f = H+R.$ Moreover, $H$ and $R$ are given by the formulae
    \[
        H(\xi) := \lim_{t\to\infty} t^{-m}f(\delta_t\xi),\quad \xi\neq 0
    \]
    and
    \[
        R(\xi) = -\int_1^{\infty} t^{-m-1}((Z-m)f)(\delta_t\xi)\,dt.
    \]
\end{lemma}
\begin{proof}
    The uniqueness of the decomposition $f=H+R$ is clear from the formula $H(\xi) = \lim_{t\to\infty} t^{-m}f(\delta_t\xi).$

    Abbreviate $K = (Z-m)f.$ By the fundamental theorem of calculus we have
    \[
        f(\xi)-t^{-m}f(\delta_t\xi) = -\int_1^t s^{-m-1}K(\delta_s\xi)\,ds.
    \]
    Since $K\in \Sc(\Rl^N),$ for any $p\geq 0$ and any multi-index $\alpha,$ there exists a constant $C_{\alpha,p}$ such that
    \[
        |\partial_{\xi}^{\alpha}K(\delta_s\xi)| \leq C_{\alpha,p}s^{[\alpha]}(1+s|\xi|)^{-p}.
    \]
    Hence, choosing $p$ sufficiently large,
    \[
        \int_{1}^{\infty} s^{-\Re(m)-1}|\partial_{\xi}^\alpha K(\delta_s\xi)|\,ds \leq C_{\alpha,p}\int_{1}^{\infty} s^{-\Re(m)+[\alpha]-1}(1+s|\xi|)^{-p}\,ds \lesssim |\xi|^{-p}.
    \]
    Therefore the integral
    \[
        R(\xi) = -\int_1^{\infty} s^{-m-1}K(\delta_s\xi)\,ds
    \]
    defines a smooth function on $\Rl^{N}\setminus \{0\},$ with all derivatives rapidly decaying at infinity, and also the limit
    \[
        \lim_{t\to\infty} f(\xi)-t^{-m}f(\delta_t\xi)
    \]
    exists.
\end{proof}

\begin{corollary}\label{taylor_restated_truncated}
    Let $\phi\in C^\infty_c(\Rl)$ be identically $1$ near zero. A function $f\in C^{\infty}(\Rl^N)$ is essentially homogeneous of order $m\in \Cplx$ if and only if there exists a homogeneous function $H$ of order $m$ and a Schwartz class function $S$ such that
    \[
        f(\xi) = (1-\phi(|\xi|))H(\xi)+S(\xi).
    \]
\end{corollary}
\begin{proof}
    Let $H$ and $R$ be the functions from Lemma \ref{taylor_lemma}. Since $\xi\mapsto \phi(|\xi|)$ is smooth and compactly supported, we have
    \[
        f(\xi) = \phi(|\xi|)f(\xi) + (1-\phi(|\xi|))H(\xi) + (1-\phi(|\xi|))R(\xi).
    \]
    Defining $S(\xi) = \phi(|\xi|)f(\xi)+(1-\phi(|\xi|))R(\xi)$ completes the proof.
\end{proof}

\begin{lemma}\label{taylor_logarithmic}
    Let $k\geq 0.$ A function $f \in C^{\infty}(\Rl^N)$ is $k$-log homogeneous of order $m$ if and only if there exist homogeneous functions $H_0,\ldots,H_k$, all of order $m,$ a Schwartz class function $S$ and a smooth function $\phi\in C^{\infty}_c(\Rl)$ equal to $1$ near zero such that
    \[
        f(\xi) = (1-\phi(|\xi|))\sum_{j=0}^{k} \log(|\xi|)^{j}H_j(\xi) + S(\xi).
    \]
\end{lemma}
\begin{proof}
    It is easy to check by differentiation that if $f$ has the stated form then it is $k$-log-homogeneous. We concentrate on proving the converse.

    We may assume without loss of generality that $\Re(m)<0.$ Indeed, otherwise replace $f$ by
    \[
        \widetilde{f}(\xi) = (1-\phi(|\xi|))|\xi|^{-n}f(\xi)
    \]
    for sufficiently large $n,$ by the Leibniz rule we will have $(Z-m+n)^{k+1}\widetilde{f}\in \Sc(\Rl^N),$ and if $\widetilde{f}$ has the required form, then so does $f.$

    The proof is by induction on $k,$ with the $k=0$ case being Corollary \ref{taylor_restated_truncated}.

    Suppose the assertion is true for $k\geq 0,$ and $f$ is $k+1$-log polyhomogeneous of order $m.$ Let $K = (Z-m)f,$ which is $k$-log-polyhomogeneous. We have
    \[
        f(\xi)-t^{-m}f(\delta_t\xi) = \int_t^1 s^{-m-1}K(\delta_s\xi)\,ds.
    \]
    Since $\Re(m)<0,$ this converges pointwise in $\xi$ as $t\to 0,$ So
    \[
        f(\xi) = \int_0^1 s^{-m-1}K(\delta_s\xi)\,ds.
    \]
    Using the inductive hypothesis on $K,$ there exists a Schwartz-class function $S$ and homogeneous functions $H_0,\ldots,H_k$ such that
    \begin{align*}
        f(\xi) &= \sum_{j=0}^k \int_0^1 s^{-m-1}(1-\phi(s|\xi|))\log(s|\xi|)^jH_j(\delta_s\xi)\,ds + \int_0^1 s^{-m-1}S(\delta_s\xi)\,ds\\
               &= \sum_{j=0}^k \int_0^1 s^{-1}(1-\phi(s|\xi|))\log(s|\xi|)^j\,ds H_j(\xi) + \int_0^1 s^{-m-1}S(\delta_s\xi)\,ds\\
               &= \sum_{j=0}^k \int_0^{|\xi|} s^{-1}(1-\phi(s))\log(s)^j\,ds H_j(\xi) + \int_0^1 s^{-m-1}S(\delta_s\xi)\,ds.
    \end{align*}
    The latter integral defines an approximately homogeneous function of $\xi.$

    Assume for the sake of definiteness that $\phi$ is supported in $(-1,1),$ the general case follows from rescaling. For $|\xi|>1,$ we have
    \[
        \int_0^{|\xi|}s^{-1}(1-\phi(s))\log(s)^j\,ds = \int_0^1 s^{-1}(1-\phi(s))\log(s)^j\,ds + \int_1^{|\xi|} s^{-1}\log(s)^j\,ds
    \]
    The first integral above is a constant, the second is $\frac{1}{j+1}\log(|\xi|)^{j+1}.$ Hence $f(\xi)$ has the required form for $|\xi|>1,$ and since $f$ is smooth at zero, the result follows by a truncation.
\end{proof}

\begin{definition}
    Let $k\geq 0.$ Following Lesch \cite{Lesch1999}, say that a smooth function $f$ is $k$-log-polyhomogeneous of order $m\in \Cplx$ if $f$ admits an asymptotic expansion as $|\xi|\to\infty$
    \[
        f(\xi) \sim \sum_{\ell=0}^{k} \sum_{j=0}^{\infty} \log(|\xi|)^{\ell}f_{j,\ell}(\xi).
    \]
    where $f_{j,\ell}$ is homogeneous of order $m-j.$
\end{definition}
Note that this definition involves a choice of norm $|\cdot|;$ but ultimately we will see that the class of functions is independent of this choice.

The following is analogous to Lemma \ref{integral_representation_general}.\eqref{kth_order_integral_representation}, and the proof is the same.
\begin{lemma}
    Let $\Re(m)<0.$ If $f \in C^\infty(\Rl^N),$ and
    \[
         (Z-m)^{-k-1}f \in C^\infty(\Rl^N)
    \]
    then
    \[
        (Z-m)^{-k-1}f = \frac{1}{k!}\int_0^1 t^{-m-1}(-\log t)^{k}f\circ \delta_t \,dt.
    \]
\end{lemma}

Let $\widetilde{\delta}_t$ be the dilation on $\Rl^N\times \Rl$ defined by
\[
    \widetilde{\delta}_t(\xi,h) = (\delta_t\xi,th).
\]
Let $\widetilde{Z}$ be the vector field generating $\widetilde{\delta}_t.$
\begin{theorem}\label{logpolyhomogeneous_characterisation}
    A smooth function $f$ on $\Rl^N$ is $k$-log-polyhomogeneous of order $m$ if and only if there exists a smooth function $\widetilde{f}$ on $\Rl^{N}\times \Rl$ such that $f(\xi)=\widetilde{f}(\xi,1)$ and $\widetilde{f}$ is approximately $k$-log-homogeneous of order $m$ on $\Rl^{N+1}$ with respect to $\widetilde{\delta}.$ That is,
    \[
        (\widetilde{Z}-m)^{k+1}\widetilde{f} \in \Sc(\Rl^{N+1}).
    \]
\end{theorem}
\begin{proof}
    Suppose that there exists $\widetilde{f}$ which is approximately $k$-log-homogeneous of order $m,$ and $f(\xi) = \widetilde{f}(\xi,1).$ By Lemma \ref{taylor_logarithmic}, we have
    \[
        \widetilde{f}(\xi,h) = \sum_{j=0}^k (1-\phi(|\xi|+h))\log(|\xi|+|h|)^jH_j(\xi,h)+S(\xi,h)
    \]
    for some homogeneous functions $H_0,\ldots,H_k$ and a Schwartz function $S.$ Assume that $\phi$ is supported in $(-1,1),$ so that
    \[
        \widetilde{f}(\xi,1) = \sum_{j=0}^k \log(|\xi|+1)^jH_j(\xi,1)+S(\xi,1).
    \]
    The functions $\xi\mapsto H_j(\xi,1)$ are polyhomogeneous of order $m,$ while we also have
    \[
        \log(|\xi|+1)-\log(|\xi|) \sim \sum_{k=1}^\infty (-1)^{k-1}\frac{1}{k}|\xi|^{-k}
    \]
    in the sense of an asymptotic expansion at $\infty.$ This proves that $\widetilde{f}(\xi,1)$ is $k$-log-polyhomogeneous.

    Conversely, let $f$ be a $k$-log-polyhomogeneous function given by the asymptotic sum
    \[
        f(\xi) \sim \sum_{\ell=0}^k \sum_{j=0}^\infty\log(|\xi|)^\ell a_{j,\ell}(\xi)
    \]
    where $a_{j,\ell}$ is homogeneous of degree $m-j.$ It follows from this form that, up to a truncation at zero, there exist polyhomogeneous functions $f_{\ell}$ such that
    \[
        f(\xi) = \sum_{\ell=0}^k \log(|\xi|)^\ell f_{\ell}(\xi).
    \]
    Hence there exist approximately homogeneous functions $\widetilde{f}_{\ell}$ on $\Rl^{N+1}$ for which $f_{\ell}(\xi) = \widetilde{f}_{\ell}(\xi,1).$ Let
    \[
        \widetilde{f}(\xi,h) = \sum_{\ell=0}^k \log(|\xi|)^{\ell}\widetilde{f}_{\ell}(\xi,h).
    \]
    It now suffices to compute $(\widetilde{Z}-m)^{k+1}\widetilde{f}$ using the Leibniz rule.
\end{proof}

\section{$\log^k$-modulated operators}\label{modulation_appendix}
In this section, we use a modification of the theory of modulated operators described in \cite[Section 7.3]{LordSukochevZanin2021}. See also the very similar theory in \cite{Goffeng-Usachev-jmaa-2020}, which is comparable but has slightly different aims.
Recall that $\Hc$ denotes a Hilbert space, and $\Bc(H)$ is the space of bounded operators on $\Hc.$
\begin{definition}
    Let $A\in \Bc(H).$ Let $0\leq V \in \Lc_{1,\infty}(\Hc)$ have trivial kernel. Say that $A$ is weakly $\log^k$-$V$-modulated if there exists $p>0$ such that
    \[
        AV^{-1/p} \in \Ec_{p,k}.
    \]
\end{definition}
Note that $V^{-1/p}$ is unbounded. When we write $AV^{-1/p}\in \Ec_{p,k},$ what we literally mean is that there exists $B\in \Ec_{p,k}$ such that $A=BV^{1/p}.$

\begin{remark}\label{modulated_implies_weak_type}
Since $V^{1/p}\in \Lc_{p,\infty}=\Ec_{p,0},$ if $A$ is weakly $\log^k$-$V$-modulated then $A\in \Ec_{1,k},$ by \eqref{holder_type}.
\end{remark}

We will not actually need the notion, but for the sake of direct comparison of what we do here with the theory in \cite[Chapter 7]{LordSukochevZanin2021}, we also define strong $\log^k$-$V$-modulation:
\begin{definition}
    Let $A \in \Bc(\Hc),$ and let $0\leq V \in \Lc_{1,\infty}(\Hc).$ Say that $A$ is strongly $\log^k$-$V$-modulated if
    \[
        \limsup_{t\to 0} t^{1/2}(-\log t)^{-k}\|A(1+tV)^{-1}\|_{\Lc_2} < \infty.
    \]
\end{definition}
The strongly $\log^0$-$V$-modulated operators are precisely the $V$-modulated operators from \cite[Definition 7.1.2]{LordSukochevZanin2021}.

\begin{lemma}\label{take_the_inverse}
    Let $0\leq V \in \Lc_{1,\infty}.$ If $A$ is strongly $\log^k$-$V$-modulated, then $A$ is weakly $\log^k$-$V$-modulated.
\end{lemma}
\begin{proof}
    For any operator $T$ and any operator $R$ of rank less than $n,$ we have
    \begin{align*}
        \mu(2n,T) &\leq \mu(n,T-R)\\
                  &\leq n^{-1/2}\|T-R\|_{\Lc_2}.
    \end{align*}
    Let $p>0,$ and take $T=AV^{-1/p},$ $R = T\chi_{(\mu(n,T),\infty)}(V),$ we have
    \[
        \mu(2n,AV^{-1/p}) \leq n^{-1/2}\|AV^{-1/p}\chi_{[0,\mu(n,V)]}(V)\|_{\Lc_2}\leq n^{-1/2}\|AV^{-1/p}\chi_{[0,Cn^{-1}]}(V)\|_{\Lc_2}.
    \]
    Let $2^{\ell} \leq n< 2^{\ell+1}.$ We have
    \begin{align*}
        \|AV^{-1/p}\chi_{[0,Cn^{-1}]}(V)\|_{\Lc_2}^2 &\leq \sum_{j=\ell}^{\infty} \|AV^{-1/p}\chi_{[C2^{-j-1},C2^{-j}]}(V)\|_{\Lc_2}^2\\
                                                     &\lesssim \sum_{j=\ell}^\infty 2^{2j/p}\cdot j^k 2^{-j} = \ell^{2k}2^{2\ell(\frac1p-\frac12)}\\
                                                     &\approx n^{\frac2p-1}\log(n)^{2k}.
    \end{align*}
    Therefore
    \[
        \frac{(n+1)^{1-\frac1p}}{\log(n+2)^k}\mu(2n,AV^{-1/p}) \leq \sup_{0<t<1/2} t^{1/2}(-\log t)^{-k}\|A(1+tV)^{-1}\|_{\Lc_2}.
    \]
\end{proof}
The proof of Lemma \ref{take_the_inverse} implies that $A$ is not only weakly $\log^k$-$V$-modulated, but that $AV^{-1/p}\in \Ec_{\frac{p}{p-1},k}$ for all $p>1.$

Recall that $\lambda(n,A)$ denotes the $(n+1)$th largest eigenvalue of a compact operator $A,$ arranged in non-increasing absolute value.

\begin{theorem}\label{diagonal_formula}
    Let $0\leq V \in \Lc_{1,\infty},$ and let $A$ be weakly $\log^k$-$V$-modulated. Let $\{e_n\}_{n=0}^\infty$ be an ordered eigenbasis for $V.$ We have
    \[
        \sum_{n=0}^N \langle e_n,\Re(A)e_n\rangle = \sum_{n=0}^{N} \lambda(n,\Re(A)) + O(\log(N)^k),\quad N\to\infty.
    \]
\end{theorem}
\begin{proof}
    Let $\{f_{n}\}_{n=0}^\infty$ be an eigenbasis for $\Re(A)$ such that
    \[
        \Re(A)f_n = \lambda(n,\Re(A))f_n,\quad n\geq 0.
    \]
    Let $p_N$ be the projection onto the span of $\{e_j\}_{n\leq N},$ and $q_N$ be the projection onto the span of $\{f_n\}_{n\leq N}.$ We need to show that
    \[
        |\Tr(\Re(A)(p_N-q_N))| = O(\log(N)^k).
    \]
    Let $r_N = p_N\vee q_N.$ We have
    \[
        |\Tr(A(r_N-q_N))| = |\Tr(A(1-q_n)r_N-q_N)| \leq \|A(1-q_N)\|_{\infty}\Tr(r_N-q_N) \leq (2N+2)\mu(N,A).
    \]
    By Remark \ref{modulated_implies_weak_type}, $A \in \Ec_{1,k}$ and hence
    \[
        |\Tr(A^*(r_N-q_N))| = |\Tr(A(r_N-q_N))| = O(\log(N)^k)
    \]
    Therefore
    \[
        |\Tr(\Re(A)(r_N-q_N))| = O(\log(N)^k).
    \]
    Next, let $p>1$ be such that $AV^{-1/p}\in \Ec_{\frac{p}{p-1},k}.$ We have
    \begin{align*}
        |\Tr(A(p_N-r_N))| &= |\Tr((r_N-p_N)A(p_N-r_N))|\\
                          &= |\Tr((r_N-p_N)AV^{-1/p}\cdot V^{1/p}(1-p_N))|\\
                          &\leq \|(r_N-p_N)AV^{-1/p}\|_1\mu(N,V)^{1/p}\\
                          &\lesssim\sum_{n\leq 2N+2} \mu(n,AV^{-1/p})\mu(N,V)^{1/p}\\
                          &\lesssim \log(N)^k.
    \end{align*}
    Also,
    \[
        |\Tr(A^*(p_N-r_N))| = |\Tr(A(p_N-r_N))| = O(\log(N)^k).
    \]
    Hence
    \[
        |\Tr(\Re(A)(p_N-r_N))| = O(\log(N)^k).
    \]
    Therefore
    \[
        |\Tr(\Re(A)(p_N-q_N))| \leq |\Tr(\Re(A)(p_N-r_N))| + |\Tr(\Re(A)(r_N-q_N))| = O(\log(N)^k).
    \]
\end{proof}

\begin{theorem}\label{diagonal_formula_nonselfadjoint}
    Let $0\leq V \in \Lc_{1,\infty},$ and let $A$ be weakly $\log^k$-$V$-modulated. Let $\{e_n\}_{n=0}^\infty$ be an ordered eigenbasis for $V.$ We have
    \[
        \sum_{n=0}^N \langle e_n,Ae_n\rangle = \sum_{n=0}^N \lambda(n,A)+O(\log(N)^k),\quad N\to\infty.
    \]
\end{theorem}
\begin{proof}
    By Theorem \ref{diagonal_formula} applied to $A$ and $iA,$ we have
    \[
        \sum_{n=0}^N \langle e_n,Ae_n\rangle = \sum_{n=0}^N \lambda(n,\Re(A))+i\lambda(n,\Im(A)) + O(\log(N)^k),\quad N\to\infty.
    \]
    By Theorem 5.1.5 of \cite{LordSukochevZanin2021}, since the ideal $\Ec_{1,k}$ is geometrically stable, we have
    \[
        \sum_{n=0}^N \lambda(n,A)-\lambda(n,\Re(A))-i\lambda(n,\Im(A)) = O(\log(N)^k).
    \]
\end{proof}

\begin{lemma}\label{zeta_function_lemma}
    Let $0\leq V \in \Lc_{1,\infty}$ and let $A$ be weakly $\log^k$-$V$-modulated. Assume in addition that
    \[
        \lim_{n\to\infty} (n+1)\mu(n,V)=1.
    \]
    If
    \[
        \lim_{s\to 0} s^{k+1}\Tr(AV^{s}) = c
    \]
    then as $N\to\infty,$ we have
    \[
        \sum_{n=0}^N \lambda(n,A) = \frac{1}{(k+1)!}c\log(N+2)^{k+1}+o(\log(N+2)^{k+1}).
    \]
    In particular, for any extended limit $\omega$ we have
    \[
        \Tr_{\omega,k}(A) = \frac{1}{(k+1)!}c.
    \]
\end{lemma}
\begin{proof}
    Let $\{e_n\}_{n=0}^\infty$ be an ordered eigenbasis for $V.$ We have
    \[
        \Tr(AV^s) = \sum_{n=0}^\infty \langle e_n,Ae_n\rangle \mu(n,V)^s = \int_0^{\infty} e^{-s\lambda}\,dS(\lambda)
    \]
    where
    \[
        S(\lambda) = \sum_{\mu(n,V)\geq e^{-\lambda}} \langle e_n,Ae_n\rangle.
    \]
    By the Hardy-Littlewood Tauberian theorem \cite[Theorem I.15.1]{Korevaar-tauberian-2004}, the assumption $\Tr(AV^s)\sim cs^{-k-1}$ implies
    \[
        S(\lambda) \sim \frac{1}{(k+1)!}\lambda^{k+1},\quad \lambda\to\infty.
    \]
    That is, as $\varepsilon \to 0,$ we have
    \[
        \sum_{\mu(n,V)\geq \varepsilon} \langle e_n,Ae_n\rangle = \frac{1}{(k+1)!}(-\log\varepsilon)^{k+1}c+o((-\log\varepsilon)^k).
    \]
    Let $\{N_{\ell}\}_{\ell=0}^\infty$ enumerate the jumps in $\mu(V),$ i.e. $N_0=0$ and $N_{\ell+1}$ is the first value of $n$ such that $\mu(n,V)<\mu(N_{\ell},V).$ We have proved
    \[
        \sum_{n=0}^{N_{\ell}-1} \langle e_n,Ae_n\rangle = \frac{1}{(k+1)!}(-\log\mu(N_\ell-1,V))^{k+1}c+o((-\log\mu(N_{\ell}-1,V)^{k+1})).
    \]
    Since $A$ is weakly $\log^k$-$V$-modulated, Theorem \ref{diagonal_formula_nonselfadjoint} gives
    \[
        \sum_{n=0}^{N_{\ell}-1} \lambda(n,A) = \frac{1}{(k+1)!}(-\log\mu(N_{\ell}-1,V))^{k+1}c+o((-\log\mu(N_{\ell}-1,V))^{k+1}).
    \]
    If $N_{\ell}\leq N < N_{\ell+1},$ we have
    \begin{align*}
        |\sum_{n=0}^{N}\lambda(n,A)-\sum_{n=0}^{N_{\ell}-1} \lambda(n,A)\rangle| &\leq \sum_{n=N_{\ell}}^{N_{\ell+1}-1} \mu(n,A)\\
        &= \sum_{n=N_{\ell}}^{N_{\ell+1}-1} \frac{\log(n)^k}{n}\\
        &\lesssim \log(N_{\ell+1})^{k+1}-\log(N_\ell)^{k+1}.
    \end{align*}
    Due to our assumption that $\lim_{n\to\infty} n\mu(n,V)=1,$ we have
    \[
        \lim_{\ell\to\infty} \frac{N_{\ell+1}}{N_\ell} = 1
    \]
    and therefore
    \[
        |\sum_{n=0}^{N}\lambda(n,A)-\sum_{n=0}^{N_{\ell}-1} \lambda(n,A)\rangle| \leq o(\log(N_{\ell+1})^{k+1}).
    \]
    Finally this means
    \[
        \sum_{n=0}^N \lambda(n,A) = \frac{1}{(k+1)!}(-\log\mu(N,V))^{k+1}c + o((-\log\mu(N,V))^{k+1})
    \]
    as $N\to\infty.$ Again using the assumption that $\mu(N,V)\sim N^{-1},$ we conclude that
    \[
        \sum_{n=0}^N \lambda(n,A) = \frac{1}{(k+1)!}\log(N)^kc+o(\log(N)^{k+1}).
    \]
    By Theorem \ref{spectrality}, this implies that $\Tr_{\omega,k}(T)=\frac{1}{(k+1)!}c$ for all extended limits $\omega.$
\end{proof}

The preceding theorem completes the proof of Theorem \ref{diagonal_dixmier}.


\begin{thebibliography}{FKFF24}

\bibitem[AS11]{AndroulidakisSkandalis2011}
I.~Androulidakis and G.~Skandalis.
\newblock Pseudodifferential calculus on a singular foliation.
\newblock {\em J. Noncommut. Geom.}, 5(1):125--152, 2011.

\bibitem[BL76]{Bergh-Lofstrom-1976}
J.~Bergh and J.~L\"ofstr\"om.
\newblock {\em Interpolation spaces. {A}n introduction}.
\newblock Springer-Verlag, Berlin-New York, 1976.
\newblock Grundlehren der Mathematischen Wissenschaften, No. 223.

\bibitem[BS70]{BirmanSolomyakWeaklyPolar1970}
M.~v. Birman and M.~Z. Solomjak.
\newblock Asymptotics of the spectrum of weakly polar integral operators.
\newblock {\em Izv. Akad. Nauk SSSR Ser. Mat.}, 34:1142--1158, 1970.

\bibitem[BS77]{BirmanSolomyakAnisotropic1977}
M.~v. Birman and M.~Z. Solomjak.
\newblock Asymptotic behavior of the spectrum of pseudodifferential operators
  with anisotropically homogeneous symbols.
\newblock {\em Vestnik Leningrad. Univ.}, (13):13--21, 169, 1977.

\bibitem[Con88]{Connes-Action}
A.~Connes.
\newblock The action functional in noncommutative geometry.
\newblock {\em Comm. Math. Phys.}, 117(4):673--683, 1988.

\bibitem[CP19]{ChoiPonge2018}
W.~Choi and R.~Ponge.
\newblock Tangent maps and tangent groupoid for {C}arnot manifolds.
\newblock {\em Differential Geom. Appl.}, 62:136--183, 2019.

\bibitem[CY24]{CouchetYuncken2024}
N.~Couchet and R.~Yuncken.
\newblock A groupoid approach to the {W}odzicki residue.
\newblock {\em J. Funct. Anal.}, 286(4):Paper No. 110268, 24, 2024.

\bibitem[DdPS24]{DdPS-vapour}
P.~G. Dodds, B.~de~Pagter, and F.~Sukochev.
\newblock {\em {Noncommutative Integration and Operator Theory.}}
\newblock Progress in Mathematics. Birkhäuser Cham, 2024.

\bibitem[DH20]{DaveHaller2020}
S.~Dave and S.~Haller.
\newblock The heat asymptotics on filtered manifolds.
\newblock {\em J. Geom. Anal.}, 30(1):337--389, 2020.

\bibitem[DS14]{DebordSkandalis2014}
C.~Debord and G.~Skandalis.
\newblock Adiabatic groupoid, crossed product by {$\Bbb{R}_+^\ast$} and
  pseudodifferential calculus.
\newblock {\em Adv. Math.}, 257:66--91, 2014.

\bibitem[FKFF24]{FFF2024arxiv}
C.~Fermanian-Kammerer, V.~Fischer, and S.~Flynn.
\newblock A microlocal calculus on filtered manifolds, 2024.

\bibitem[GK69]{GohbergKrein}
I.~C. Gohberg and M.~G. Kre\u{\i}n.
\newblock {\em Introduction to the theory of linear nonselfadjoint operators}.
\newblock Translated from the Russian by A. Feinstein. Translations of
  Mathematical Monographs, Vol. 18. American Mathematical Society, Providence,
  R.I., 1969.

\bibitem[Goo76]{Goodman-lnm-1976}
R.~W. Goodman.
\newblock {\em Nilpotent {L}ie groups: structure and applications to analysis},
  volume Vol. 562 of {\em Lecture Notes in Mathematics}.
\newblock Springer-Verlag, Berlin-New York, 1976.

\bibitem[GU20]{Goffeng-Usachev-jmaa-2020}
M.~Goffeng and A.~Usachev.
\newblock Dixmier traces and residues on weak operator ideals.
\newblock {\em J. Math. Anal. Appl.}, 488(2):124045, 2020.

\bibitem[HLP19]{HLP2018}
H.~Ha, G.~Lee, and R.~Ponge.
\newblock Pseudodifferential calculus on noncommutative tori, {I}.
  {O}scillating integrals.
\newblock {\em Internat. J. Math.}, 30(8):1950033, 74, 2019.

\bibitem[Ivr19]{Ivrii_microlocal_II}
V.~Ivrii.
\newblock {\em Microlocal analysis, sharp spectral asymptotics and
  applications. {II}}.
\newblock Springer, Cham, 2019.

\bibitem[KLPS13]{KLPS}
N.~Kalton, S.~Lord, D.~Potapov, and F.~Sukochev.
\newblock Traces of compact operators and the noncommutative residue.
\newblock {\em Adv. Math.}, 235:1--55, 2013.

\bibitem[Kor04]{Korevaar-tauberian-2004}
J.~Korevaar.
\newblock {\em Tauberian theory}, volume 329 of {\em Grundlehren der
  Mathematischen Wissenschaften [Fundamental Principles of Mathematical
  Sciences]}.
\newblock Springer-Verlag, Berlin, 2004.
\newblock A century of developments.

\bibitem[Les99]{Lesch1999}
M.~Lesch.
\newblock On the noncommutative residue for pseudodifferential operators with
  log-polyhomogeneous symbols.
\newblock {\em Ann. Global Anal. Geom.}, 17(2):151--187, 1999.

\bibitem[LMV17]{LescureManchonVassout2017}
J.-M. Lescure, D.~Manchon, and S.~Vassout.
\newblock About the convolution of distributions on groupoids.
\newblock {\em J. Noncommut. Geom.}, 11(2):757--789, 2017.

\bibitem[LSZ21]{LordSukochevZanin2021}
S.~Lord, F.~Sukochev, and D.~Zanin.
\newblock {\em Singular traces. Volume 1: Theory}.
\newblock De Gruyter Studies in Mathematics. De Gruyter, Berlin, Boston, 2021.

\bibitem[Mel82]{Melin-lie-filtrations-1982}
A.~Melin.
\newblock Lie filtrations and pseudo-differential operators.
\newblock 1982.
\newblock (preprint).

\bibitem[Moh24]{Mohsen2024residue}
O.~Mohsen.
\newblock A groupoid approach to the {W}odzicki residue and the
  {K}ontsevich-{V}ishik trace, 2024.
\newblock To appear in J. Noncomm. Geom.

\bibitem[Pon07]{Ponge2007}
R.~Ponge.
\newblock Noncommutative residue for {H}eisenberg manifolds. {A}pplications in
  {CR} and contact geometry.
\newblock {\em J. Funct. Anal.}, 252(2):399--463, 2007.

\bibitem[Pon23]{PongeJNCG2023}
R.~Ponge.
\newblock Connes' integration and {W}eyl's laws.
\newblock {\em J. Noncommut. Geom.}, 17(2):719--767, 2023.

\bibitem[Sch73]{Schwartz1973}
L.~Schwartz.
\newblock {\em Radon measures on arbitrary topological spaces and cylindrical
  measures}, volume No. 6 of {\em Tata Institute of Fundamental Research
  Studies in Mathematics}.
\newblock Tata Institute of Fundamental Research, Bombay; by Oxford University
  Press, London, 1973.

\bibitem[Str14]{Street-singular-integrals-2014}
B.~Street.
\newblock {\em Multi-parameter singular integrals}, volume 189 of {\em Annals
  of Mathematics Studies}.
\newblock Princeton University Press, Princeton, NJ, 2014.

\bibitem[SZ14]{SZ-AiM}
F.~Sukochev and D.~Zanin.
\newblock Which traces are spectral?
\newblock {\em Adv. Math.}, 252:406--428, 2014.

\bibitem[Tay84]{Taylor1984}
M.~E. Taylor.
\newblock Noncommutative microlocal analysis. {I}.
\newblock {\em Mem. Amer. Math. Soc.}, 52(313):iv+182, 1984.

\bibitem[Tho75]{ThomasTAMS75}
G.~E.~F. Thomas.
\newblock Integration of functions with values in locally convex {S}uslin
  spaces.
\newblock {\em Trans. Amer. Math. Soc.}, 212:61--81, 1975.

\bibitem[Tr{\`{e}}67]{Treves1967}
F.~Tr{\`{e}}ves.
\newblock {\em Topological vector spaces, distributions and kernels}.
\newblock Academic Press, New York-London, 1967.

\bibitem[vEY17]{vanErpYuncken2017}
E.~van Erp and R.~Yuncken.
\newblock On the tangent groupoid of a filtered manifold.
\newblock {\em Bull. Lond. Math. Soc.}, 49(6):1000--1012, 2017.

\bibitem[vEY19]{vanErpYuncken2019}
E.~van Erp and R.~Yuncken.
\newblock A groupoid approach to pseudodifferential calculi.
\newblock {\em J. Reine Angew. Math.}, 756:151--182, 2019.

\bibitem[Wod87]{WodzickiLNM1987}
M.~Wodzicki.
\newblock Noncommutative residue. {I}. {F}undamentals.
\newblock In {\em {$K$}-theory, arithmetic and geometry ({M}oscow,
  1984--1986)}, volume 1289 of {\em Lecture Notes in Math.}, pages 320--399.
  Springer, Berlin, 1987.

\end{thebibliography}
\end{document}